\documentclass[10pt]{article}
\usepackage[utf8]{inputenc}

\usepackage{a4wide,color,eucal,mathrsfs,dsfont}
\usepackage{amsmath,amsthm,amssymb,epsfig,bbm,graphicx}
\usepackage{parskip}
\usepackage{xcolor}
\usepackage{pdfsync}
\usepackage{times}
\usepackage{mathtools}

\mathchardef\emptyset="001F

\usepackage[normalem]{ulem}
\numberwithin{equation}{section}

\usepackage[colorlinks=true]{hyperref}
\hypersetup{linkcolor=blue, urlcolor=blue, citecolor=red}

\usepackage[shortlabels]{enumitem}
\setlist[enumerate]{font={\upshape}}
\setlist[enumerate,1]{label={(\alph*)}}

\usepackage[a4paper,top=3cm,bottom=2cm,left=3cm,right=3cm,marginparwidth=1.75cm]{geometry}

\usepackage[english]{babel,varioref}
\usepackage[T1]{fontenc}
\usepackage{url}
\usepackage{setspace}
\usepackage{verbatim}

\usepackage{bm}
\usepackage{bbm}



\usepackage{listings}

\newtheorem{thm}{Theorem}[section]
\newtheorem{theorem}[thm]{Theorem}
\newtheorem*{theoremstar}{Theorem}
\newtheorem{prop}[thm]{Proposition}

\newtheorem{lem}[thm]{Lemma}
\newtheorem{lemma}[thm]{Lemma}

\newtheorem{corollary}[thm]{Corollary}

\theoremstyle{definition}
    
\newtheorem{remark}[thm]{Remark}
\newtheorem{defn}[thm]{Definition}
\newtheorem{definition}[thm]{Definition}

\renewcommand{\H}{\mathbb{H}}

\newcommand{\N}{\mathbb{N}}

\newcommand{\R}{\mathbb{R}}
\newcommand{\X}{\mathbb{X}}
\newcommand{\Y}{\mathbb{Y}}


\newcommand{\cP}{{\ensuremath{\mathcal P}}}
\newcommand{\cT}{{\ensuremath{\mathcal T}}}


\newcommand{\rr}{{\boldsymbol{r}}}







\newcommand{\ggamma}{{\boldsymbol{\gamma}}}

\newcommand{\mmu}{{\boldsymbol{\mu}}}

\newcommand{\ssigma}{{\boldsymbol{\sigma}}}




\newcommand{\sfd}{{\mathsf d}}
\newcommand{\sfe}{{\mathsf e}}

\newcommand{\sfZ}{{\mathsf Z}}

\newcommand{\rmC}{{\mathrm C}}

\newcommand{\Kliminf}{K\kern-3pt-\kern-2pt\mathop{\rm lim\,inf}\limits}  
\newcommand{\supp}{\mathop{\rm supp}\nolimits}   
\renewcommand{\d}{{\mathrm d}}

\newcommand{\restr}[1]{\lower3pt\hbox{$|_{#1}$}}

\newcommand{\la}{{\langle}}                  
\newcommand{\ra}{{\rangle}}
\newcommand{\up}{\uparrow}
\newcommand{\eps}{\varepsilon}  
\newcommand{\nchi}{{\raise.3ex\hbox{$\chi$}}}
\newcommand{\weakto}{\rightharpoonup}
%



%

%

\setcounter{tocdepth}{2}

\newcommand{\GX}{X}
\newcommand{\PX}{{\X\times \Y}}
\newcommand{\PXtop}{{\X_s\times \Y_{\kern-1pt w}}} 
\newcommand{\PXname}{{\mathsf Z}}
\newcommand{\PXsw}{{\mathsf Z_{sw^{\kern-1pt*}}}}
\renewcommand{\X}{\mathsf X}
\renewcommand{\H}{\mathsf H}
\renewcommand{\Y}{\mathsf Y}
\newcommand{\id}{\mathrm{id}}
\newcommand{\wcP}[3]{\cP_{#1#2}^{sw}(#3)}
\newcommand{\testsw}[3]{\rmC^{sw}_{#1#2}(#3)}
\newcommand{\cPtHw}{\cP_2^w(\H)}

\newcommand{\Pcal}{\mathcal{P}}

\newcommand{\Fix}{\textnormal{Fix}}
\newcommand{\Pdue}{\mathcal{P}_2(X)}

\renewcommand{\d}{\mathrm d}

\DeclareMathOperator*{\argmin}{arg\,min}


\title{Weak topology and Opial property
  in Wasserstein spaces, with
  applications to Gradient Flows
  and Proximal Point Algorithms of
  geodesically convex functionals}
\author{Emanuele Naldi 
  \thanks{Universit\"atsplatz 2, 38106 Braunschweig, Germany.
    Email: \texttt{e.naldi@tu-braunschweig.de}}\\
  {\small \em Institut f\"ur Analysis und Algebra, TU Braunschweig.}
  \and
  Giuseppe Savar\'e
  \thanks{Via Roentgen 1, 20136 Milan, Italy.
    Email: \texttt{giuseppe.savare@unibocconi.it}}\\
  {\small \em Department of Decision Sciences,
  Bocconi University.}
}

\date{\small Dedicated to the memory of Claudio Baiocchi, outstanding
  mathematician and beloved mentor}

\begin{document}

\maketitle

\begin{abstract}
  In this paper we discuss how to define
  an appropriate notion of
  \emph{weak topology} in the Wasserstein space
  $(\cP_2(\H),W_2)$ of Borel probability measures
  with finite quadratic
  moment on a separable Hilbert space $\H$.
  
  We will show that such a topology inherits
  many features of the usual weak topology in Hilbert spaces, in
  particular the weak closedness of
  geodesically convex closed sets and
  the Opial property characterising
  weakly convergent sequences.
  
  We apply this notion to the approximation of fixed points for a
  non-expansive map in a weakly closed subset of $\cP_2(\H)$ and
  of minimizers of a lower semicontinuous and 
  geodesically convex functional $\phi:\cP_2(\H)\to(-\infty,+\infty]$
  attaining its minimum.
  In particular, we will show
  that every solution to the
  Wasserstein
  gradient
  flow of $\phi$
  weakly converge to a minimizer of $\phi$
  as the time goes to $+\infty$.
  Similarly, if $\phi$ is also convex along generalized geodesics,
  every sequence generated by the proximal point algorithm converges
  to
  a minimizer of $\phi$ with respect to the weak topology of
  $\cP_2(\H)$. 
\end{abstract}

\section{Introduction}

Opial proved in \cite{Opial67} that
weak convergence in a separable Hilbert space
$(\H,|\cdot|)$
admits a nice metric characterization.
\begin{theoremstar}[Opial]
  Let 
  $(x_n)_{n\in\mathbb{N}}$ be a sequence in $\H$
  weakly converging to $x \in \H$.
  Then
  \begin{equation}
    \label{eq:29}
    |y-x|^2+\liminf_{n\to\infty}|x_n-x|^2\le
    \liminf_{n\to\infty}|x_n-y|^2.
  \end{equation}
  In particular, for every $y\neq x$
  \begin{equation}\label{eq:1}
    \liminf_{n\to\infty}|x_n-x|<\liminf_{n\to\infty}|x_n-y|.
  \end{equation}
  \end{theoremstar}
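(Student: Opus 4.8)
The plan is to reduce everything to the elementary Hilbertian identity obtained by expanding the squared norm around the weak limit $x$. First I would write, for every fixed $y\in\H$ and every $n$,
\begin{equation*}
  |x_n-y|^2=|x_n-x|^2+2\la x_n-x,\,x-y\ra+|x-y|^2 .
\end{equation*}
The whole argument rests on this single decomposition: the first term carries the (unknown) asymptotic behaviour of the sequence, the last is a constant independent of $n$, and the cross term is affine in $x_n$.

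Next I would exploit the weak convergence. Since $x_n\weakto x$, testing against the fixed vector $x-y\in\H$ gives $\la x_n-x,\,x-y\ra\to 0$. Hence the sequence $c_n:=2\la x_n-x,\,x-y\ra+|x-y|^2$ converges to $|x-y|^2$, while $a_n:=|x_n-x|^2$ is precisely the term whose $\liminf$ we wish to isolate. Using that $\liminf_n(a_n+c_n)=\liminf_n a_n+\lim_n c_n$ whenever $(c_n)$ is convergent, I obtain
\begin{equation*}
  \liminf_{n\to\infty}|x_n-y|^2=|x-y|^2+\liminf_{n\to\infty}|x_n-x|^2 ,
\end{equation*}
which is in fact the equality version of \eqref{eq:29}; the stated inequality follows a fortiori. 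It is worth noting that in the Hilbertian setting one gets equality, whereas only the inequality is expected to survive in the Wasserstein framework the paper is heading towards.

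For the strict inequality \eqref{eq:1} I would first recall that a weakly convergent sequence is norm-bounded (Banach--Steinhaus), so that $\liminf_{n}|x_n-x|^2<+\infty$. When $y\neq x$ the constant $|x-y|^2$ is strictly positive, and combining it with this finiteness upgrades the displayed equality to $\liminf_{n}|x_n-x|^2<\liminf_{n}|x_n-y|^2$. Taking square roots, and using that $t\mapsto\sqrt t$ is continuous and increasing so that $\sqrt{\liminf_n b_n}=\liminf_n\sqrt{b_n}$ on nonnegative sequences, then yields \eqref{eq:1}.

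The computation is short, and the two places where care is genuinely needed — which is where I locate the (modest) main obstacle — are exactly the two points at which the hypotheses enter: the vanishing of the cross term, which uses only weak and not strong convergence, and the finiteness of $\liminf_n|x_n-x|^2$, which is what permits the cancellation passing from the non-strict to the strict inequality and rules out an indeterminate $\infty-\infty$.
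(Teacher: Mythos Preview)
Your proof is correct and follows exactly the same route as the paper: expand $|x_n-y|^2$ via the polarization identity \eqref{eq:30}, use weak convergence to kill the cross term, and pass to the $\liminf$. You have simply fleshed out the details (the $\liminf$ calculus, the Banach--Steinhaus argument for finiteness, and the passage to square roots for \eqref{eq:1}) that the paper leaves implicit.
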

  The \emph{proof} can be easily obtained by passing to the limit
  in the identity
  \begin{equation}
    \label{eq:30}
    |x_n-y|^2=|x_n-x|^2+|x-y|^2+2\langle x_n-x,x-y\rangle,
  \end{equation}
observing that
$\lim_{n\to\infty}\langle x_n-x,x-y\rangle=0$
by weak convergence.
It is worth noticing that \eqref{eq:1} shows that
the weak limit $x$ of a sequence $(x_n)_{n\in \N}$
is the unique strict minimizer of the function
\begin{equation}
  \label{eq:31}
  L(y):=\liminf_{n\to\infty}|x_n-y|.
\end{equation}
Opial property (further extended and studied in more general Banach
spaces, see e.g.~\cite{Prus92})
has many interesting applications. A first
one, which already appears as a relevant motivation in Opial's paper
\cite{Opial67}, is related to the approximation of
a fixed point of a non-expansive map $T:C\to C$
defined in a closed and convex subset of $\H$.
If the set of fixed points of $T$ is not empty
and $\lim_{n\to\infty} |T^{n+1} x-T^n x|=0$
for some $x\in C$, 
then the sequence of iterated maps
$(T^n x)_{n\in \N}$ weakly converges to
a fixed point $y\in C$ of $T$ as $n\to\infty$.

A second kind of applications concerns
the dynamic approximation of minimizers
of a convex and lower semicontinuous function
$\varphi:\H\to(-\infty,+\infty]$ as the asymptotic limit
of its gradient flow or of the
so called Proximal Point Algorithm.

More precisely, if $\argmin \varphi\neq\emptyset$, Bruck
\cite{Bruck75} proved that every locally Lipschitz curve 
$x:(0,+\infty)\to\H$ solving
the differential inclusion
(the gradient flow generated by $\varphi$)
\begin{equation}
  \label{eq:48}
  \frac\d{\d t}x(t)\in -\partial\varphi(x(t))\quad\text{a.e.~in $(0,\infty)$},
\end{equation}
weakly converges to a minimizer
$x_\infty$ of $\varphi$ as $t\up\infty$.
An analogous asymptotic behaviour is exhibited by the
solutions to the Proximal
Point Algorithm: selecting an initial datum $x_0\in \H$ and a time
step $\tau>0$, one considers
the sequence
$(x^k_\tau)_{k\in \N}$ which recursively solves the variational problem
\begin{equation}
  \label{eq:51}
  x^k_\tau\quad\text{minimizes}\quad
  y\mapsto \frac 1{2\tau}|y-x^{k-1}_\tau|^2+\varphi(y).
\end{equation}
A result of Martinet \cite{Martinet70,Martinet72}
(see also Rockafellar \cite{Rockafellar76})
shows that the sequence $(x^k_\tau)_{k\in
  \N}$ 
weakly converges to an element $x_\infty$
of $\argmin\varphi$.

The aim of the present paper is to study the extension of the Opial Lemma to the
metric space $(\cP_2(\H),W_2)$
of Borel probability measures 
on $\H$ endowed with the
Kantorovich-Rubinstein-Wasserstein distance $W_2$
and
to derive similar applications to fixed points and to convergence of
gradient flows
and proximal point algorithms.

Let us recall that
a Borel probability measure $\mu$ in $\H$ belongs to $\cP_2(\H)$ if
\begin{equation}
  \label{eq:52}
  \text{the quadratic moment }\int_\H |x|^2\,\d\mu(x)\quad\text{is finite}.
\end{equation}
The squared Wasserstein distance between $\mu_1,\mu_2\in \cP_2(\H)$
can then be defined as the solution of the Optimal Problem with
quadratic cost
\begin{equation}
  \label{eq:53}
  W_2^2(\mu_1,\mu_2):=
  \min\Big\{\int_{\H\times \H} |x_1-x_2|^2\,\d\mmu(x_1,x_2):
  \mmu\in \Gamma(\mu_1,\mu_2)\Big\},
\end{equation}
where $\Gamma(\mu_1,\mu_2)$ denotes the set of couplings between
$\mu_1$ and $\mu_2$, i.e.~the Borel probability measures
in $\H\times \H$ whose marginals are $\mu_1$ and $\mu_2$ respectively.
It turns out that $(\cP_2(\H),W_2)$ is a complete and separable metric
space, which contains an isometric copy of $\H$
given by the Dirac masses $\{\delta_x:x\in \H\}$
(see e.g.~\cite{Villani09,AGS08,Santambrogio15}).

Since the distance in $\cP_2(\H)$ 
cannot be derived by a norm, 
a first natural question concerns the appropriate definition
of a suitable weak topology in $\cP_2(\H)$, which enjoys
at least some of the most useful properties of weak convergence in
Hilbert spaces:
\begin{enumerate}[(a)]
\item bounded sequences admit weakly convergent subsequences,
\item the distance function from a given
  element is a weakly lower semicontinuous map,
\item the scalar product is sequentially continuous
  w.r.t.~strong/weak convergence of their factors,
\item weakly convergent sequences are bounded,
\item strongly closed convex sets are also weakly closed.
\end{enumerate}
A first approach, adopted in \cite{AGS08},
is to work with the Wasserstein distance
induced by a weaker metric on $\H$,
which metrizes the weak topology on bounded sets.
This provides a satisfactory answer to the first three
questions (a,b,c), at least for bounded sequences.

Here we rely on a different point of view, recalling that
the topology of $\cP_2(\H)$
could be equivalently characterized as the
initial topology induced by the family of real functions
$F_\zeta:\cP_2(\H)\to\R$ where
\begin{equation}
  \label{eq:54}
  F_\zeta:\mu\to \int_\H \zeta\,\d\mu,\quad
  \zeta\in \mathrm C(\H),\
  \sup_{x\in \H}\frac{\zeta(x)}{1+|x|^2}<\infty,
\end{equation}
i.e.~the coarsest topology such that makes
all the functions $F_\zeta$ in \eqref{eq:54} continuous.
We thus define the weak topology in $\cP_2(\H)$
as the initial topology $\sigma(\cP_2(\H),\rmC_2^w(\H))$ induced by
$F_\zeta$ as $\zeta$ varies in the set
\begin{equation}
  \label{eq:54bis}
  \rmC_2^w(\H):=\Big\{
  \zeta:\H\to\R\text{ is sequentially weakly continuous},
  \quad
  \lim_{|x|\to\infty} \frac{\zeta(x)}{1+|x|^2}=0\Big\},
\end{equation}
and we call $\cPtHw$ the corresponding topological space
$\big(\cP_2(\H), \sigma(\cP_2(\H),\rmC_2^w(\H))\big)$.
In this way, $\cPtHw$ inherits the weak$^*$ topology of a subset
of the dual of the Banach space $\rmC_2^w(\H)$ 
and we will show that it satisfies all the previous properties (a,\ldots,e).
In particular, we will prove
that every lower semicontinuous
geodesically convex function
$\phi:\cP_2(\H)\to(-\infty,+\infty]$ is also
sequentially lower semicontinuous w.r.t.~the
weak topology $\sigma(\cP_2(\H),\rmC_2^w(\H))$. As a byproduct,
for every $\mu_0\in \cP_2(\H)$ and $\tau>0$
the Proximal Point Algorithm in $\cP_2(\H)$
(also known as JKO \cite{JKO98} or
Minimizing Movement scheme \cite{AGS08})
\begin{equation}
  \label{eq:51bis}
  \mu^k_\tau\quad\text{minimizes}\quad
  \mu\mapsto \frac 1{2\tau}
  W_2^2(\mu,\mu^{k-1}_\tau)+\phi(\mu)
\end{equation}
has always a solution $(\mu^k_\tau)_{k\in \N}$.

We will then show that
the Opial property holds in $\cPtHw$,
with the same structure of
\eqref{eq:29}:
if $(\mu_n)_{n\in \N}$ is a sequence converging to
$\mu$ in $\cPtHw$ then
\begin{equation}
  \label{eq:55}
  W_2^2(\nu,\mu)+\liminf_{n\to\infty}
  W_2^2(\mu,\mu_n)\le
  \liminf_{n\to\infty}
  W_2^2(\nu,\mu_n)
  \quad\text{for every }\nu\in \cP_2(\H).
\end{equation}
Applications to the asymptotic
convergence of the gradient flows of a lower semicontinuous and
geodesically convex functional $\phi:\cP_2(\H)\to(-\infty,+\infty]$
can then be easily derived by the same strategy of \cite{Bruck75},
by using the metric characterization of a solution
$\mu:(0,\infty)\to D(\phi)$ of the gradient flow of $\phi$ in
$\cP_2(\H)$ 
in terms of Evolution Variational Inequalities
\cite{AGS08} (see \cite{Baiocchi89} for such a metric approach
to \eqref{eq:48} in
Hilbert spaces)
\begin{equation}
  \label{eq:63intro}
  \frac{1}{2}\frac{\d}{\d
    t}W_2^2(\mu_t,\sigma)\leq\phi(\sigma)-\phi(\mu_t)
  \quad\text{$\mathscr L^1$-a.e.~in $(0,\infty)$,\quad
    for every }\sigma\in D(\phi).
  \tag{EVI}
\end{equation}
Analogous results hold for the convergence of the Proximal Point Algorithm
(here we use the discrete estimates of \cite{AGS08} for
\eqref{eq:51bis} assuming convexity along generalized geodesics)
and for the approximation of the fixed point of a non-expansive and
asymptotically regular map $T$ defined in a weakly closed subset
of $\cP_2(\H)$.

\paragraph{\em Plan of the paper.}
We will collect in Section \ref{sec:preliminaries}
the main facts concerning optimal transport and 
Kantorovich-Rubinstein-Wasserstein distances;
we adopt a general topological framework, in order to include
convergence of Borel probability measures
with respect to non metrizable topologies as the weak topology
in a Hilbert space.

Section \ref{sec:sw} is devoted to the definition of
the weak topology of $\cPtHw$.
Here we adopt a more general viewpoint, considering probability
measures in product spaces $\mathsf Z=\mathsf X_s\times \mathsf Y_w$
where $\X_s$ is a Banach space with its strong topology and
$\mathsf Y_w$ is a reflexive Banach space endowed with its weak
topology under the general $p$-$q$ growth condition
\begin{displaymath}
  \int_{\X\times \Y}\Big(\|x\|_\X^p+\|y\|_\Y^q\Big)\,\d\mmu(x,y)<\infty
\end{displaymath}
and leading to the space $\cP_{pq}^{sw}(\X\times \Y)$.
This is quite useful to deal with
the integration of bilinear forms;
when $q=p=2$ and $\X=\Y=\H$ is an Hilbert space,
we recover the case which is particularly relevant for our further
applications.
The weak topology of $\cPtHw$ corresponds to the choice $\X=\{0\}$ and
$\Y=\H$.
As a byproduct, we will prove
the stability of optimal couplings w.r.t.~the convergence in
$\cP_{22}^{sw}(\H\times \H)$.

Section \ref{sec:wlsc} deals with the weak lower semicontinuity
in $\cPtHw$ of geodesically convex functionals.
The Opial property in $\cPtHw$ is discussed in Section \ref{sec:Opial}.

The last Section \ref{sec:applications}
contains the applications to the asymptotic behaviour of
gradient flows, of the Proximal Point Algorithm, and of
the iteration of non-expansive and asymptotically regular maps
in $\cP_2(\H)$.

\paragraph{\em Acknowledgments.}
G.S.~gratefully acknowledges the support of the Institute of Advanced
Study of the Technical University of Munich, of the MIUR-PRIN 2017
project
\emph{''Gradient flows, Optimal Transport and Metric Measure Structures''}
and of the IMATI-CNR, Pavia.
The authors also acknowledge the support of the Department of Mathematics ``F. Casorati" of the University of Pavia  during the preparation of a preliminary version of the paper.\\
E.N.~acknowledges that this project has received funding from the European Union’s Framework Programme for Research and Innovation Horizon 2020 (2014-2020) under the Marie Skłodowska-Curie Grant Agreement No. 861137
\includegraphics[width=.5cm, height=.33cm]{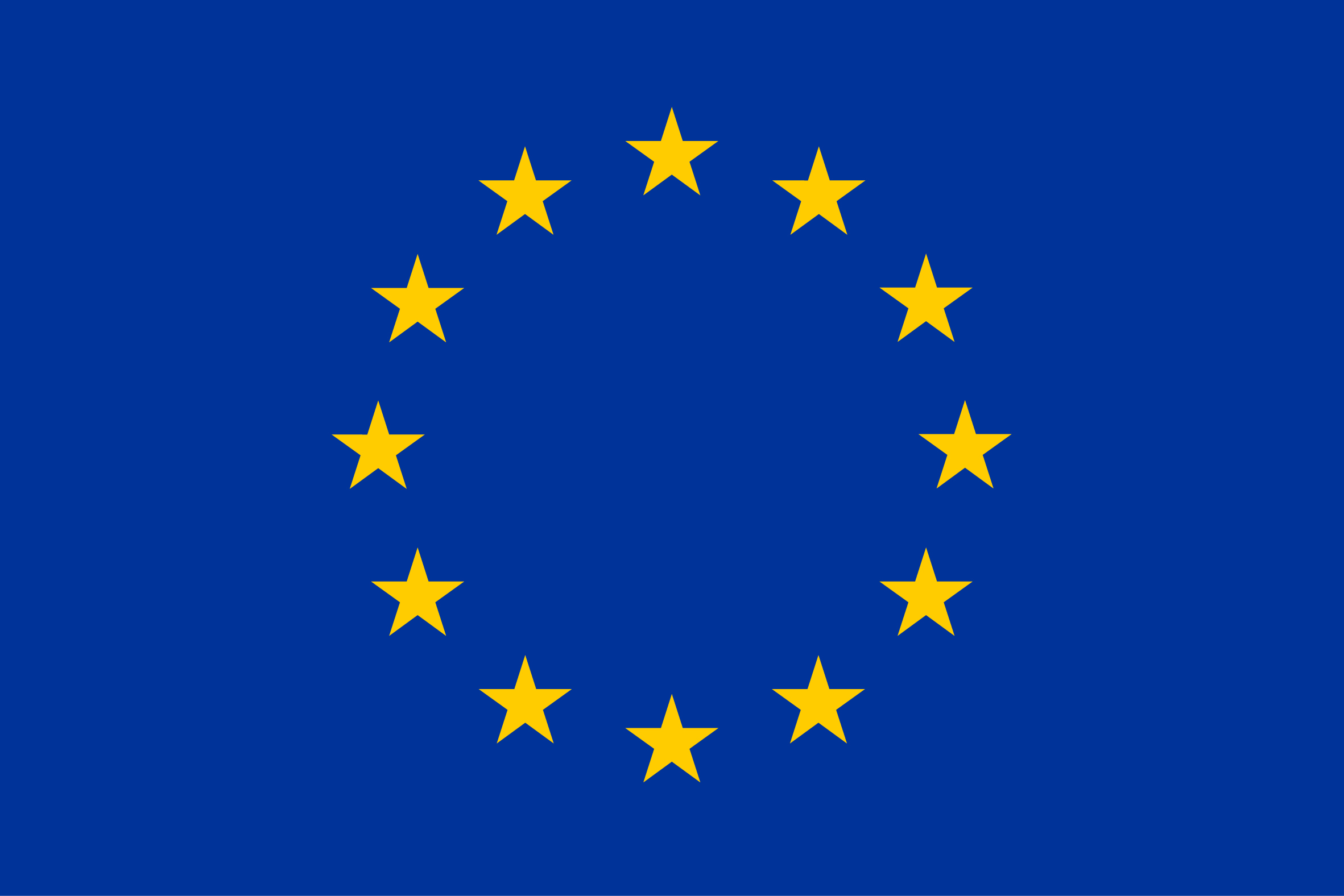}
\\
G.S.~graduated and began his research under the scientific direction of Claudio Baiocchi, to whom he is deeply
grateful and indebted for the generous and enlightening guidance, and his gentle humanity. Thanks to the
originality of his ideas, the depth of his vision, the superb clarity and broad scope of his teaching, and his
encouragement towards new and stimulating problems, Claudio Baiocchi had a fundamental influence on
the development of the "Pavia School". To remain in the context of the present paper, here we will limit ourselves
to quote his deep contribution to the study of evolution variational inequalities, in particular \cite{Baiocchi89}, which
had a profound impact on numerous developments and later on the metric theory of gradient flows.\\

\section{Preliminaries}
\label{sec:preliminaries}

\let\epsilon\eps
\let\smallsetminus\setminus
\subsection{Radon measures in completely regular topological spaces}

Let $(\GX,\cT)$ be a Hausdorff topological space.
We will denote by $\mathscr B(\GX)$ the Borel $\sigma$-algebra
in $\GX$ and by $\Pcal(\GX)$ the set of all \emph{Radon} probability
measures, i.e.~Borel probability measures satisfying
the
inner approximation property by compact sets 
\begin{equation}\label{eq:radonspaces}
  \forall B\in\mathscr{B}(X),\  \forall \eps>0 \qquad \exists K_{\epsilon}\subset B \text{ compact } \text{ such that } \mu(B\smallsetminus K_{\epsilon})\leq \epsilon.
\end{equation}
Recall that if $(\GX,\cT)$ is
Polish (i.e.~its topology is induced by a metric $\sfd$ such that $(X,\sfd)$ is
complete and separable) or 
it is Lusin (i.e.~$\GX$ admits a Polish topology $\cT'$ finer
than $\cT$) then 
every Borel probability measure on $\GX$ is also Radon.
We notice moreover that if $\mu$ is a Radon measure in a metric space $\GX$ then
its support $\operatorname{supp}(\mu)$ is separable.

We will mostly deal with Borel probability measures in separable
Hilbert/Banach spaces (or dual of separable Banach spaces),
possibly endowed with their weak/weak$^*$
topology. Clearly a separable Banach space is Polish;
the dual of a separable Banach space with its weak$^*$ topology
is a
Lusin space \cite[Theorem 7, page 112]{Schwartz73}.

In order to define the natural weak topology of $\cP(\GX)$ in such
general settings, let us recall
that 
a topological space $(\GX,\cT)$ is called \emph{completely regular} if it is
Hausdorff and for all closed set $F\subset X$ and for all $x_0\in
X\setminus F$ there exists $f\in \rmC_b(X)$ (the space of continuous
and bounded real functions defined in $(X,\cT)$) such that
$f(x_0)=0$ and $f\restr{F}\geq 1$. 
It is worth noticing that every metric space $(X,d)$
and every Hausdorff topological vector space (in particular every Banach space
endowed with the weak or the weak$^*$ topology) are completely
regular topological spaces.

\begin{defn}[Narrow topology in $\cP(X)$]
Let $(\GX,\cT)$ be a completely regular topological space. The
\emph{narrow topology}
in $\cP(\GX)$ is the coarsest topology in $\Pcal(X)$ such that all the
functionals
$\mu\mapsto\int_\GX f\,\d\mu$, $f\in \rmC_b(X)$, are continuous. In particular, we say that a sequence $(\mu_n)_{n\in\mathbb{N}}\subset \Pcal(X)$ is \emph{narrowly convergent} to $\mu\in\Pcal(X)$ if
\begin{equation}\label{eq:wconv}
    \lim_{n\to\infty}\int_Xf(x)\,\d\mu_n(x)=\int_Xf(x)\,\d\mu(x)\qquad \forall f\in \rmC_b(X).
    \end{equation}
\end{defn}
Narrow topology in $\cP(X)$ is also often called {\em weak topology};
since we will mostly deal with the case when $X$ is a Banach space endowed 
with its strong or weak topology
(and the corresponding topologies in $\cP(X)$), we will adopt the term {\em narrow} in order to avoid possible misunderstandings.

The Prokhorov Theorem (see \cite[III-59]{DM78} a proof)
provides an important criterium for relative
compactness
w.r.t.~the narrow topology.
\begin{defn}[Tightness]
A subset $\mathcal{K}\subset\Pcal(X)$ is \emph{tight} if $\forall\epsilon>0$ there exists a compact set $K_{\epsilon}\subset X$ such that $\mu(X\smallsetminus K_{\epsilon})<\epsilon$ for all $\mu\in\mathcal{K}$.
\end{defn}
\begin{thm}[Prokhorov]\label{thm:prok} 
Let $\mathcal{K}$ be a tight subset of $\Pcal(X)$, then $\mathcal{K}$ is relatively compact in $\Pcal(X)$ w.r.t. the narrow convergence. Conversely, if $X$ is a Polish space, then every relatively compact (w.r.t. the narrow convergence) subset of $\Pcal(X)$ is tight.
\end{thm}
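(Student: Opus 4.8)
The plan is to treat the two implications separately, as they rely on genuinely different ideas.

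For the first (and harder) direction, that \emph{tightness implies relative compactness}, I would pass through the canonical embedding $\iota:\Pcal(X)\to (\rmC_b(X))^*$ sending $\mu$ to the functional $f\mapsto\int_X f\,\d\mu$. This map is continuous when the target carries the weak$^*$ topology (indeed the narrow topology is by definition the initial topology it pulls back), and its image lies in the unit ball of $(\rmC_b(X))^*$, since each $\iota(\mu)$ is a positive functional of norm one with $\iota(\mu)(1)=1$. By the Banach--Alaoglu theorem this ball is weak$^*$ compact, so any net $(\mu_\alpha)\subset\mathcal K$ admits a weak$^*$-convergent subnet with limit a positive linear functional $L$ on $\rmC_b(X)$ satisfying $L(1)=1$. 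The whole difficulty then reduces to showing that $L$ is represented by a genuine Radon probability measure $\mu\in\Pcal(X)$, i.e.\ that no mass escapes; once this holds, continuity of $\iota$ onto its image gives narrow convergence $\mu_\alpha\to\mu$ along the subnet, and hence relative compactness.

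This representation step is where I expect the main obstacle to lie, and it is exactly what tightness is designed to control. Using tightness with $\eps=1/m$ and taking finite unions, I would fix an increasing sequence of compacts $K_m$ with $\sup_{\mu\in\mathcal K}\mu(X\smallsetminus K_m)\le 1/m$. On each compact $K_m$, testing $L$ against functions supported near $K_m$ (constructed via complete regularity and Urysohn-type separation) and applying the Riesz representation theorem on the compact space $K_m$ yields a Radon measure $\mu_m$; the uniform estimate forces $\mu_m(K_m)\ge 1-1/m$ and makes the $\mu_m$ consistent, so that the measure they determine on the $\sigma$-compact set $\bigcup_m K_m$ is a Radon probability measure $\mu$ representing $L$. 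The delicate points are the inner-regularity bookkeeping and the fact that $L$ does not charge the ``gap'' $X\smallsetminus\bigcup_m K_m$, both of which follow from transferring the uniform smallness $\mu_\alpha(X\smallsetminus K_m)\le 1/m$ to $L$. This is the content of the argument cited from \cite{DM78}.

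For the converse, assuming $X$ Polish, I would argue directly and more elementarily. Here the narrow topology on $\Pcal(X)$ is metrizable, so relative compactness coincides with relative sequential compactness. Fix $\eps>0$ and a countable dense set $\{x_i\}_{i\in\N}$. The key claim is that for every $n$ there is a finite $N(n)$ with
\begin{equation*}
  \mu\Big(\bigcup_{i=1}^{N(n)}\overline B(x_i,1/n)\Big)>1-\eps\,2^{-n}
  \qquad\text{for all }\mu\in\mathcal K.
\end{equation*}
If this failed for some $n$, I could pick $\mu_N\in\mathcal K$ violating the bound for each $N$, extract a narrowly convergent subsequence $\mu_{N_k}\to\mu$, and reach a contradiction: for the open sets $G_N=\bigcup_{i\le N}B(x_i,1/n)$ the Portmanteau inequality $\mu(G_M)\le\liminf_k\mu_{N_k}(G_M)\le 1-\eps2^{-n}$ holds for every fixed $M$ (using $G_M\subseteq G_{N_k}$ for $N_k\ge M$), and letting $M\to\infty$, so that $G_M\uparrow X$, would give $\mu(X)\le 1-\eps2^{-n}<1$, which is impossible. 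Granting the claim, the set $K_\eps:=\bigcap_n\bigcup_{i=1}^{N(n)}\overline B(x_i,1/n)$ is closed and totally bounded, hence compact by completeness, and a union bound yields $\mu(X\smallsetminus K_\eps)\le\sum_n\eps2^{-n}=\eps$ uniformly in $\mu\in\mathcal K$, which is precisely the required tightness.
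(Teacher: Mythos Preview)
The paper does not actually prove Prokhorov's theorem: the sentence preceding the statement refers the reader to \cite[III-59]{DM78} for a proof, and the theorem is then stated without argument. There is therefore no in-paper proof against which to compare your proposal.

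As to the proposal itself, your outline is the standard one and is essentially sound. For the forward direction, the Banach--Alaoglu/representation strategy is correct; the only step deserving more care is the passage from the weak$^*$ limit $L\in(\rmC_b(X))^*$ to a genuine Radon measure. In the general completely regular (possibly non-metrizable) setting of the paper, restricting $L$ to $C(K_m)$ presupposes that continuous functions on $K_m$ extend to $\rmC_b(X)$, and the cleanest way to organise this is usually via the Stone--\v{C}ech compactification $\beta X$: one identifies $\rmC_b(X)\cong C(\beta X)$, Riesz produces a Radon measure on $\beta X$, and tightness forces that measure to be concentrated on $X\subset\beta X$. Your sketch is compatible with this but glosses over the extension/compactification bookkeeping. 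For the converse in the Polish case, your contradiction argument via Portmanteau on the open sets $G_M$ and the construction of the totally bounded closed set $K_\eps$ is the standard textbook proof and is correct as written.
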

The following result provides an useful integral condition for tightness.
\begin{prop}\label{prop:1}
$\mathcal{K}\subset\Pcal(X)$ is tight if and only if there exists a function $\varphi:X\to[0,+\infty)$, with compact sublevels, such that
\begin{equation}\label{eq:prop1}
\sup_{\mu\in\mathcal{K}}\int_X\varphi(x)\,\d\mu(x)\leq C <+\infty\end{equation}
\end{prop}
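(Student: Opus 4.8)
The plan is to prove the two implications separately: the reverse one (existence of $\varphi$ implies tightness) is an immediate consequence of Markov's inequality, while the direct one (tightness implies existence of $\varphi$) requires an explicit construction of $\varphi$ out of the compact sets provided by tightness. I expect essentially all the work, and the only genuine obstacle, to lie in verifying that the constructed $\varphi$ has compact sublevels.

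For the easy direction, suppose such a $\varphi$ exists with $\sup_{\mu\in\mathcal K}\int_X\varphi\,\d\mu\le C$. Fixing $\eps>0$, I would set $R:=C/\eps$ and $K_\eps:=\{x\in X:\varphi(x)\le R\}$, which is compact by hypothesis. Markov's inequality then gives, for every $\mu\in\mathcal K$,
\[
  \mu(X\setminus K_\eps)=\mu\big(\{\varphi>R\}\big)\le\frac1R\int_X\varphi\,\d\mu\le\frac{C}{R}=\eps ,
\]
so $\mathcal K$ is tight.

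For the converse, assume $\mathcal K$ is tight. First I would extract for each $k\in\N$ a compact set $\widetilde K_k$ with $\sup_{\mu\in\mathcal K}\mu(X\setminus\widetilde K_k)\le\eps_k$ for a sequence $\eps_k\downarrow0$ to be fixed, and replace them by the \emph{nested} compact sets $K_k:=\bigcup_{j=1}^k\widetilde K_j$, which are still compact and satisfy $\sup_{\mu\in\mathcal K}\mu(X\setminus K_k)\le\eps_k$. The candidate is the Borel function
\[
  \varphi(x):=\sum_{k=1}^\infty c_k\,\mathbf 1_{X\setminus K_k}(x),
\]
with coefficients chosen so that $c_k\uparrow+\infty$ while $\sum_k c_k\eps_k<\infty$; for instance $\eps_k:=4^{-k}$ and $c_k:=2^k$ give $\sum_k c_k\eps_k=\sum_k 2^{-k}=1$. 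By monotone convergence,
\[
  \int_X\varphi\,\d\mu=\sum_{k=1}^\infty c_k\,\mu(X\setminus K_k)\le\sum_{k=1}^\infty c_k\eps_k=:C<+\infty
  \qquad\text{for every }\mu\in\mathcal K,
\]
which is the desired uniform bound.

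The key point, and the step I expect to be most delicate, is the compactness of the sublevels. Because the $K_k$ are nested and increasing, for any $x$ lying in some $K_k$ the index set $\{k:x\notin K_k\}$ is an initial segment $\{1,\dots,m(x)-1\}$, where $m(x)$ is the first index with $x\in K_{m(x)}$; hence $\varphi(x)=\sum_{k<m(x)}c_k$ is nondecreasing in $m(x)$, and for every $R\ge0$ one gets $\{\varphi\le R\}=K_{m(R)}$ for a suitable finite index $m(R)$, which is compact. The only caveat is that $\varphi$ equals $+\infty$ on $X\setminus\bigcup_k K_k$: this set is contained in no sublevel (so compactness of $\{\varphi\le R\}$ is unaffected) and, since $\mu\big(X\setminus\bigcup_k K_k\big)=\lim_k\mu(X\setminus K_k)=0$ for every $\mu\in\mathcal K$, it is $\mu$-negligible for all $\mu\in\mathcal K$ and does not affect the integral bound. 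Thus $\varphi$ is finite on the $\sigma$-compact set $\bigcup_k K_k$ carrying all the mass of every element of $\mathcal K$, and has the required properties.
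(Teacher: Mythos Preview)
The paper states this proposition as a standard preliminary fact and gives no proof, so there is nothing in the paper to compare against. Your argument is the classical one and is correct in substance: Markov's inequality for the reverse implication, and the layered construction $\varphi=\sum_k c_k\,\mathbf 1_{X\setminus K_k}$ with nested compacts $K_k$ and summable $c_k\eps_k$ for the direct one.

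The only point worth flagging is the codomain: your $\varphi$ equals $+\infty$ on $X\setminus\bigcup_k K_k$, while the statement asks for $\varphi:X\to[0,+\infty)$. You acknowledge this and correctly observe that the set is $\mu$-null for every $\mu\in\mathcal K$ and lies outside every sublevel, so neither the integral bound nor the compactness of sublevels is affected. Strictly speaking, the existence of a \emph{finite} $\varphi$ with compact sublevels forces $X=\bigcup_n\{\varphi\le n\}$ to be $\sigma$-compact, so the proposition as literally stated already presupposes this (and in the standard references one simply allows $\varphi:X\to[0,+\infty]$). Under either reading your construction is exactly what is needed; this is a quibble about the statement, not a gap in your proof.
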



\subsection{Transport of measures}

\noindent Let $\GX_i$, $i=1,2$, be Hausdorff 
topological spaces, $\mu\in\Pcal(X_1)$ and $\rr:X_1\to X_2$ be a Lusin
$\mu$-measurable 
map (e.g.~a continuous map);
we denote by $\rr_{\sharp }\mu\in\Pcal(X_2)$ the \emph{push-forward} of $\mu$ through $\rr$, defined by
\begin{equation}
  \rr_{\sharp }\mu(B):=\mu(\rr^{-1}(B)) \qquad \forall B\in\mathscr{B}(X_2).
\end{equation}
A particularly important case is provided by the projection maps in
product spaces.
For an integer $N\geq 2$ and $i,j=1,...,N$, we denote by $\pi^i$ and
$\pi^{i,j}$ the projection operators defined on the product space
$\textbf{X}:=X_1\times...\times X_N$ and respectively defined by
\begin{equation}
    \pi^i:(x_1,...,x_N)\mapsto x_i\in X_i,\qquad \pi^{i,j}:(x_1,...,x_N)\mapsto (x_i,x_j)\in X_i\times X_j.
    \end{equation}
If $\mu^i\in\Pcal(X_i)$, $i=1,...,N$, the class of \emph{multiple plans} with marginals $\mu^i$ is defined by
\begin{equation}
    \Gamma(\mu^1,...,\mu^N):=\{\mmu\in\Pcal(\textbf{X}):\pi^i_{\sharp}\mu=\mu^i, i=1,...,N\}.
    \end{equation}
In the case $N=2$, $\mmu\in\Gamma(\mu^1,\mu^2)$ is also called a
\emph{transport plan}
or \emph{coupling} between $\mu^1$ and $\mu^2$.
\begin{remark}
  To every $\mu^1\in\Pcal(X_1)$ and every
  Lusin $\mu^1$-measurable map $\rr:X_1\to X_2$ and we can associate the transport plan
  \begin{equation}\label{eq:rinduce}
  \mmu:=(\id_{X_1}\times
  \rr)_{\sharp }\mu^1\in\Gamma(\mu^1,\rr_{\sharp }\mu^1),\quad
  \text{where $\id_{X_1}:X_1\to X_1$ is the identity map.}
    \end{equation}
If $\mmu$ is representable as in (\ref{eq:rinduce}) then we say that $\mmu$ is \emph{induced} by $\rr$.
\end{remark}

\bigskip

\noindent The following glueing lemma guarantees the existence of multiple plans with given marginals.

\begin{lem}\label{lem:givenmarginal}
Let $X_1,X_2,X_3$ be Lusin or metrizable spaces and let $\gamma^{12}\in\Pcal(X_1\times X_2)$, $\gamma^{13}\in\Pcal(X_1\times X_3)$ such that $\pi_{\sharp }^1\gamma^{12}=\pi_{\sharp }^1\gamma^{13}=\mu^1\in\Pcal(X_1)$. Then there exists $\ggamma\in\Pcal(X_1\times X_2 \times X_3)$ such that
\begin{equation}\label{eq:givenmarginal}
    \pi_{\sharp }^{12}\ggamma=\gamma^{12}, \qquad \pi_{\sharp }^{13}\ggamma=\gamma^{13}.
\end{equation}
\end{lem}
We denote by $\Gamma^1(\gamma^{1\,2},\gamma^{1\,3})$ the subset of plans $\mu\in\Pcal(X_1\times X_2\times X_3)$ satisfying (\ref{eq:givenmarginal}).



\subsection{Optimal Transport and Kantorovich-Rubinstein-Wasserstein spaces}
Let $(\GX,\mathsf d)$ be a metric space and let $p\in [1,+\infty)$.
We say that a Radon measure $\mu\in \cP(\GX)$ belongs to $\cP_p(\GX)$
if
\begin{equation}
  \label{eq:2}
  \int_{\GX} \mathsf d^p(x,x_o)\,\d\mu(x)<+\infty
  \quad\text{for some (and thus any) $x_o\in \GX$.}
\end{equation}
\begin{defn}
  The $L^p$-\emph{Kantorovich-Rubinstein-Wasserstein distance} $W_p$
  between two Radon probability measures $\mu^1,\mu^2\in\Pcal_p(\GX)$ is defined by
\[W_p^p(\mu^1,\mu^2) :=\min\Big\{\int \mathsf
  d^p(x_1,x_2)\,\d\mmu(x_1,x_2): \mmu\in\Gamma(\mu^1,\mu^2)\Big\}.\]
We denote by $\Gamma_o(\mu^1,\mu^2)\subset\Gamma(\mu^1,\mu^2)$
the convex and narrowly compact set of optimal plans where the minimum is attained, i.e.
\[
    \ggamma\in\Gamma_o(\mu^1,\mu^2) \iff \int \mathsf
    d^p(x_1,x_2)\,\d\ggamma(x_1,x_2)=
    W_p^p(\mu^1,\mu^2).
\]
\end{defn}
It is possible to prove that $\Gamma_o$ is not empty and
$(\cP_p(\GX),W_p)$ is a metric space.
It is easy to check that
a set $\mathcal{K}\subset \cP_p(\GX)$ is bounded
(i.e. there exists a measure $\nu\in\Pcal_p(\GX)$ such that
$\{W_p(\mu,\nu)\}_{\mu\in\mathcal{K}}$ is a bounded subset of $\R$)
if and only if
\begin{equation}
  \sup_{\mu\in\mathcal{K}}\int_{\GX}\sfd^p(x,x_o)\,\d\mu<+\infty
  \quad
  \text{for some (and thus any) point $x_o\in \GX$.}\label{eq:3}
\end{equation}
The following result shows the relationships
the narrow topology and the topology induced by the Wasserstein
distance $W_p$.
%
\begin{prop}\label{prop:carattconvWp}
  If $(X,\sfd)$ is separable (resp.~complete)
  then
  $(\Pcal_p(X),W_p)$ is a separable (resp.~complete)
  metric space. A set $\mathcal{K}\subset \Pcal_p(X)$ is relatively compact iff it has uniformly integrable $p$-moments and is tight. In particular, for a given sequence $(\mu_n)\subset\Pcal_p(X)$ we have
\begin{equation}\label{eq:convergenceequivalence}
    \lim_{n\to\infty}W_p(\mu_n,\mu)=0 \iff \begin{cases}\mu_n \text{ narrowly converge to }\mu,\\
    (\mu_n)\text{ has uniformly integrable $p$-moments.}
    \end{cases}
\end{equation}
\end{prop}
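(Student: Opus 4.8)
The plan is to prove the sequential characterization \eqref{eq:convergenceequivalence} first, and then to read off the relative compactness statement and, with little extra work, separability and completeness. Throughout, the Polish case (separable and complete) is the one where Prokhorov's converse applies, and I will invoke Theorem \ref{thm:prok} and Proposition \ref{prop:1} freely.

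For the implication ``$\Rightarrow$'' in \eqref{eq:convergenceequivalence}, assume $W_p(\mu_n,\mu)\to 0$. Narrow convergence follows by testing against bounded Lipschitz functions: choosing optimal plans $\ggamma_n\in\Gamma_o(\mu_n,\mu)$ and using $|f(x_1)-f(x_2)|\le \Lip(f)\,\sfd(x_1,x_2)$ together with H\"older's inequality, one gets $|\int f\,\d\mu_n-\int f\,\d\mu|\le \Lip(f)\,W_p(\mu_n,\mu)\to 0$, and on a metric space the bounded Lipschitz functions determine the narrow topology. Convergence of the $p$-moments comes from the triangle inequality $|W_p(\mu_n,\delta_{x_o})-W_p(\mu,\delta_{x_o})|\le W_p(\mu_n,\mu)$ together with the identity $W_p^p(\nu,\delta_{x_o})=\int\sfd^p(x,x_o)\,\d\nu$. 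Finally, narrow convergence combined with convergence of the $p$-moments forces uniform integrability of $\{\sfd^p(\cdot,x_o)\}$, a standard uniform integrability fact for nonnegative functions (convergence in law plus convergence of the integrals upgrades to uniform integrability).

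For ``$\Leftarrow$'', which is the core of the statement, assume that $\mu_n$ converges narrowly to $\mu$ with uniformly integrable $p$-moments. Since the sequence converges narrowly it is tight by Prokhorov's theorem (Theorem \ref{thm:prok}), and the single measure $\mu$ is tight as well; tightness of both marginal families then yields tightness of the optimal plans $\ggamma_n\in\Gamma_o(\mu_n,\mu)$, because for compact $K_1,K_2$ controlling the marginals the set $K_1\times K_2$ controls $\ggamma_n$. Hence every subsequence of $(\ggamma_n)$ admits a narrowly convergent sub-subsequence to some $\ggamma$, whose marginals are the limits of the marginals and therefore both equal $\mu$, so $\ggamma\in\Gamma(\mu,\mu)$. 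Two ingredients now close the argument. First, stability of optimality under narrow convergence gives $\ggamma\in\Gamma_o(\mu,\mu)$; since $W_p(\mu,\mu)=0$ this forces $\sfd^p(x_1,x_2)=0$ for $\ggamma$-a.e.\ $(x_1,x_2)$, i.e.\ $\ggamma$ is concentrated on the diagonal and $\int\sfd^p\,\d\ggamma=0$. Second, the bound $\sfd^p(x_1,x_2)\le 2^{p-1}\big(\sfd^p(x_1,x_o)+\sfd^p(x_2,x_o)\big)$ together with uniform integrability of the $p$-moments of the two marginals makes the cost uniformly integrable along the sub-subsequence, so that $W_p^p(\mu_{n_k},\mu)=\int\sfd^p\,\d\ggamma_{n_k}\to\int\sfd^p\,\d\ggamma=0$. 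As the initial subsequence was arbitrary, the whole sequence satisfies $W_p(\mu_n,\mu)\to 0$.

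The remaining assertions follow quickly. The relative compactness characterization is deduced from \eqref{eq:convergenceequivalence}: if $\mathcal{K}$ is tight with uniformly integrable $p$-moments, then any sequence in it has a narrowly convergent subsequence (Prokhorov) whose limit inherits both properties, hence $W_p$-converges; conversely, $W_p$-relative compactness yields tightness (through narrow relative compactness) and uniform integrability, for otherwise one could extract a $W_p$-convergent sequence violating the already proven ``$\Rightarrow$''. For separability, when $X$ is separable one approximates a given $\mu\in\Pcal_p(X)$ in $W_p$ by finitely supported measures with rational weights located on a countable dense set, first discarding the mass outside a large ball by finiteness of the $p$-moment. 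For completeness, a $W_p$-Cauchy sequence is bounded, hence tight, so it has a narrow limit $\mu\in\Pcal_p(X)$ (the $p$-moment is narrowly lower semicontinuous); lower semicontinuity of $W_p$ under narrow convergence then gives $W_p(\mu_m,\mu)\le\liminf_k W_p(\mu_m,\mu_{n_k})$, which is small for large $m$ by the Cauchy property, forcing $W_p(\mu_m,\mu)\to 0$. The main obstacle is precisely the direction ``$\Leftarrow$'': narrow convergence only controls bounded continuous test functions, whereas $W_p$ encodes the \emph{unbounded} cost $\sfd^p$; uniform integrability of the $p$-moments is exactly the extra hypothesis needed to pass to the limit in the (unbounded) cost of the optimal plans, while the stability of optimality is what pins the limiting plan to the diagonal and hence the limiting cost to zero.
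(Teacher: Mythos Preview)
The paper does not give its own proof of this proposition; it is recorded as a standard fact (it is essentially \cite[Prop.~7.1.5]{AGS08}). Your argument for the equivalence \eqref{eq:convergenceequivalence} is the textbook one and is correct in the Polish setting you announce at the outset: narrow compactness of the optimal plans via Prokhorov, stability of optimality to pin the limit plan on the diagonal, and uniform integrability of the $p$-moments to pass to the limit in the unbounded cost. The separability sketch is also fine.

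There is, however, a genuine gap in your completeness argument. You write that a $W_p$-Cauchy sequence is ``bounded, hence tight''. Boundedness of the $p$-moments does \emph{not} imply tightness, even in a separable Hilbert space: for an orthonormal sequence $(e_n)$ in $\H$ the Dirac masses $\delta_{e_n}$ satisfy $\int|x|^2\,\d\delta_{e_n}=1$ but $(\delta_{e_n})$ is not tight (no strongly convergent subsequence of $(e_n)$ exists). What is true---and what you need---is that a $W_p$-\emph{Cauchy} sequence is tight, but this requires using the Cauchy property itself, not merely boundedness. One fixes $\eps>0$, chooses $N$ with $W_p(\mu_n,\mu_N)<\eps$ for $n\ge N$, uses that the finitely many Radon measures $\mu_1,\dots,\mu_N$ are individually tight, and then transfers tightness from $\mu_N$ to $\mu_n$ ($n>N$) via an optimal coupling and a totally-bounded/enlargement argument exploiting completeness of $X$ (this is where completeness enters: a closed totally bounded set is compact). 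Alternatively, one bypasses tightness altogether by extracting a fast subsequence with $\sum_k W_p(\mu_{n_k},\mu_{n_{k+1}})<\infty$, gluing the optimal plans into a measure on $X^{\N}$, and using completeness of $X$ to produce an $L^p$-limit of the coordinate maps. Either way, the implication ``bounded $\Rightarrow$ tight'' must be replaced by an argument that actually uses Cauchyness.
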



\begin{prop}[Stability of optimality and narrow lower semicontinuity]\label{prop:7.1.3}
Let $(\mu_n^1)$, $(\mu_n^2)\subset\Pcal_p(X)$ be two bounded sequences
narrowly converging to $\mu^1,\mu^2$ respectively, and let
$\mmu_n\in\Gamma_o(\mu_n^1,\mu_n^2)$ be a sequence of optimal plans.
Then $(\mmu_n)$ is narrowly relatively compact in $\Pcal(X\times X)$ and any narrow limit point $\mmu$ belongs to $\Gamma_o(\mu^1,\mu^2)$, with
\begin{equation}
    \begin{split}
        W_p^p(\mu^1,\mu^2) & = \int_{X^2}\sfd^p(x_1,x_2)\,\d\mmu(x_1,x_2)\\
        & \leq
        \liminf_{n\to\infty}\int_{X^2}\sfd^p(x_1,x_2)\,\d\mmu_n(x_1,x_2)
        =\liminf_{n\to\infty}W^p_p(\mu_n^1,\mu_n^2).
    \end{split}
\end{equation}
\end{prop}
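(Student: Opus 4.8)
The plan is the classical four-step compactness argument: (i) extract a narrowly convergent subsequence of the optimal plans, (ii) identify its limit as a coupling of $\mu^1,\mu^2$, (iii) pass the cost to the limit by lower semicontinuity, and (iv) prove the limit is optimal, which is the only genuinely nontrivial point.

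First I would establish relative compactness of $(\mmu_n)$. Since $\mu_n^1\to\mu^1$ and $\mu_n^2\to\mu^2$ narrowly, each convergent sequence together with its limit is narrowly compact and therefore tight by Theorem \ref{thm:prok}. Given $\eps>0$, pick compact sets $K_1,K_2\subset X$ with $\mu_n^i(X\smallsetminus K_i)\le\eps/2$ for all $n$; since $\mu_n^1,\mu_n^2$ are the marginals of $\mmu_n$,
\[
\mmu_n\big((X\times X)\smallsetminus(K_1\times K_2)\big)\le\mu_n^1(X\smallsetminus K_1)+\mu_n^2(X\smallsetminus K_2)\le\eps,
\]
and $K_1\times K_2$ is compact. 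Hence $(\mmu_n)$ is tight, and Prokhorov's Theorem yields a subsequence $\mmu_{n_k}$ with narrow limit $\mmu$. Because the projections $\pi^1,\pi^2$ are continuous, push-forward along them is narrowly continuous, so $\pi^i_{\sharp}\mmu=\lim_k\pi^i_{\sharp}\mmu_{n_k}=\lim_k\mu_{n_k}^i=\mu^i$ (using that the narrow topology is Hausdorff), i.e.\ $\mmu\in\Gamma(\mu^1,\mu^2)$; in particular $\int c\,\d\mmu\ge W_p^p(\mu^1,\mu^2)$, where $c(x_1,x_2):=\sfd^p(x_1,x_2)$.

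Next, since $c$ is continuous and nonnegative, the functional $\mmu\mapsto\int c\,\d\mmu$ is narrowly lower semicontinuous (write $c=\sup_k c_k$ with $c_k\in\rmC_b(X\times X)$, $c_k\up c$, and pass to the limit in each $\int c_k\,\d\mmu_{n_k}$). Using the optimality of every $\mmu_n$ this gives
\[
\int_{X^2}c\,\d\mmu\le\liminf_k\int_{X^2}c\,\d\mmu_{n_k}=\liminf_k W_p^p(\mu_{n_k}^1,\mu_{n_k}^2),
\]
the right-hand side being finite since boundedness of the marginals and \eqref{eq:3} bound $W_p(\mu_n^1,\mu_n^2)\le W_p(\mu_n^1,\delta_{x_o})+W_p(\delta_{x_o},\mu_n^2)$ uniformly in $n$.

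The hard part is the reverse inequality $\int c\,\d\mmu\le W_p^p(\mu^1,\mu^2)$, which makes $\mmu$ optimal and turns the first relation above into an equality. This cannot come from continuity of $W_p$ along the sequence: with only narrow convergence and bounded (but not uniformly integrable) moments, mass may escape to infinity, and $\liminf_k W_p^p(\mu_{n_k}^1,\mu_{n_k}^2)$ can strictly exceed $W_p^p(\mu^1,\mu^2)$. I would instead exploit that optimality is a \emph{closed} condition under narrow convergence via $c$-cyclical monotonicity. Each optimal $\mmu_n$ is concentrated on a $c$-cyclically monotone set, hence $\supp\mmu_n$ is $c$-cyclically monotone. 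For any finite family $(x_i,y_i)\in\supp\mmu$, narrow convergence forces every open neighborhood to carry positive $\mmu_{n_k}$-mass for large $k$, so I can choose $(x_i^k,y_i^k)\in\supp\mmu_{n_k}$ with $(x_i^k,y_i^k)\to(x_i,y_i)$; writing the cyclical-monotonicity inequalities for $\mmu_{n_k}$ and letting $k\to\infty$, continuity of $c$ transfers $c$-cyclical monotonicity to $\supp\mmu$. Since $\mmu$ has finite cost, the standard characterization of optimal transport plans (a finite-cost plan is optimal iff its support is $c$-cyclically monotone, for continuous cost $c$) gives $\mmu\in\Gamma_o(\mu^1,\mu^2)$, i.e.\ $\int c\,\d\mmu=W_p^p(\mu^1,\mu^2)$. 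Combined with the previous display this closes the full chain of equalities and inequalities.
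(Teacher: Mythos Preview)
The paper does not include a proof of this proposition: it is stated in the preliminaries as a known result (the label \texttt{prop:7.1.3} points to Proposition~7.1.3 of \cite{AGS08}, from which it is quoted verbatim). So there is no ``paper's own proof'' to compare against; your argument is to be judged on its own.

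Your strategy is the standard one and is essentially correct. Two small remarks. First, when you invoke the converse of Prokhorov (Theorem~\ref{thm:prok}) to deduce tightness of the convergent marginal sequences, note that the paper only states that converse for Polish spaces; in the general metric/Radon setting you need a slightly different justification (e.g.\ use that each $\mu^i$ is Radon, pick a compact $K_i$ with $\mu^i(X\setminus K_i)<\eps$, enlarge to an open neighborhood, and use lower semicontinuity of open-set masses along the narrow convergence). Second, in step (iv) you should make explicit why the equivalence ``finite-cost plan is optimal $\Leftrightarrow$ support is $c$-cyclically monotone'' applies: here $c=\sfd^p$ is continuous and satisfies $c(x_1,x_2)\le 2^{p-1}(\sfd^p(x_1,x_o)+\sfd^p(x_2,x_o))$, with the right-hand side integrable against the marginals by the boundedness assumption, so the standard hypotheses for that characterization are met. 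With these two points clarified your proof is complete.
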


\section{A strong-weak topology on measures in product spaces}
\label{sec:sw}
Let us consider
a
separable Banach space $\X$
endowed with the strong topology induced by
its norm $\|\cdot\|_\X$
(we will occasionally use the notation $\X_s$ when we want to emphasize
the choice of the strong topology)
and a reflexive and separable
Banach space $(\Y,\|\cdot\|_\Y)$.
We will denote by $\Y_w$
the space $\Y$ endowed with the weak topology $\sigma(\Y,\Y')$.

We are interested in Radon probability measures
in the topological space
$\PXtop$.
Since $\PXtop$ is endowed with the product topology of two Lusin and
completely regular spaces, it is a
completely regular Lusin space as well;
in particular Borel measures are Radon,
the set $\cP(\PXtop)$ coincides with $\cP(\X_s\times \Y_s)$,
and narrow convergence in
$\cP(\PXtop)$ is well defined.

Let us set $\PXname:=\X\times \Y$;
we want now to introduce a natural topology on the subset
\begin{equation}
  \label{eq:4}
  \cP_{pq}(\PXname):=\Big\{\mu\in \cP(\PXname):
  \int
  \Big(\|x\|_{\X}^p+\|y\|_{\Y}^q\Big)\,\d\mu(x,y)<+\infty\Big\}\quad
  p\in [1,+\infty),\ q\in (1,+\infty).
\end{equation}
In order to define such a topology, we consider the space $\testsw pq\PXname$ of test functions
$  \zeta:\PXname\to \R$ such that
\begin{gather}
  \label{eq:36}
  \zeta\text{ is sequentially continuous in $\PXtop$,}\\
  \label{eq:37}
  \forall\,\eps>0\ \exists\,A_\eps\ge 0: |\zeta(x,y)|\le
  A_\eps(1+\|x\|_\X^p)+\eps \|y\|_{\Y}^q\quad\text{for every }(x,y)\in \PX.
\end{gather}
We endow $\testsw pq\PXname$ with the norm
\begin{equation}
  \label{eq:32}
  \|\zeta\|_{\testsw pq\PXname}:=
  \sup_{(x,y)\in \PXname}\frac{|\zeta(x,y)|}{1+\|x\|_\X^p+\|y\|_{\Y}^q}.
\end{equation}
\begin{remark}
  When $\Y$ is finite dimensional, \eqref{eq:36} is equivalent to the
  continuity of $\zeta$.
  It is worth noticing that if 
  $p,q$ are conjugate exponents,
  any continuous and bilinear map
  $\beta: \PX\to \R$
  belongs to $\testsw pq\PX$. In fact,
  it is easy to check that $\beta$ is sequentially continuous
  in $\PXtop$ and its continuity yields the existence of
  a constant $L\ge 0$ such that
  \begin{displaymath}
    |\beta(x,y)|\le
    L\|x\|_\X\,\|y\|_{\Y}\quad\text{for every }x\in \X,\ y\in \Y,
  \end{displaymath}
  so that 
  \begin{displaymath}
    |\beta(x,y)|
    \le \frac {L^p}{p\eps^{p/q}}\|x\|_{\X}^p+\frac \eps q\|y\|_{\Y}^q
    \quad\text{for every }x\in \X,\ y\in \Y,\ \eps>0.
  \end{displaymath}
  This covers in particular the case when $\Y=\X$ is an Hilbert space
  and $\beta$ coincides
  with the scalar product $\la \cdot,\cdot\ra$ in $\X$.
\end{remark}
\begin{lemma}
  $(\testsw pq\PXname,\|\cdot\|_{\testsw
  pq\PXname})$ is a Banach space.
\end{lemma}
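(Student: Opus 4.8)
The plan is to dispose of the normed-space axioms quickly and then concentrate on completeness, which is the only substantial point. First I would check that \eqref{eq:32} defines a finite norm on $\testsw pq\PXname$: for $\zeta$ satisfying \eqref{eq:37} with, say, $\eps=1$, one has $|\zeta(x,y)|\le \max(A_1,1)\,(1+\|x\|_\X^p+\|y\|_\Y^q)$, so the supremum is finite; homogeneity and the triangle inequality are immediate from the supremum structure, and positivity holds because the weight $1+\|x\|_\X^p+\|y\|_\Y^q$ never vanishes. That $\testsw pq\PXname$ is a linear space follows because sequential continuity is stable under linear combinations and because the growth bounds \eqref{eq:37} add up, splitting the prescribed $\eps$ between the two summands (and rescaling it for scalar multiples).

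For completeness, let $(\zeta_n)$ be Cauchy. For every $(x,y)$,
\begin{equation*}
  |\zeta_n(x,y)-\zeta_m(x,y)|\le \|\zeta_n-\zeta_m\|_{\testsw pq\PXname}\,\bigl(1+\|x\|_\X^p+\|y\|_\Y^q\bigr),
\end{equation*}
so the real sequence $(\zeta_n(x,y))_n$ is Cauchy and converges to a limit $\zeta(x,y)$. For $n,m$ beyond a Cauchy threshold $N(\eps)$ the quotient $|\zeta_n(x,y)-\zeta_m(x,y)|/(1+\|x\|_\X^p+\|y\|_\Y^q)$ is $<\eps$ for all $(x,y)$; letting $m\to\infty$ and taking the supremum over $(x,y)$ gives $\|\zeta_n-\zeta\|_{\testsw pq\PXname}\le\eps$, i.e. $\zeta_n\to\zeta$ in the weighted norm. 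The bound $M:=\sup_n\|\zeta_n\|_{\testsw pq\PXname}<\infty$ passes to the limit and shows $\zeta$ has finite norm. It remains to verify that $\zeta$ genuinely belongs to $\testsw pq\PXname$, i.e. that it inherits \eqref{eq:36} and \eqref{eq:37}; equivalently, that $\testsw pq\PXname$ is closed in the complete space of weighted-bounded functions.

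The growth condition \eqref{eq:37} for $\zeta$ is a routine $\eps/2$ argument: given $\eps>0$, pick $n$ with $\|\zeta_n-\zeta\|_{\testsw pq\PXname}<\eps/2$, and $A'$ such that $\zeta_n$ satisfies \eqref{eq:37} with parameter $\eps/2$; combining the two estimates yields $|\zeta(x,y)|\le (A'+\eps/2)(1+\|x\|_\X^p)+\eps\|y\|_\Y^q$. The hard part will be the sequential continuity \eqref{eq:36}, and the key observation is that convergence in $\PXtop$ confines the arguments to a bounded set: if $x_k\to x$ strongly in $\X$ and $y_k\weakto y$ in $\Y$, then $(y_k)$ is norm-bounded by uniform boundedness in the reflexive space $\Y$, so $1+\|x_k\|_\X^p+\|y_k\|_\Y^q\le C$ uniformly in $k$. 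On such a bounded set the weighted convergence $\zeta_n\to\zeta$ upgrades to ordinary uniform convergence, and the classical \emph{uniform limit of continuous functions is continuous} argument applies: splitting $\zeta(x_k,y_k)-\zeta(x,y)$ through an intermediate $\zeta_n$, the two tail terms are bounded by $C\,\|\zeta_n-\zeta\|_{\testsw pq\PXname}$ and the middle term tends to $0$ by the sequential continuity of $\zeta_n$, whence $\zeta(x_k,y_k)\to\zeta(x,y)$. This shows $\zeta\in\testsw pq\PXname$ and completes the proof. I expect this continuity step to be the only place where the specific structure of $\PXtop$ — precisely the norm-boundedness of weakly convergent sequences in $\Y$ — really enters.
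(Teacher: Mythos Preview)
Your proof is correct and follows essentially the same route as the paper's: pointwise limit of a Cauchy sequence, then an $\eps/2$ splitting to recover the growth bound \eqref{eq:37}, and the key step for sequential continuity \eqref{eq:36} is exactly the one you identify, namely that a sequence converging in $\PXtop$ has uniformly bounded weight $1+\|x_k\|_\X^p+\|y_k\|_\Y^q$, so weighted-uniform convergence becomes genuine uniform convergence on that sequence and a standard $3\eps$ argument closes the proof. The paper dismisses the normed-space axioms as obvious and orders the two verifications (continuity, growth) oppositely, but the substance is identical.
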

\begin{proof}
  It is obvious that $\|\cdot\|_{\testsw
    pq\PXname} $ is a norm, we can thus check the completeness.
    Let $\{\zeta_n\}_{n\in \N}$ be a Cauchy sequence in
    $\testsw pq
    \PXname$ and let $\zeta$ its pointwise limit, so that for every
    $\eta>0$ there exists $N_\eta\in \N$ such that 
    \begin{equation}
      \label{eq:38}
      |\zeta(x,y)-\zeta_n(x,y)|\le
      \eta(1+\|x\|_\X^p+\|y\|_{\Y}^q)\quad\text{for every }(x,y)\in
      \PX,\ n\ge N_\eta.
    \end{equation}
    If $(x_k, y_k)$ is a
    sequence converging to $(\bar x,\bar y)$ in $\PXtop$ as
    $k\to\infty$ we know that
    $$\|\bar x\|_\X^p+\|\bar y\|_{\Y}^q\le R:=
    \sup_k \|x_k\|_\X^p+\|y_k\|_{\Y}^q<\infty$$
    so that for every $n\ge N_\eta$
    \begin{displaymath}
      \limsup_{k\to\infty}|\zeta(\bar x,\bar y)-\zeta(x_k,y_k)|\le
      2(1+R) \eta+\limsup_{k\to\infty} |\zeta_n(\bar x,\bar y)-\zeta_n(x_k,y_k)|= 2(1+R) \eta.      
    \end{displaymath}
    Since $\eta$ is arbitrary, we conclude that $\zeta$ is
    sequentially continuous in $\PXname$.
    Let us eventually check that $\zeta$ satisfies \eqref{eq:37}.
    For a given $\eps>0$ we apply 
    \eqref{eq:38} with $\eta:=\eps/2$ and we pick up $n\ge
    N_\eta$. Since $\zeta_n$ belongs to $\testsw pq\PXname$, we find
    $B_\eps\ge0$ such that
    \begin{displaymath}
      |\zeta_n(x,y)|\le
      B_\eps(1+\|x\|_\X^p)+\eps/2 \|y\|_{\Y}^q\quad\text{for every }(x,y)\in \PX.
    \end{displaymath}
    Combining such inequality with \eqref{eq:38} we conclude that
    \begin{displaymath}
      |\zeta(x,y)|\le
      |\zeta(x,y)-\zeta_n(x,y)|+|\zeta_n(x,y)|\le
      (\eps+B_\eps)(1+\|x\|_\X^p)+\eps\|y\|_{\Y}^q.
    \end{displaymath}
  \end{proof}
  \renewcommand{\mmu}{\boldsymbol \mu}
  \begin{defn}[Topology of $\wcP pq\PX$]
    \label{def:topology}
        We endow {$\Pcal_{pq}( \PX)$} 
    with the initial topology 
    induced by the
    functions
    \begin{equation}
      \label{eq:39}
      \mmu\mapsto \int\zeta(x,y)\,\d\mmu(x,y),\quad
      \zeta\in \testsw pq{\PX},
    \end{equation}    
    and we call $\wcP pq \PX$ the topological space
    $\big(\cP_{pq}(\PX),\sigma\big(\Pcal_{pq}( \PX),\testsw pq{\PX}\big)\big).$
  \end{defn}
  It is obvious that whenever $r\ge p\lor q$
  the Wasserstein topology of $\cP_r(\X\times \Y)$ 
    (induced by the Wasserstein distance $W_r$ 
    generated by any product norm in the Banach space $\X\times \Y$)
  is finer than the
  topology of $\wcP pq\PX$ and
  the latter is finer than the narrow
  topology of $\cP(\PXtop)$.
  The 
  next proposition collects other important properties
  and justifies the
  interest of the $\wcP pq\PX$-topology.
  \begin{prop}
    \label{prop:finalmente}
    \begin{enumerate}
    \item If $(\mmu_\alpha)_{\alpha\in\mathbb A}\subset \wcP pq\PXname$ is
      a net indexed by the directed set $\mathbb A$ and $\mu\in
      \wcP pq\PXname$ satisfy
      \begin{enumerate}[(i)]
          \item[(i)] $\mmu_\alpha\to\mmu$ 
          narrowly in $\cP(\PXtop)$;
      \item[(ii)] $\displaystyle \lim_{\alpha\in \mathbb A}\int \|x\|_\X^p\,\d\mmu_\alpha=\int \|x\|_\X^p\,\d\mmu$; 
      \item[(iii)] $\displaystyle \sup_{\alpha\in\mathbb A} \int\|y\|_{\Y}^q\,\d\mmu_\alpha<\infty$,
      \end{enumerate}
      then $\mmu_\alpha\to\mmu$ in $\wcP pq\PXname$.
      The converse property holds for sequences: i.e.~if $\mathbb A=\N$
      and $\mmu_n\to\mu$ in $\wcP pq\PXname$ as $n\to\infty$, then
      properties (a), (b), (c) hold.
    \item For every compact set $\mathcal K\subset \cP_p(\X)$ and
      every constant $c<\infty$ the sets
      \begin{equation}
      \mathcal K_c:=\Big\{\mmu\in \wcP pq\PXname:\pi^1_\sharp\mmu\in
      \mathcal K,\quad
      \int \|y\|_{\Y}^q\,\d\mmu\le c\Big\}\label{eq:41}
    \end{equation}
    are compact and metrizable in
    $\wcP pq\PXname$ (in particular they are sequentially compact).
    \end{enumerate}
  \end{prop}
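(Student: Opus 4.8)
The plan is to exploit that, by construction, $\wcP pq\PX$ is the restriction to $\cP_{pq}(\PX)$ of the weak$^*$ topology on (a subset of) the dual of the Banach space $\testsw pq\PX$, each $\mmu$ acting by $\zeta\mapsto\int\zeta\,\d\mmu$. I would first dispatch the converse part of (1), which isolates the role of this Banach space structure. Since $\rmC_b(\PXtop)\subset\testsw pq\PX$ (a bounded continuous function trivially satisfies \eqref{eq:37}) and since $(x,y)\mapsto\|x\|_\X^p$ lies in $\testsw pq\PX$, convergence $\mmu_n\to\mmu$ in $\wcP pq\PX$ immediately yields (i) and (ii) by testing against these functions. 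For (iii) I would invoke Banach--Steinhaus: for each fixed $\zeta$ the sequence $\int\zeta\,\d\mmu_n$ converges, hence is bounded, so $\{\mmu_n\}$ is pointwise bounded on the Banach space $\testsw pq\PX$ and therefore $S:=\sup_n\sup_{\|\zeta\|\le1}\bigl|\int\zeta\,\d\mmu_n\bigr|<\infty$. To turn this dual bound into a moment bound I approximate $\|y\|_\Y^q$ from below by admissible test functions: fixing $\{y_j^*\}$ dense in the unit ball of the separable dual $\Y'$, the functions $\psi_{N,M}(y):=\min\bigl((\max_{j\le N}|\langle y,y_j^*\rangle|)^q,\,M\bigr)$ are bounded and sequentially weakly continuous, hence lie in $\testsw pq\PX$ with $\|\psi_{N,M}\|_{\testsw pq\PX}\le1$ because $\psi_{N,M}\le\|y\|_\Y^q$. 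Thus $\int\psi_{N,M}\,\d\mmu_n\le S$, and letting first $M\up\infty$ and then $N\up\infty$, monotone convergence together with $\max_{j\le N}|\langle y,y_j^*\rangle|\up\|y\|_\Y$ gives $\int\|y\|_\Y^q\,\d\mmu_n\le S$ for all $n$, i.e.~(iii).

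For the forward implication in (1) I fix $\zeta\in\testsw pq\PX$ and must show $\int\zeta\,\d\mmu_\alpha\to\int\zeta\,\d\mmu$; set $M:=\sup_\alpha\int(\|x\|_\X^p+\|y\|_\Y^q)\,\d\mmu_\alpha<\infty$, finite by (ii)--(iii). The splitting dictated by \eqref{eq:37} is treated asymmetrically in the two variables. In the $x$-variable I cut off with a strongly continuous $\chi_R(x)$ (equal to $1$ on $\{\|x\|_\X\le R\}$, supported in $\{\|x\|_\X\le 2R\}$): narrow convergence together with convergence of the $p$-th moments (assumption (ii)) yields uniform integrability of $\|x\|_\X^p$ along the net, so that $\int|\zeta|(1-\chi_R)\,\d\mmu_\alpha$ and the corresponding term for $\mmu$ are made arbitrarily small, uniformly in $\alpha$, by taking $R$ large. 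In the $y$-variable I do \emph{not} truncate (the norm is not weakly continuous), but use that \eqref{eq:37} bounds the high-$\|y\|_\Y$ contribution by $\eps\int\|y\|_\Y^q\,\d\mmu_\alpha\le\eps M$; a value-truncation $\xi_M:=(-M)\vee(\zeta\chi_R)\wedge M$ then reduces matters, up to an error $O(\eps)+o_M(1)$ controlled by Markov's inequality and (iii), to a \emph{bounded} sequentially continuous function $\xi_M$ supported in $\{\|x\|_\X\le2R\}$.

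The main obstacle is precisely here: narrow convergence of a \emph{net} controls genuinely continuous bounded functions, whereas $\xi_M$ is only \emph{sequentially} continuous on the non-metrizable space $\PXtop$. I would resolve this by a tightness and metrizability reduction. Reflexivity of $\Y$ makes the balls $\{\|y\|_\Y\le L\}$ weakly compact, and Markov's inequality with (iii) bounds $\mmu_\alpha(\|y\|_\Y>L)\le M/L^q$ uniformly, giving tightness in the $\Y_w$-direction; narrow convergence of the $\X$-marginals in the Polish space $\X_s$ with convergent $p$-moments gives, through Proposition~\ref{prop:carattconvWp} and Theorem~\ref{thm:prok}, tightness in the $\X_s$-direction. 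Hence the $\mmu_\alpha$ and $\mmu$ concentrate, up to arbitrarily small mass uniformly in $\alpha$, on a compact product $K=K^\X\times K^\Y\subset\PXtop$; such $K$ are metrizable (a compact subset of $\X_s$ is metrizable, and bounded weakly compact subsets of the separable reflexive $\Y$ are weakly metrizable), so $\xi_M$ restricted to $K$ is genuinely continuous. Since $\PXtop$ is completely regular, $\xi_M|_K$ extends to some $g\in\rmC_b(\PXtop)$ with $\|g\|_\infty\le M$ (embed $\PXtop$ in a cube and apply Tietze to its compact closure), and then $\bigl|\int\xi_M\,\d\mmu_\alpha-\int g\,\d\mmu_\alpha\bigr|\le 2M\,\mmu_\alpha(K^c)$ is uniformly small while $\int g\,\d\mmu_\alpha\to\int g\,\d\mmu$ by (i). Collecting the $O(\eps)$ errors and letting $\eps\down0$ proves $\int\zeta\,\d\mmu_\alpha\to\int\zeta\,\d\mmu$.

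Finally, for (2) I would combine tightness with part (1). Compactness of $\mathcal K$ in $\cP_p(\X)$ gives, by Proposition~\ref{prop:carattconvWp}, tightness and uniform $p$-integrability of the $\X$-marginals, while $\int\|y\|_\Y^q\,\d\mmu\le c$ and reflexivity give tightness in $\Y_w$ as above; hence $\mathcal K_c$ is tight in $\PXtop$ and, by Theorem~\ref{thm:prok}, relatively narrowly compact. Given any net in $\mathcal K_c$, a narrowly convergent subnet has $\X$-marginals converging in the compact $\mathcal K$, hence in $W_p$ (yielding (ii)), and satisfies (iii) with the uniform bound $c$; by part (1) it converges in $\wcP pq\PX$, and weak lower semicontinuity of $\mmu\mapsto\int\|y\|_\Y^q\,\d\mmu$ together with closedness of $\mathcal K$ keeps the limit in $\mathcal K_c$. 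Thus $\mathcal K_c$ is $\wcP pq\PX$-compact. For metrizability I note that on the tight set $\mathcal K_c$ the narrow topology is induced by a countable family in $\rmC_b(\PXtop)$ (the measures being concentrated on a fixed sequence of compact metrizable sets), and by part (1) the $\wcP pq\PX$-topology on $\mathcal K_c$ is this narrow topology enriched only by the single continuous functional $\mmu\mapsto\int\|x\|_\X^p\,\d\mmu$; a compact Hausdorff topology generated by a countable family of continuous functions separating points is metrizable, so $\mathcal K_c$ is compact and metrizable, and in particular sequentially compact.
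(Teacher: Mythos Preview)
Your treatment of the converse direction in (1) and of compactness in (2) is essentially the paper's argument (Banach--Steinhaus plus monotone approximation of $\|y\|_\Y^q$ by the truncated maxima of dual pairings; tightness from the compact $\mathcal K$ and the $q$-moment bound, then part (1)). For metrizability the paper takes a more direct route---it exhibits the concrete metric given by the $L^1$-Wasserstein distance associated to $\sfd((x_1,y_1),(x_2,y_2))=\|x_1-x_2\|_\X+\sfd_\varpi(y_1,y_2)$, with $\sfd_\varpi$ the bounded weak metric \eqref{eq:1b}, observes that this induces a coarser Hausdorff topology than $\wcP pq\PX$, and concludes by compactness---but your abstract separating-family argument is a reasonable alternative.

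The forward implication in (1), however, has a genuine gap. Your Tietze step requires the net to be (eventually) tight in the $\X_s$-direction, and you derive this from Proposition~\ref{prop:carattconvWp} and Prokhorov's converse. But Prokhorov's converse gives tightness of \emph{relatively compact} sets, and the range of a convergent \emph{net} need not be relatively compact. Concretely, take $\Y=\{0\}$, $\X=\ell^2$, let $\mathbb A=\{(n,x):n\in\N,\ \|x\|_\X=1\}$ be directed by the first coordinate only, and set $\mmu_{(n,x)}:=\tfrac12\delta_0+\tfrac12\delta_{x/n}$. Then $\mmu_{(n,x)}\to\delta_0$ narrowly in $\cP(\X_s)$ and $\int\|x\|_\X^p\,\d\mmu_{(n,x)}=\tfrac12 n^{-p}\to0$, so (i)--(iii) hold; yet for every compact $K\subset\ell^2$ and every index $(n_0,x_0)$ one can pick $x$ on the unit sphere with $x/n_0\notin K$, giving $(n_0,x)\ge(n_0,x_0)$ and $\mmu_{(n_0,x)}(K^c)\ge\tfrac12$. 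So the tightness you invoke simply fails. (The approach is salvageable, since the non-metrizability lives only in $\Y_w$ and there you \emph{do} have uniform tightness from (iii); but as written the argument does not go through, and repairing it is not shorter than the paper's proof.)

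The paper avoids all of this with a one-line trick. Given $\zeta\in\testsw pq\PX$ and $\eps>0$, set
\[
\zeta_\eps(x,y):=\zeta(x,y)+A_\eps\bigl(1+\|x\|_\X^p\bigr)+2\eps\,\|y\|_\Y^q\ge0.
\]
The sublevel $\{\zeta_\eps\le c\}$ is sequentially closed and contained in $\X\times\{\|y\|_\Y^q\le c/\eps\}$, which is metrizable; hence it is genuinely closed, so $\zeta_\eps$ is l.s.c.\ on $\PXtop$. Narrow convergence then gives $\liminf_\alpha\int\zeta_\eps\,\d\mmu_\alpha\ge\int\zeta_\eps\,\d\mmu$ directly; subtracting $A_\eps\int(1+\|x\|_\X^p)\,\d\mmu_\alpha$ (which converges by (ii)) and $2\eps\int\|y\|_\Y^q\,\d\mmu_\alpha$ (bounded by (iii)) and letting $\eps\downarrow0$ yields $\liminf_\alpha\int\zeta\,\d\mmu_\alpha\ge\int\zeta\,\d\mmu$. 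Applying the same to $-\zeta$ finishes. No truncation, no tightness, no extension.
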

  \begin{proof}
    Let us consider the first claim and let 
    $(\mmu_\alpha)_{\alpha\in \mathbb A}$ in $\wcP pq\PXname$
    satisfy properties (i),
    (ii), (iii) with $S:=\sup_\alpha\int \|y\|_{\Y}^q\,\d\mmu_\alpha<\infty$.
    
    We first observe that $\pi^1_\sharp \mmu_\alpha\to \pi^1_\sharp \mmu$ in
    $\cP_p(\X)$.
    Let us now fix $\zeta\in \testsw pq\PXname$ and for every $\eps>0$ let
    $A_\eps$ as in \eqref{eq:37}. The function
    $\zeta_\eps(x,y):=\zeta(x,y)+A_\eps(1+\|x\|_\X^p)+2\eps \|y\|_{\Y}^q$
    is nonnegative and it is also lower semicontinuous w.r.t.~the
    $\PXtop$-topology: in fact, the sublevels 
    $X_{\eps,c}:=\{(x,y)\in \X\times \Y:\zeta_\eps(x,y)\le c\}$ of
    $\zeta_\eps$ are sequentially closed and contained in $\X\times\{y\in \X:\|y\|_{\Y}^q\le c/\eps\}$ which is a
    metrizable space, so that $X_{\eps,c}$ is closed in $\PXtop$.
    It follows that
    \begin{align*}
      \liminf_{\alpha\in \mathbb A}\int \zeta\,\d\mmu_\alpha
      &=\liminf_{\alpha\in \mathbb A}\int \zeta_\eps\,\d\mmu_\alpha-A_\eps\int
        \big(1+\|x\|_\X^p\big)\,\d\mmu_\alpha-2\eps\int \|y\|_\Y^q\Big)\,\d\mmu_\alpha
      \\&\ge \int \zeta_\eps\,\d\mmu-A_\eps\int
      \big(1+\|x\|_\X^p\big)\,\d\mmu-2\eps S
      \ge \int\zeta\,\d\mmu-2\eps S
    \end{align*}
    and, since $\eps>0$ is arbitrary, $      \liminf_{\alpha\in
      \mathbb A}\int
    \zeta\,\d\mmu_\alpha\ge \int\zeta\,\d\mmu$.
    Applying the same argument to $-\zeta$ we conclude that $\mmu_\alpha$
    converges to $\mmu$ in $\wcP pq\PX$.

    In order to prove the converse implication in the case of
    sequences, let us observe that 
    if $\mmu_n\to\mmu$ in
    $\wcP pq\PX$ then properties (i) and (ii) are obvious.
    Since $\testsw pq\PX$ is a Banach space and each measure $\mmu_n$
    induces a bounded linear functional $L_n$ on $\testsw pq\PX$,
    the principle of uniform
    boundedness implies that
    $S:=\sup_{n}\|L_n\|_{(\testsw pq\PXname)'}<\infty$, i.e.~
    \begin{equation}
      \label{eq:44}
      \int \zeta\,\d\mmu_n\le S\quad\text{for every }\zeta\in
      \testsw pq\PXname,\ |\zeta(x,y)|\le 1+\|x\|_\X^p+\|y\|_{\Y}^q.
    \end{equation}
    Let now $(\mathsf e_h)_{h\in \N}$ be a strongly dense subset
    of the unit ball of $\Y'$ (the dual of $\Y$, which is 
    separable as well) and let
    \begin{equation}
      \label{eq:45}
      \zeta_k(x,y):=\Big(\sup_{1\le h\le k}|\langle y,\mathsf
      e_h\rangle|\Big)^q\land k
    \end{equation}
    Clearly each $\zeta_k$ belongs to the unit ball of $\testsw pq\PXname$
    so that
    \begin{equation}
      \label{eq:46}
      \int \zeta_k(x,y)\,\d\mmu_n(x,y)\le S\quad\text{for every
      }k,n\in \N.
    \end{equation}
    Since $\zeta_k(x,y)\uparrow \|y\|_{\Y}^q$ as $k\to\infty$, Lebesgue
    Dominated Convergence Theorem yields $\int \|y\|_{\Y}^q\,\d\mmu_n\le S$
    for every $n\in \N$.

    \medskip\noindent
    (b)
    Since $\mathcal K$ is tight and $y\mapsto \|y\|_{\Y}^q$ has compact
    sublevel in $\Y_{w}$, the set $\mathcal K_c$ is tight in $\cP(\PXname)$
    and it is also closed, so that it is compact in $\cP(\PXname)$.
    Every net $(\mmu_\alpha)_{\alpha\in \mathbb A}$ in $\mathcal
    K_c$ has a subnet $(\mmu_{\alpha(\beta)})_{\beta\in
      \mathbb B}$ converging to $\mmu\in \mathcal K_c$ in $\cP(\PXname)$.
    Since $\pi^1_\sharp \mmu_\alpha$ is uniformly $p$-integrable
    we deduce $\lim_{\beta\in \mathbb
      B}\int\|x\|_\X^p\,\d\mmu_{\alpha(\beta)}=
    \int\|x\|_\X^p\,\d\mmu$. Applying the previous claim, we deduce that
    $\mmu_{\alpha(\beta)}\to \mmu$ w.r.t.~$\wcP pq\PX$.
    In order to prove the metrizability we observe that the bounded
    distance on $\Y$
    \begin{equation}
      \label{eq:1b}
      \sfd_\varpi (y_1,y_2):=\sum_{n=1}^\infty 2^{-n}(|\la y_1-y_2,\sfe_n\ra|\land 1)\quad
      \text{where $(\sfe_n)_{n\in \N}$ is dense in the unit ball of $\Y'$,}
    \end{equation}
    induces a coarser topology than $\sigma(\Y,\Y')$ in $\Y$, so that
    the $L_1$-Wasserstein distance associated to
    \begin{displaymath}
      \sfd((x_1,y_1),(x_2,y_2)):=|x_1-x_2|_\X+\sfd_\varpi(y_1,y_2)
    \end{displaymath}
    induces a coarser topology than the topology of $\wcP pq\PX$, which on the
    other hand coincides
    with the $\wcP pq\PX$-topology on the compact set $\mathcal
    K_c$. $\mathcal K_c$ is therefore metrizable.
  \end{proof}
  It is worth noticing that the topology of $\wcP pp \PX$ is
  strictly coarser than 
  the Wasserstein topology of $\cP_p(\X\times \Y)$ even when $\Y$ is finite
  dimensional. In fact, $\testsw pp\PX$ does not contain
  the function $(x,y)\mapsto \|y\|_{\Y}^p$, so that convergence of
  the $p$-moment w.r.t.~$y$ is not guaranteed.

  The previous construction is useful also in the case of a
  single space $\Y$ (we may think that $\X$ reduces to $\{0\}$).
  \begin{definition}[The topology of $\cP_q^w(\Y)$]
    \label{def:weak}
    Let $\Y$ be a reflexive and separable Banach space
    and $q\in (1,+\infty)$.
    \begin{enumerate}
    \item $\rmC_q^w(\Y)$ is the Banach space of sequentially weakly
      continuous (continuous, if $\Y$ is finite dimensional)
      functions $\zeta:\Y\to\R$ satisfying
      \begin{equation}
        \label{eq:49}
        \forall\,\eps>0\ \exists\,A_\eps\ge0:\quad
        |\zeta(y)|\le A_\eps+\eps\|y\|_{\Y}^q\quad\text{for every }y\in \Y,
      \end{equation}
      or, equivalently, 
      $\displaystyle
      \lim_{\|y\|_{\Y}\to\infty}\frac{\zeta(y)}{1+\|y\|_{\Y}^q}=0$,
      endowed with the norm
      \begin{equation}
        \label{eq:50}
        \|\zeta\|_{\rmC_q^w(\Y)}:=
        \sup_{y\in \Y}\frac{|\zeta(y)|}{1+\|y\|_{\Y}^q}.
      \end{equation}
    \item $\cP_q^w(\Y)$ is the topological space
      of measures in $\cP_q(\Y)$ endowed with
      the initial topology  $\sigma\big(\cP_q(\Y),\rmC_q^w(\Y)\big)$
      (or, equivalently, 
      the weak$^*$ topology of $\big(\rmC_q^w(\Y)\big)'$).
    \end{enumerate}
    \end{definition}
    The following result is an immediate consequence of
    Proposition \ref{prop:finalmente}.
    
    \begin{corollary}
      \label{cor:finalmente}
      Let $\Y$ be a reflexive and separable Banach space and $q\in (1,+\infty)$.
      \begin{enumerate}
      \item 
      The topology of $\cP_q^w(\Y)$ is finer than the narrow
      topology of $\cP(\Y_w)$;
      they coincide on bounded subsets $\mathcal K$ of $\cP_q(\Y),$ 
      i.e.~satisfying
      \begin{equation}
      \label{eq:bounded}
      \displaystyle \sup_{\mu\in \mathcal K}\int
        \|y\|_{\Y}^q\,\d\mu<\infty.
      \end{equation}
      \item a sequence $(\mu_n)_{n\in \N}$ converges to $\mu$ in
        $\cP_q^w(\Y)$ if and only if
      $$\displaystyle
      \text{$(\mu_n)_{n\in \N}$ converges narrowly in $\cP(\Y_{w})$ 
      \quad and} \quad \sup_{n\in
        \N}\int \|y\|_{\Y}^q\,\d\mu_n(y)<\infty.$$
    \item a set $\mathcal K\subset \cP_q(\Y)$ is relatively
      sequentially compact in $\cP_q^w(\Y)$ if and only if
      it satisfies
      \eqref{eq:bounded}.
      \item If a sequence $(\mmu_n)_{n\in \N}$ in $\wcP pq\PX$ converges to
            $\mmu$ in $\wcP pq\PX$ then
        $\pi^2_\sharp \mmu_n\to \pi^2_\sharp\mmu$ in $\cP_q^w(\Y)$
        (and $\pi^1_\sharp \mmu_n\to \pi^1_\sharp\mmu$ in
        $\cP_p(\X)$).
      \end{enumerate}
    \end{corollary}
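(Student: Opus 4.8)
The plan is to observe that $\cP_q^w(\Y)$ is exactly the space $\wcP pq\PX$ in the degenerate case $\X=\{0\}$, where the first factor carries only the Dirac mass $\delta_0$ and the term $\|x\|_\X^p$ vanishes identically. Under this identification each of the four assertions reduces to a specialization of Proposition~\ref{prop:finalmente}: hypothesis (ii) there becomes automatic, and the first marginal $\pi^1_\sharp\mmu\equiv\delta_0$ is constant, hence trivially ranges in the compact set $\{\delta_0\}\subset\cP_p(\X)$. So the whole corollary is bookkeeping, once one is careful to invoke the right direction (net versus sequence) of the proposition.

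First I would prove (a). For the comparison of topologies, every $f\in\rmC_b(\Y_w)$ is bounded and continuous, hence sequentially weakly continuous and satisfying \eqref{eq:49}, so $\rmC_b(\Y_w)\subset\rmC_q^w(\Y)$; since the narrow topology of $\cP(\Y_w)$ is the initial topology generated by $\rmC_b(\Y_w)$, the topology of $\cP_q^w(\Y)$ is finer. For the coincidence on a bounded set $\mathcal K$, it suffices to check that a net in $\mathcal K$ converging narrowly to some $\mmu\in\mathcal K$ also converges in $\cP_q^w(\Y)$: this is precisely Proposition~\ref{prop:finalmente}(a) with $\X=\{0\}$, since (i) is the assumed narrow convergence, (ii) is vacuous, and (iii) is the boundedness $\sup_{\mu\in\mathcal K}\int\|y\|_\Y^q\,\d\mu<\infty$. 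Together with the finer inclusion, the two topologies then induce the same convergent nets on $\mathcal K$, hence coincide there.

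Statement (b) is the sequential version: the implication $\Leftarrow$ is again Proposition~\ref{prop:finalmente}(a), with (i)--(iii) holding exactly as above, while $\Rightarrow$ is the converse (sequential) assertion of the same proposition, which returns narrow convergence (i) and the uniform $q$-moment bound (iii). For (c), if \eqref{eq:bounded} holds with constant $c$, then $\mathcal K$ is contained in the set $\mathcal K_c=\{\mu:\int\|y\|_\Y^q\,\d\mu\le c\}$, which by Proposition~\ref{prop:finalmente}(b) (taking $\{\delta_0\}$ as the compact set in the first factor) is sequentially compact in $\cP_q^w(\Y)$; hence $\mathcal K$ is relatively sequentially compact. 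Conversely, if \eqref{eq:bounded} fails there is a sequence in $\mathcal K$ whose $q$-moments diverge, and by (b) no subsequence can converge in $\cP_q^w(\Y)$, contradicting relative sequential compactness.

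Finally, for (d) I would apply the converse (sequential) part of Proposition~\ref{prop:finalmente}(a) to $\mmu_n\to\mmu$ in $\wcP pq\PX$, obtaining narrow convergence in $\cP(\PXtop)$ together with $\int\|x\|_\X^p\,\d\mmu_n\to\int\|x\|_\X^p\,\d\mmu$ and $\sup_n\int\|y\|_\Y^q\,\d\mmu_n<\infty$. Since $\pi^1,\pi^2$ are continuous from $\PXtop$ onto $\X_s$ and $\Y_w$, narrow convergence passes to the marginals; for $\pi^2_\sharp\mmu_n$ this narrow convergence plus the uniform $q$-moment bound yields convergence in $\cP_q^w(\Y)$ by part (b), while for $\pi^1_\sharp\mmu_n$ the narrow convergence together with convergence of the $p$-moments gives $W_p$-convergence in $\cP_p(\X)$ via Proposition~\ref{prop:carattconvWp}. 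The only genuinely delicate point is this last step: narrow convergence of the first marginals alone is insufficient, and one must combine it with the moment convergence (ii) to upgrade to convergence in $\cP_p(\X)$.
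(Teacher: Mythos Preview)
Your proposal is correct and matches the paper's intent: the paper simply states that the corollary is ``an immediate consequence of Proposition~\ref{prop:finalmente}'' and gives no further argument, and what you have written is exactly the bookkeeping needed to make that reduction explicit, treating $\cP_q^w(\Y)$ as the case $\X=\{0\}$. The only minor remark is that in (d) you invoke Proposition~\ref{prop:carattconvWp} for the first marginal via ``narrow convergence plus convergence of $p$-moments'', whereas that proposition is phrased in terms of uniform $p$-integrability; the two formulations are of course equivalent (and the paper itself uses this implication without comment inside the proof of Proposition~\ref{prop:finalmente}(a)), so no change is needed.
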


\begin{remark}
  \label{rem:reflexive-case}
  All the definitions and results
  of this Section can be easily extended
  to the case when $\Y_w$ is replaced by
  the dual $\Y'_{w*}$ of a separable Banach space
  endowed with its weak$^*$-topology and
  we deal with the product
  $\X_s\times \Y_{w*}'$.
  We could therefore consider the spaces $\cP_{pq}^{sw*}(\X\times \Y')$
  and $\cP_q^{w*}(\Y')$. 
\end{remark}
Let us conclude this section with a useful application of
the weak topology of Definition \ref{def:topology} to the
stability of optimal plans in Hilbert spaces.
\begin{theorem}
  \label{thm:chefatica}
  Let $\H$ be a separable Hilbert space, let $(\mu^i_n)_{n\in \N}$
  be two sequences in $\cP_2(\H)$, $i=1,2$ with $\ggamma_n\in
  \Gamma_o(\mu^1_n,\mu^2_n)$, and let $\mu^1,\mu^2\in \cP_2(\H)$
  with $\ggamma\in \Gamma(\mu^1,\mu^2)$.
  If
  \begin{equation}
    \label{eq:10}
    (\mu^2_n)\text{ is tight in }\cP(\H_w)\quad\text{and}\quad
    \ggamma_n\to \ggamma\text{ narrowly in }\cP(\H_s\times \H_w)\text{ as
      $n\to\infty,$}
  \end{equation}
  then $\mu^1_n\to\mu^1$ narrowly in $\cP(\H)$, $\mu^2_n\to\mu^2$ narrowly in
  $\cP(\H_w)$
  and $\ggamma\in \Gamma_o(\mu^1,\mu^2).$
  In particular, any limit point $\ggamma$ of optimal plans $\ggamma_n$ in $\wcP
  22{\H\times \H}$ 
  is optimal as well.
\end{theorem}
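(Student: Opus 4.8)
The plan is to avoid any direct passage to the limit of the quadratic cost — which fails, since the term $|y|^2$ is only weakly lower semicontinuous — and instead to transfer the \emph{cyclical monotonicity} of the supports, exploiting that the scalar product, the only genuinely bilinear part of the cost, is sequentially continuous for the strong/weak convergence of its factors.

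First I would settle the two marginal statements and a tightness bound. The projections $\pi^1\colon\H_s\times\H_w\to\H_s$ and $\pi^2\colon\H_s\times\H_w\to\H_w$ are continuous, so pushing forward the narrow convergence $\ggamma_n\to\ggamma$ gives at once $\mu^1_n=\pi^1_\sharp\ggamma_n\to\mu^1$ narrowly in $\cP(\H)$ and $\mu^2_n=\pi^2_\sharp\ggamma_n\to\mu^2$ narrowly in $\cP(\H_w)$. Since $\H_s$ is Polish, the convergence of $(\mu^1_n)$ forces this sequence to be tight in $\cP(\H_s)$ by Prokhorov's Theorem \ref{thm:prok}; together with the assumed tightness of $(\mu^2_n)$ in $\cP(\H_w)$ and the fact that the product of a strongly compact and a weakly compact set is compact in $\H_s\times\H_w$, this yields tightness of $(\ggamma_n)$ in $\cP(\H_s\times\H_w)$.

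Next I would exploit the characterization of optimality for the quadratic cost: a plan with marginals in $\cP_2(\H)$ is optimal if and only if it is concentrated on a cyclically monotone set, and, the purely quadratic terms cancelling in every cyclic sum, cyclical monotonicity of a set $G$ amounts to
\begin{equation*}
\sum_{j=1}^{N}\langle x_j,y_j\rangle\ \ge\ \sum_{j=1}^{N}\langle x_j,y_{j+1}\rangle\qquad\text{for all }(x_1,y_1),\dots,(x_N,y_N)\in G,\ \ y_{N+1}:=y_1 .
\end{equation*}
Since each $\ggamma_n$ is optimal with marginals in $\cP_2(\H)$, its support is cyclically monotone (see \cite{AGS08}); equivalently, setting $\Phi\big((x_j,y_j)_{j=1}^N\big):=\sum_j\langle x_j,y_j\rangle-\sum_j\langle x_j,y_{j+1}\rangle$ and $C_N:=\{\Phi\ge 0\}\subset(\H\times\H)^N$, we have $\ggamma_n^{\otimes N}(C_N)=1$ for every $N$. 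The crucial structural point is that $(x,y)\mapsto\langle x,y\rangle$ is sequentially continuous on $\H_s\times\H_w$ — indeed it belongs to $\testsw 22{\H\times\H}$ — so $\Phi$ is sequentially continuous and $C_N$ is sequentially closed.

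Finally I would pass to the limit. By tightness of $(\ggamma_n)$, the products $\ggamma_n^{\otimes N}$ are tight and converge narrowly to $\ggamma^{\otimes N}$ in $\cP\big((\H_s\times\H_w)^N\big)$. On the weakly compact sets of the form $(K^1_\eps\times K^2_\eps)^N$, which uniformly carry almost all the $\ggamma_n^{\otimes N}$-mass, the strong/weak topology is metrizable (cf.~the distance $\sfd$ in the proof of Proposition \ref{prop:finalmente}), so there $\Phi$ is genuinely continuous and $C_N$ is closed; a portmanteau estimate on these sets and the limit $\eps\to0$ then give $\ggamma^{\otimes N}(C_N)=1$ for every $N$. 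By the standard construction underlying the cyclical monotonicity criterion \cite{AGS08}, these relations imply that $\ggamma$ is concentrated on a single cyclically monotone Borel set; since $\ggamma\in\Gamma(\mu^1,\mu^2)$ with $\mu^1,\mu^2\in\cP_2(\H)$, the converse implication of the criterion yields $\ggamma\in\Gamma_o(\mu^1,\mu^2)$. For the final assertion, if $\ggamma_n\to\ggamma$ in $\wcP 22{\H\times\H}$ then Proposition \ref{prop:finalmente} gives narrow convergence in $\cP(\H_s\times\H_w)$ together with $\sup_n\int |y|^2\,\d\ggamma_n<\infty$, and the latter bound makes $(\mu^2_n)$ tight in $\cP(\H_w)$ (weakly compact balls and a Chebyshev estimate), so the hypotheses are met. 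The main obstacle is precisely this loss of continuity of the cost under weak convergence in the second variable; the argument circumvents it by encoding optimality through the scalar product alone, at the price of the measure-theoretic care needed to treat the sequentially closed sets $C_N$ in the non-metrizable space $\H_w$ via the tightness-induced metrizability on weakly compact sets.
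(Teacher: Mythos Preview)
Your proof is correct and follows a genuinely different route from the paper's. Both arguments reduce optimality of $\ggamma$ to cyclical monotonicity and must overcome the same obstacle --- the weak topology on the second factor is not metrizable, so sequential closedness of the cyclical-monotonicity constraint does not immediately give closedness. The paper handles this \emph{pointwise}: it introduces an auxiliary tightness function $\psi_2$ with weakly compact sublevels, pushes $\ggamma_n$ forward to $\ssigma_n=(\mathrm{Id}\times\psi_2)_\sharp\ggamma_n$ on $\H^2\times[0,\infty)$, passes to a limit in the metrizable space $\H_s\times\H_\varpi\times\R$, and then for each finite tuple in (a $\sigma$-compact carrier of) $\supp\ggamma$ produces approximating tuples in $\supp\ggamma_n$ whose $\psi_2$-values stay bounded, upgrading $\sfd_\varpi$-convergence to weak convergence so that the scalar-product inequality survives the limit. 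Your argument is \emph{global and measure-theoretic}: you encode $N$-cyclical monotonicity as $\ggamma_n^{\otimes N}(C_N)=1$, pass to the limit by a portmanteau estimate on the compact (hence metrizable) sets $(K^1_\eps\times K^2_\eps)^N$ where $C_N$ is genuinely closed, and then read off cyclical monotonicity of $\supp\ggamma$ from $\ggamma^{\otimes N}(C_N)=1$ together with the (strong) closedness of $C_N$. Your approach is closer to the standard stability proofs in Polish settings and avoids the auxiliary $\psi_2$-construction at the cost of invoking narrow convergence of product measures and the portmanteau inequality for compact sets in completely regular spaces; the paper's approach is more hands-on and entirely sequential, which keeps the measure theory minimal.
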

\begin{proof}
  The statement concerning the convergence of $\mu^1_n$ and $\mu^2_n$
  is obvious.
  Since $\ggamma$ has finite quadratic moment, in order to check its
  optimality
  it is sufficient to prove that $\ggamma$ is concentrated
  on a cyclically monotone set,
  i.e.~there exists a Borel set $M\subset \H\times \H$
  such that $\ggamma(\H^2\setminus M)=0$ and
  for every $N\in \N$
  \begin{equation}
    \label{eq:11}
    (x_1^k,x_2^k)\in M,\ k=0,\cdots,N,\ 
    \text{with $(x_1^0,x_2^0)=(x_1^N,x_2^N)$}\quad\Rightarrow
    \quad
    \sum_{k=1}^N \langle x_1^k-x_1^{k-1},x_2^k\rangle\ge0.
  \end{equation}
  The standard idea, i.e.~using the convergence of the supports of
  $\ggamma_n$, should be adapted to the
  case of the (non-metrizable) weak topology of $\H$.
  We thus consider also the metric space $(\H_\varpi,\sfd_\varpi)$,
  whose metric has been defined by \eqref{eq:1b} (here $\Y=\H)$;
  we recall that the topology induced by $\sfd_\varpi$ coincides with
  the weak topology on every bounded subset of $\H$.

  Since $(\mu^1_n)_n$ is narrowly convergent in $\cP(\H)$
  it is tight, so that
  we can find a function
  $\psi_1:\H\to[0,+\infty]$
  with strongly compact sublevels such that
  $\int_\H \psi_1(x)\,\d\mu^1_n(x)\le S_1<\infty$
  for every $n\in \N$.
  Since $(\mu^2_n)$ is tight in $\cP(\H_w)$ we can find a function
  $\psi_2:\H\to[0,+\infty]$ with weakly compact sublevels such that
  $\int_\H \psi_2(x)\,\d\mu^2_n(x)\le S_2<\infty$ for every $n\in \N$.
  Let us set $\ssigma_n:=(\operatorname{Id}_{\H\times\H}\times
  \psi)_\sharp\ggamma_n\in \cP(\H^2\times [0,+\infty)).$
  We have that
  \begin{equation}
    \label{eq:12}
    \int\Big(\psi_1(x_1)+\psi_2(x_2)+|r|)\,\d\ssigma_n(x_1,x_2,r)
    \le S_1+2S_2
  \end{equation}
  so that the sequence $(\ssigma_n)_{n\in\N}$
  is tight in $\cP(\H\times \H_\varpi\times \R)$
  (recall \eqref{eq:1b}).
  Since $\H_s\times \H_\varpi\times \R$ is metrizable, 
  we can thus extract a subsequence (still denoted by $\ssigma_n$)
  converging to a limit plan $\ssigma\in \cP(\H\times \H\times \R)$
  such that $\pi^{12}_\sharp\ssigma=\ggamma$.

  Since $\ssigma$ is a Radon probability measure in $\H^2\times \R$,
  we can find
  an increasing sequence of compact sets
  $K_j\subset \supp(\ssigma)\subset \H^2\times \R$
  such that
  $\ssigma(\H^2\times \R\setminus \cup_j K_j)=0$.
  It follows that $\ggamma$ is concentrated
  on $M:=\cup_j M_j$
  where $M_j:=\pi^{12}\big(K_j)$ are compact sets.

  Let now $(x_1^k,x_2^k)$, $k=0,\cdots,N$, be points in $M$
  as in \eqref{eq:11}. There exists $j\in \N$ and
  points $r^k\ge0$ such that
  $(x_1^k,x_2^k,r^k)\in K_j$.
  Since
  $\ssigma_n$ is concentrated
  on $(\operatorname{Id}_{\H^2}\times \psi)(\supp(\ggamma_n))$,
  we can thus find
  a sequence $(x_{1,n}^k,x_{2,n}^k)\in \supp(\ggamma_n)$
  such that
  \begin{equation}
    \label{eq:13}
    x_{1,n}^k\to x_1^k\text{strongly in }\H,\
    \sfd_\varpi(x_{2,n}^k,x_2^k)\to 0,\
    \psi_2(x_{2,n}^k)\to r^k\text{ in $\R$ as $n\to\infty$}.
  \end{equation}
  Since $\psi_2$ has weakly compact sublevels,
  we deduce that $x_{2,n}^k\weakto x_2^k$
  as $n\to\infty$. Since $\ggamma_n$ is cyclically monotone,
  we know that
  \begin{equation}
    \label{eq:14}
    \sum_{k=1}^N \langle x_{1,n}^k-x_{1,n}^{k-1},x_{2,n}^k\rangle\ge0
    \quad\text{for every }n\in \N.
  \end{equation}
  We can then pass to the limit as $n\to\infty$ in \eqref{eq:14}
  and using the sequential continuity of the scalar product
  in $\H\times \H_w$ we obtain \eqref{eq:11}.  
\end{proof}

\section{Weak lower semicontinuity of geodesically convex functions in
$\cP_2(\H)$}
\label{sec:wlsc}
Let $(\H,|\cdot|)$ be a separable Hilbert space and
let $\varphi:\H\to \R\cup\{+\infty\}$ be a convex function.
It is well known that $\varphi$ is lower semicontinuous w.r.t.~the
strong topology of $\H$ if and only if it is lower semicontinuous
w.r.t.~the weak topology.
We want to extend this property to
geodesically convex functions
$\phi:\cP_2(\H)\to
\R\cup\{+\infty\}$, an important class
of functions introduced by McCann \cite{McCann97}.

Let us first recall that a (minimal, constant speed)
geodesic $(\mu_s)_{s\in [0,1]}$ in $\cP_2(\H)$ connecting two given measures
$\mu_0,\mu_1\in \cP_2(\H)$
is a Lipschitz curve satisfying
\begin{equation}
  \label{eq:21}
  W_2(\mu_s,\mu_t)=|t-s|W_2(\mu_0,\mu_1)\quad\text{for every }s,t\in [0,1].
\end{equation}
Equivalently, it is possible to prove (see
e.g.~\cite{AGS08})
that a curve $(\mu_s)_{s\in [0,1]}$ is a geodesic
if and only if there exists an optimal plan
$\mmu\in \Gamma_o(\mu_0,\mu_1)$
such that 
\begin{equation}
  \label{eq:20}
  \mu_s:=(\pi^{1\to2}_s)_\sharp \mmu,\quad
  \pi^{1\to2}_s(x_1,x_2):=(1-s)x_1+sx_2\quad x_1,x_2\in \H,\ s\in [0,1].
\end{equation}
\begin{definition}
  Let $\phi:\cP_2(\H)\to
  \R\cup\{+\infty\}$ be a function with proper domain
  $D(\phi):=\{\mu\in \cP_2(\H):\phi(\mu)<\infty\}\neq \emptyset.$
  $\phi$ is geodesically convex
  if every $\mu_0,\mu_1\in D(\phi)$
  can be connected by a geodesic $(\mu_s)_{s\in [0,1]}$
  in $\cP_2(\H)$ 
  along which
  \begin{equation}
    \label{eq:22}
    \phi(\mu_s)\le (1-s)\phi(\mu_0)+s\phi(\mu_1)\quad\text{for every
    }s\in [0,1].
  \end{equation}
  Equivalently,
  there exists $\mmu\in \Gamma_o(\mu_1,\mu_2)$ such that
  \begin{equation}
    \label{eq:23}
    \phi\big((\pi^{1\to2}_s)_\sharp\mmu\big)
    \le (1-s)\phi(\mu_0)+s\phi(\mu_1)\quad\text{for every
    }s\in [0,1].
  \end{equation}
\end{definition}

\begin{theorem}
  \label{thm:main-wlsc}
  Every
  lower semicontinuous and 
  geodesically convex function $\phi: \cP_2(\H)\to
  \R\cup\{+\infty\}$ is sequentially lower semicontinuous
  w.r.t.~the (weak) topology of $\cPtHw$:
  for every sequence $(\mu_n)_{n\in \N}$ and $\mu$ in $\cP_2(\H)$ we have
  \begin{equation}
    \label{eq:8}
    \mu_n\to \mu \text{ narrowly in }\cP(\H_w),\quad
    \sup_n\int |x|^2\,\d\mu_n<\infty\quad
    \Rightarrow\quad
    \liminf_{n\to\infty}\phi(\mu_n)\ge \phi(\mu).    
  \end{equation}
\end{theorem}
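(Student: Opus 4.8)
The plan is to transplant to the Wasserstein setting the classical Hilbert-space proof that a convex lower semicontinuous functional is weakly lower semicontinuous, replacing Mazur's lemma by an averaging argument in the \virgolette{tangent} Hilbert space $L^2(\mu;\H)$ attached to the prospective limit $\mu$.

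First I would make a harmless reduction: if $\ell:=\liminf_n\phi(\mu_n)=+\infty$ there is nothing to prove, so I assume $\ell<+\infty$ and pass to a (not relabelled) subsequence with $\phi(\mu_n)\to\ell$ and $\sup_n\phi(\mu_n)<+\infty$. Using $\sup_n\int|x|^2\,\d\mu_n<\infty$, I choose optimal plans $\ggamma_n\in\Gamma_o(\mu,\mu_n)$, which have equibounded quadratic moments; their first marginal is the fixed measure $\mu$, while their second marginals $\mu_n$ have bounded second moments and are therefore tight in $\cP(\H_w)$, since balls of $\H$ are weakly compact (Proposition \ref{prop:1}). Theorem \ref{thm:chefatica} then forces every narrow limit point of the tight family $(\ggamma_n)$ to belong to $\Gamma_o(\mu,\mu)$, which consists only of the diagonal plan $(\id\times\id)_\sharp\mu$; since the first-marginal moment is constant and the second moments are bounded, Proposition \ref{prop:finalmente}(a) upgrades this to convergence $\ggamma_n\to(\id\times\id)_\sharp\mu$ in $\wcP22{\H\times \H}$.

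I would then read this convergence through barycentric projections. Disintegrating $\ggamma_n$ with respect to its first marginal $\mu$ and setting $t_n(x):=\int y\,\d(\ggamma_n)_x(y)$, Jensen's inequality gives $\sup_n\|t_n\|_{L^2(\mu;\H)}^2\le\sup_n\int|y|^2\,\d\mu_n<\infty$. Testing the convergence $\ggamma_n\to(\id\times\id)_\sharp\mu$ against the strong--weak continuous bilinear maps $(x,y)\mapsto g(x)\langle y,\xi\rangle$, which belong to $\testsw22{\H\times \H}$ by the Remark after \eqref{eq:32}, yields $\int g(x)\langle t_n(x),\xi\rangle\,\d\mu\to\int g(x)\langle x,\xi\rangle\,\d\mu$ for every bounded continuous $g$ and every $\xi\in\H$; as such functions are dense in $L^2(\mu;\H)$ and the $t_n$ are bounded, this means $t_n\weakto\id$ weakly in $L^2(\mu;\H)$. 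Now the Banach--Saks / Mazur property of the Hilbert space $L^2(\mu;\H)$ provides convex combinations $s_k=\sum_j\lambda^k_j\, t_{n_j}$ with $s_k\to\id$ strongly in $L^2(\mu;\H)$, whence $\nu_k:=(s_k)_\sharp\mu$ satisfies $W_2(\nu_k,\mu)\le\|s_k-\id\|_{L^2(\mu;\H)}\to0$, i.e.~$\nu_k\to\mu$ in $(\cP_2(\H),W_2)$. If one can prove the convexity estimate $\phi(\nu_k)\le\sum_j\lambda^k_j\phi(\mu_{n_j})$, then the strong lower semicontinuity of $\phi$ gives $\phi(\mu)\le\liminf_k\phi(\nu_k)\le\liminf_k\sum_j\lambda^k_j\phi(\mu_{n_j})=\ell$, which is the claim.

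The main obstacle is exactly this last convexity estimate, where the \emph{linear} averaging performed in $L^2(\mu;\H)$ must be matched with the available \emph{geodesic} convexity. The natural device is to glue the plans $(\ggamma_{n_j})_j$ along their common marginal $\mu$ (Lemma \ref{lem:givenmarginal}) into a single multiplan $\boldsymbol\Sigma$ and to recognise $\nu_k$ as the barycenter of the $\mu_{n_j}$ issued from the base point $\mu$, so that the desired inequality becomes convexity of $\phi$ along the generalized geodesic emanating from $\mu$. Two delicate points remain: first, reconciling this with the assumption of convexity along \emph{ordinary} geodesics only, which I would attempt by an induction interpolating along ordinary geodesics between the running barycenter and the next $\mu_{n_j}$ while verifying that the glued couplings stay optimal; second, the discrepancy between the genuine barycenter $(\sum_j\lambda^k_j\pi^j)_\sharp\boldsymbol\Sigma$ and the pushforward $(s_k)_\sharp\mu$ of the averaged barycentric projections, which coincide precisely when the optimal couplings with $\mu$ are induced by maps and otherwise must be compared through a conditional-Jensen argument. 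Controlling these two gaps — matching the object that converges strongly with the object that convexity bounds — is where the real work of the proof lies.
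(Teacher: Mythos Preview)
Your proposal has a genuine gap, and you correctly locate it: the convexity estimate $\phi(\nu_k)\le\sum_j\lambda^k_j\phi(\mu_{n_j})$ cannot be extracted from geodesic convexity. The averaging you perform is \emph{linear} in $L^2(\mu;\H)$, whereas the hypothesis controls $\phi$ only along displacement interpolations. Even in the two-point case, $(\lambda_1 t_{n_1}+\lambda_2 t_{n_2})_\sharp\mu$ is a point on a \emph{generalized} geodesic based at $\mu$, not on an ordinary geodesic between $\mu_{n_1}$ and $\mu_{n_2}$; your induction \virgolette{interpolate the running barycenter with the next $\mu_{n_j}$} therefore produces a sequence of measures that has nothing to do with $\nu_k$ and for which you still only know convexity along \emph{some} geodesic, not the one your construction dictates. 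Upgrading to convexity along generalized geodesics does not save the argument either: Definition~\ref{def:cgen} only guarantees convexity along \emph{some} plan with optimal $1$--$2$ and $1$--$3$ marginals, not along the glued plan you build, and the conditional-Jensen idea goes the wrong way (it would compare $\phi$ of the true multi-marginal barycenter with $\phi(\nu_k)$, but you need the opposite inequality). Finally, the object for which you \emph{can} prove $W_2$-convergence to $\mu$ is $(s_k)_\sharp\mu$, while the object that convexity might bound is $(\sum_j\lambda^k_j\pi^j)_\sharp\boldsymbol\Sigma$; for the latter you have no better control on $W_2(\cdot,\mu)$ than the trivial bound $\sum_j\lambda^k_j W_2^2(\mu,\mu_{n_j})$, which does not tend to zero.

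The paper bypasses Mazur entirely. It uses an Ekeland/Moreau--Yosida type approximation (Theorem~\ref{thm:MS}) to produce, for each $\tau>0$, a point $\mu_\tau$ close to $\mu$ together with a \emph{subgradient-like} inequality (Lemma~\ref{le:step1}):
\[
\phi(\mu_n)\ge\phi(\mu_\tau)+\frac1\tau\int\langle x_1-x_2,x_1-x_3\rangle\,\d\ggamma_{\tau,n}-W_2(\mu_\tau,\mu)\big(W_2(\mu_\tau,\mu)+S\big),
\]
where $\ggamma_{\tau,n}$ glues an optimal $\mmu_\tau\in\Gamma_o(\mu_\tau,\mu)$ with an optimal coupling to $\mu_n$. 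The bilinear integrand belongs to $\testsw22{\H^2\times\H}$, so one passes to the limit in $n$ via Proposition~\ref{prop:finalmente}; Theorem~\ref{thm:chefatica} forces the limiting $\pi^{2,3}$-marginal to be diagonal, turning the integral into $W_2^2(\mu_\tau,\mu)$. Letting $\tau\downarrow0$ and using $\phi(\mu_\tau)\to\phi(\mu)$ finishes the proof. The key point is that this route never needs a multi-point Jensen inequality: geodesic convexity enters only through the two-point slope estimate of Lemma~\ref{le:step1}.
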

The proof of Theorem \ref{thm:main-wlsc}
(at the end of the present section) is
based on two preliminary results; 
the first one is an application to the Wasserstein space
of \cite[Theorem 2.10, 2.17]{Muratori-Savare20},
which hold in fact in an arbitrary complete metric space.
\begin{theorem}
\label{thm:MS}
Let $\phi: \cP_2(\H)\to
  \R\cup\{+\infty\}$ be a proper, lower semicontinuous, and 
  geodesically convex function.

  - $\phi$ is linearly bounded from below:
    there exists $\mu_o\in \cP_2(\H)$ and $\ell_o,\phi_o\in \R$ such that 
    \begin{equation}
      \label{eq:5}
      \phi(\mu)\ge \phi_o-\ell_o\,W_2(\mu,\mu_o)\quad
      \text{for every }\mu\in \cP_2(\H).
    \end{equation}
  - 
    For every $\mu\in D(\phi)$ and $\tau>0$
    there exists $\mu_\tau\in D(\phi)$ such that 
    \begin{align}
      \label{eq:6}
      \frac 1{2\tau}W_2^2(\mu_\tau,\mu)+\phi(\mu_\tau)&\le 
      \frac 1{2\tau}W_2^2(\mu,\nu)+\phi(\nu)+
                                                        W_2(\mu_\tau,\mu)W_2(\mu_\tau,\nu)
      \quad\text{for every }\nu\in D(\phi),\\
      \label{eq:6bis}
      \frac 1{2\tau}W_2^2(\mu_\tau,\mu)+\phi(\mu_\tau)&\le 
                                                        \phi(\mu)\\
      \label{eq:28}
      \lim_{\tau\downarrow0}W_2(\mu_\tau,\mu)&=0,\quad
      \lim_{\tau\downarrow0}\phi(\mu_\tau)=\phi(\mu).
    \end{align}
\end{theorem}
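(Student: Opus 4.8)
The plan is to read Theorem~\ref{thm:MS} as the specialization to $(\cP_2(\H),W_2)$ of the abstract metric-space results \cite[Theorems 2.10 and 2.17]{Muratori-Savare20}, thereby reducing the proof to a verification of their hypotheses. By Proposition~\ref{prop:carattconvWp}, $(\cP_2(\H),W_2)$ is a complete metric space; it is moreover geodesic, its constant-speed geodesics being the displacement interpolations \eqref{eq:20}, and $\phi$ is proper, lower semicontinuous and geodesically convex in the sense of \eqref{eq:22} by hypothesis. These are exactly the structural assumptions under which the cited results hold, so the statement follows. For completeness, and because the same construction underlies the applications developed later, I indicate the underlying mechanism.

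The common engine of both parts is the Moreau--Yosida resolvent. Fixing $\mu\in D(\phi)$ and $\tau>0$, I would study the functional $F_\tau(\nu):=\tfrac1{2\tau}W_2^2(\mu,\nu)+\phi(\nu)$ and produce $\mu_\tau$ as an (approximate) minimizer. Since closed balls of $(\cP_2(\H),W_2)$ are not compact, the direct method is unavailable, and one instead applies Ekeland's variational principle, started at $\mu$ itself: this yields $\mu_\tau\in D(\phi)$ with $F_\tau(\mu_\tau)\le F_\tau(\mu)=\phi(\mu)$ — which is exactly \eqref{eq:6bis} — together with an almost-minimality inequality $F_\tau(\mu_\tau)\le F_\tau(\nu)+\delta\,W_2(\mu_\tau,\nu)$ valid for every $\nu$. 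Testing this along the geodesic from $\mu_\tau$ to an arbitrary $\nu\in D(\phi)$ and inserting the geodesic convexity \eqref{eq:22} of $\phi$ turns the crude inequality into the sharp form \eqref{eq:6}; the slack term $W_2(\mu_\tau,\mu)W_2(\mu_\tau,\nu)$ on its right-hand side both absorbs the Ekeland error (the slope parameter being tuned so that $\delta\le W_2(\mu_\tau,\mu)$) and compensates for the absence of the parallelogram identity for $W_2^2$ along geodesics that a Hilbert structure would supply.

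Part (a) is then a consequence of the resolvent carrying a finite descending slope, $|\partial\phi|(\mu_\tau)\le W_2(\mu_\tau,\mu)/\tau<\infty$. Writing $\mu_o:=\mu_\tau$ and running \eqref{eq:22} along the geodesic $(\mu_s)$ from $\mu_o$ towards an arbitrary $\mu$, the convexity inequality gives $\phi(\mu)-\phi(\mu_o)\ge s^{-1}\big(\phi(\mu_s)-\phi(\mu_o)\big)$ for every $s$, and letting $s\downarrow0$ the right-hand side is bounded below, by definition of the slope, by $-|\partial\phi|(\mu_o)\,W_2(\mu,\mu_o)$; this is precisely \eqref{eq:5}, with $\phi_o=\phi(\mu_o)$ and $\ell_o=|\partial\phi|(\mu_o)$. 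Finally the limits \eqref{eq:28} follow by feeding \eqref{eq:5} back into \eqref{eq:6bis}: from $\tfrac1{2\tau}W_2^2(\mu_\tau,\mu)\le \phi(\mu)-\phi(\mu_\tau)\le \phi(\mu)-\phi_o+\ell_o W_2(\mu_\tau,\mu_o)$ and the triangle inequality one obtains $W_2(\mu_\tau,\mu)\to0$ as $\tau\downarrow0$, whence $\limsup_{\tau\downarrow0}\phi(\mu_\tau)\le\phi(\mu)$ from \eqref{eq:6bis} and $\liminf_{\tau\downarrow0}\phi(\mu_\tau)\ge\phi(\mu)$ from lower semicontinuity.

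The main obstacle is concentrated in the very first step. Because $(\cP_2(\H),W_2)$ is not locally compact, the infimum of $F_\tau$ need not be attained and the whole argument must be run with Ekeland-type metric tools rather than by compactness; more delicately, applying Ekeland presupposes that $F_\tau$ is bounded below, a coercivity property which, for a merely geodesically convex $\phi$, is itself entangled with the finite-slope/linear-lower-bound statement of part (a). Disentangling this — constructing the resolvent and the finite-slope reference point simultaneously, so that no step is logically circular — is the heart of the matter; once a single point of finite slope is secured, everything else reduces to routine manipulations of minimality, geodesic convexity and the triangle inequality.
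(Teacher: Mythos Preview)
Your proposal is correct and matches the paper's treatment: the paper does not prove Theorem~\ref{thm:MS} but simply invokes \cite[Theorems 2.10, 2.17]{Muratori-Savare20} in the complete geodesic space $(\cP_2(\H),W_2)$, which is exactly your opening reduction. Your additional sketch of the Ekeland-based resolvent construction and your identification of the coercivity/finite-slope entanglement as the genuine technical obstacle go beyond what the paper records but accurately reflect the content of the cited reference.
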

\begin{lemma}
  \label{le:step1}
  Let $\tau>0$, $\mu,\mu_\tau$ as in \eqref{eq:6} and \eqref{eq:6bis} of Theorem 
  \ref{thm:MS} and let $\mmu_\tau\in \Gamma_o(\mu_\tau,\mu)$. 
  For every $\nu\in D(\phi)$
  and $\ggamma_\tau\in \Gamma(\mmu_\tau,\nu)$
  such that $\pi^{13}_\sharp  \ggamma_\tau\in \Gamma_o(\mu_\tau,\mu)$
  and $\phi$
  satisfies the convexity inequality \eqref{eq:22}
  along $(\pi^{1\to3}_s)_\sharp\ggamma$,
  we have
  \begin{equation}
    \label{eq:7}
    \phi(\nu)-\phi(\mu_\tau)\ge
    \frac 1{\tau }\int
    \big\langle x_1-x_2,x_1-x_3\big\rangle \,\d\ggamma_\tau
    -W_2(\mu_\tau,\mu)W_2(\mu_\tau,\nu).
  \end{equation}
\end{lemma}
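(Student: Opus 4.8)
The plan is to convert the approximate minimality of $\mu_\tau$ recorded in \eqref{eq:6} into a first-order, subdifferential-type inequality by perturbing $\mu_\tau$ along the geodesic toward $\nu$ and differentiating at the initial time. First I would introduce the constant-speed curve $\mu_s := (\pi^{1\to3}_s)_\sharp \ggamma_\tau$, $s\in[0,1]$, joining $\mu_0=\mu_\tau$ to $\mu_1=\nu$; by the characterization \eqref{eq:20} this is a genuine geodesic precisely because $\pi^{13}_\sharp\ggamma_\tau$ is an optimal coupling of $\mu_\tau$ and $\nu$. Along it the assumed convexity inequality \eqref{eq:22} gives $\phi(\mu_s)\le (1-s)\phi(\mu_\tau)+s\phi(\nu)$, which in particular shows $\mu_s\in D(\phi)$ for every $s$, so that \eqref{eq:6} may be applied with $\nu$ replaced by $\mu_s$. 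Combining the two, cancelling the common term $\tfrac1{2\tau}W_2^2(\mu_\tau,\mu)$ and the $(1-s)\phi(\mu_\tau)$ contribution, I obtain for every $s\in(0,1]$
\begin{equation*}
  s\big(\phi(\mu_\tau)-\phi(\nu)\big)\le
  \frac1{2\tau}\big(W_2^2(\mu,\mu_s)-W_2^2(\mu_\tau,\mu)\big)
  +W_2(\mu_\tau,\mu)\,W_2(\mu_\tau,\mu_s).
\end{equation*}

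Next I would estimate the two geodesic quantities. Since $(\mu_s)$ has constant speed and $\mu_0=\mu_\tau$, one has $W_2(\mu_\tau,\mu_s)=s\,W_2(\mu_\tau,\nu)$, so after dividing by $s$ the last term contributes exactly $W_2(\mu_\tau,\mu)W_2(\mu_\tau,\nu)$. The heart of the argument is the one-sided control of the difference quotient of $s\mapsto W_2^2(\mu,\mu_s)$: using the (generally non-optimal) competitor coupling $(\pi^2,\pi^{1\to3}_s)_\sharp\ggamma_\tau\in\Gamma(\mu,\mu_s)$ and expanding $x_2-(1-s)x_1-sx_3=(x_2-x_1)+s(x_1-x_3)$, I get
\begin{equation*}
  W_2^2(\mu,\mu_s)\le
  \int|x_1-x_2|^2\,\d\ggamma_\tau
  +2s\int\langle x_2-x_1,x_1-x_3\rangle\,\d\ggamma_\tau
  +s^2\int|x_1-x_3|^2\,\d\ggamma_\tau.
\end{equation*}
The crucial point is that the zeroth-order term is \emph{exact}: since $\pi^{12}_\sharp\ggamma_\tau=\mmu_\tau\in\Gamma_o(\mu_\tau,\mu)$, we have $\int|x_1-x_2|^2\,\d\ggamma_\tau=W_2^2(\mu_\tau,\mu)$, which cancels against $-W_2^2(\mu_\tau,\mu)$ and leaves the difference quotient bounded above by $2\int\langle x_2-x_1,x_1-x_3\rangle\,\d\ggamma_\tau+s\int|x_1-x_3|^2\,\d\ggamma_\tau$.

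Finally I would divide the first displayed inequality by $s>0$, insert these two estimates, and let $s\downarrow0$; the quadratic remainder $s\int|x_1-x_3|^2\,\d\ggamma_\tau$ is finite (all marginals lie in $\cP_2(\H)$) and vanishes in the limit, yielding
\begin{equation*}
  \phi(\mu_\tau)-\phi(\nu)\le
  \frac1\tau\int\langle x_2-x_1,x_1-x_3\rangle\,\d\ggamma_\tau
  +W_2(\mu_\tau,\mu)\,W_2(\mu_\tau,\nu).
\end{equation*}
Rearranging and using $\langle x_1-x_2,x_1-x_3\rangle=-\langle x_2-x_1,x_1-x_3\rangle$ gives exactly \eqref{eq:7}. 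I expect the only delicate step to be the upper bound on $W_2^2(\mu,\mu_s)$ through the non-optimal coupling: this is what makes the estimate inherently one-sided (a $\limsup$ of difference quotients), and it works only because the zeroth-order term is exact, i.e.\ because the optimality of $\mmu_\tau$ is built into $\pi^{12}_\sharp\ggamma_\tau$. Everything else is elementary algebra together with the constant-speed property of the geodesic.
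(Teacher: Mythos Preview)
Your proposal is correct and follows essentially the same route as the paper: introduce the geodesic $\mu_s=(\pi^{1\to3}_s)_\sharp\ggamma_\tau$, combine the convexity inequality \eqref{eq:22} with \eqref{eq:6} applied to $\mu_s$, bound $W_2^2(\mu,\mu_s)$ via the competitor $(\pi^2,\pi^{1\to3}_s)_\sharp\ggamma_\tau$ (using that $\pi^{12}_\sharp\ggamma_\tau=\mmu_\tau$ is optimal so the zeroth-order term is exact), divide by $s$ and let $s\downarrow0$. Note that the hypothesis ``$\pi^{13}_\sharp\ggamma_\tau\in\Gamma_o(\mu_\tau,\mu)$'' in the statement is a typo for $\Gamma_o(\mu_\tau,\nu)$; you have correctly read it this way, and indeed the paper's own proof uses precisely $s^{-1}W_2(\mu_s,\mu_\tau)=W_2(\nu,\mu_\tau)$, which requires optimality between $\mu_\tau$ and $\nu$.
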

\begin{proof}
  Let $\ggamma_\tau\in \Gamma(\mmu_\tau,\nu)$
  as in the statement of the Lemma 
  and let $\nu_s:=(\pi^{1\to3}_s)_\sharp \ggamma_\tau$.
  Since $\phi$ 
  satisfies the convexity inequality \eqref{eq:22} along $(\nu_s)_{s\in [0,1]}$ 
  we have
  \begin{equation}
    \label{eq:24}
    \phi(\nu)-\phi(\mu_\tau)
    \ge
      \frac 1s\Big(\phi(\nu_s)-\phi(\mu_\tau)\Big).
    \end{equation}
    On the other hand, \eqref{eq:6}
    and the fact that $s^{-1}W_2(\nu_s,\mu_\tau)=W_2(\nu,\mu_\tau)$ yield
    \begin{equation}
      \label{eq:25}
    \frac 1s\Big(\phi(\nu_s)-\phi(\mu_\tau)\Big)
    \ge
    \frac 1{2\tau s}\Big(W_2^2(\mu_\tau,\mu)-
    \frac 1{2\tau}W_2^2(\mu,\nu_s)\Big)
      -W_2(\mu_\tau,\mu)W_2(\mu_\tau,\nu)
  \end{equation}
  Since $\pi^{12}_\sharp\ggamma_\tau$ is an optimal coupling between
  $\mu_\tau$ and $\mu$ and
  $(\pi^{1\to3}_s)_\sharp\ggamma_\tau=\nu_s$ we have
  \begin{displaymath}
    W_2^2(\mu_\tau,\mu)=
    \int
    |x_1-x_2|^2\,\d\ggamma_\tau,\quad
    W_2^2(\mu,\nu_s)\le \int|x_2-(1-s)x_1-sx_3|^2\,\d\ggamma_\tau
  \end{displaymath}
  so that \eqref{eq:25} yields
  \begin{equation}\label{eq:26}
     \frac 1s\Big(\phi(\nu_s)-\phi(\mu_\tau)\Big)\ge
    \frac 1{2\tau s}\int
    \Big(|x_1-x_2|^2-|x_2-(1-s)x_1-sx_3|^2\Big)\,\d\ggamma_\tau
    -W_2(\mu_\tau,\mu)W_2(\mu_\tau,\nu)
  \end{equation}
  Passing to the limit as $s\downarrow0$ in \eqref{eq:26}
  and recalling \eqref{eq:10} we eventually get \eqref{eq:7}.
\end{proof}
\begin{proof}[Proof of Theorem \ref{thm:main-wlsc}]
  It is not restrictive to assume that $\phi$ is proper
  and, possibly extracting a subsequence, that
  the limit $L:=\lim_{n\to\infty}\phi(\mu_n)$ exists and it is finite,
  where $\mu_n$ is a sequence as in \eqref{eq:8}.
  We set
  $S:=\sup_nW_2(\mu_n,\mu)$, which is finite since $(\mu_n)$ is
  bounded in $\cP_2(\H)$.
  
  For every $\tau>0$ let $\mu_\tau$
  be as in \eqref{eq:6} and \eqref{eq:6bis} of Theorem 
  \ref{thm:MS} and let $\ggamma_{\tau,n}\in \Gamma(\mmu_\tau,\mu_n)$
  as in the previous Lemma \ref{le:step1}.
  \eqref{eq:7} yields
    \begin{equation}
    \label{eq:7bis}
    \phi(\mu_n)\ge \phi(\mu_\tau)+
    \frac 1{\tau }\int
    \big\langle x_1-x_2,x_1-x_3\big\rangle \,\d\ggamma_{\tau,n}
    -W_2(\mu_\tau,\mu)\Big(W_2(\mu_\tau,\mu) +S\Big).
  \end{equation}
  Setting $\sfZ:=(\H^2)\times \H$, we can apply Proposition
  \ref{prop:finalmente}(b) to the sequence
  $(\ggamma_{\tau,n})_n$ obtaining a subsequence
  (still denoted by $\ggamma_{\tau,n}$) converging to
  a limit $\ggamma_\tau\in \Gamma(\mmu_\tau,\mu)$
  in $\wcP 22\sfZ$.
  Since 
  \begin{equation}
    \label{eq:9}
    \text{the map $A:\sfZ\to \R$ defined by  }
    A(x_1,x_2,x_3):=\big\langle x_1-x_2,x_1-x_3\big\rangle
    \quad\text{belongs to }\testsw 22\sfZ,
  \end{equation}
  by the very definition of the topology of $\wcP22\sfZ$ we get
  \begin{equation}
    \label{eq:16}
    \lim_{n\to\infty}
    \int\big\langle x_1-x_2,x_1-x_3\big\rangle \,\d\ggamma_{\tau,n}=
    \int\big\langle x_1-x_2,x_1-x_3\big\rangle \,\d\ggamma_{\tau}.
  \end{equation}
  On the other hand,
  by Theorem \ref{thm:chefatica}, $\pi^{2,3}_\sharp \ggamma_\tau$ is
  optimal, thus belongs to $\Gamma_o(\mu,\mu)$:
  it follows that it is concentrated on the
  subspace $\{(x_2,x_3)\in \H^2:x_2=x_3\}$ so that 
  \begin{equation}
    \label{eq:15}
    \int\big\langle x_1-x_2,x_1-x_3\big\rangle \,\d\ggamma_{\tau}=
    \int\big\langle x_1-x_2,x_1-x_2\big\rangle \,\d\ggamma_{\tau}=
    W_2^2(\mu_\tau,\mu).
  \end{equation}
  Combining \eqref{eq:7bis} with \eqref{eq:16}
  and \eqref{eq:15} we eventually get
  \begin{equation}
    \label{eq:17}
    L=\liminf_{n\to\infty}\phi(\mu_n)
    \ge \phi(\mu_\tau)+\frac {1-\tau}\tau W_2^2(\mu,\mu_\tau)-
    SW_2(\mu_\tau,\mu).
  \end{equation}
  Passing to the limit as $\tau\downarrow0$ in \eqref{eq:17} and
  applying \eqref{eq:28} we obtain $L\ge \phi(\mu)$.  
\end{proof}
We make explicit two interesting consequences of the previous result.
\begin{corollary}
  \label{cor:PPA}
  Let $\phi: \cP_2(\H)\to
  \R\cup\{+\infty\}$ be a proper, lower semicontinuous, and 
  geodesically convex function.
  Then for every $\mu\in \cP_2(\H)$ and every $\tau>0$
  there exists a solution $\mu_\tau\in D(\phi)$ of
  the problem
  \begin{equation}
    \label{eq:59}
    \mu_\tau
    \quad\text{minimizes}\quad
    \nu\mapsto \frac 1{2\tau}
    W_2^2(\nu,\mu)+\phi(\nu)\quad \nu\in D(\phi).
  \end{equation}
  In particular, the proximal point algorithm
  \eqref{eq:55}
  has always a solution for every initial measure $\mu_0\in \cP_2(\H)$.
\end{corollary}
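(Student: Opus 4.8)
The plan is to apply the direct method to the functional
\[
  F(\nu):=\frac 1{2\tau}W_2^2(\nu,\mu)+\phi(\nu),\qquad \nu\in \cP_2(\H),
\]
proving that it is bounded below, coercive, and sequentially lower semicontinuous for the weak topology of $\cPtHw$, and then extracting a convergent minimizing sequence. Since $D(\phi)\neq\emptyset$ we have $\inf F<+\infty$. For coercivity I would first invoke the linear lower bound \eqref{eq:5} of Theorem \ref{thm:MS}, $\phi(\nu)\ge \phi_o-\ell_o W_2(\nu,\mu_o)$, together with $W_2(\nu,\mu_o)\le W_2(\nu,\mu)+W_2(\mu,\mu_o)$, to obtain $F(\nu)\ge \frac1{2\tau}W_2^2(\nu,\mu)-\ell_o W_2(\nu,\mu)+C$ with $C$ depending only on $\mu,\mu_o$. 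The right-hand side is bounded below and diverges to $+\infty$ as $W_2(\nu,\mu)\to\infty$, so $\inf F>-\infty$ and every minimizing sequence $(\nu_n)$ is bounded in $\cP_2(\H)$, i.e.~$\sup_n\int|x|^2\,\d\nu_n<\infty$. By Corollary \ref{cor:finalmente}(c) it is then relatively sequentially compact in $\cPtHw$, and I extract a subsequence $\nu_{n_k}\to\nu_\infty$ there.

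It remains to pass to the liminf in $F$ along $\nu_{n_k}\to\nu_\infty$. For the $\phi$-term this is exactly Theorem \ref{thm:main-wlsc}, whose hypothesis \eqref{eq:8} is satisfied. The genuinely new point, and the main obstacle, is the lower semicontinuity of $\nu\mapsto W_2^2(\nu,\mu)$: because $W_2$ is built from the strong Hilbert norm while the convergence is only weak, the narrow lower semicontinuity of Proposition \ref{prop:7.1.3} (which refers to narrow convergence in $\cP(\H_s)$) does not apply directly.

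To handle this term I would choose optimal plans $\ggamma_k\in\Gamma_o(\nu_{n_k},\mu)$ and regard them in $\wcP 22{\H\times\H}$, placing the strong topology on the copy of $\H$ carrying the fixed marginal $\mu$ and the weak topology on the copy carrying $\nu_{n_k}$. Since $\{\mu\}$ is compact in $\cP_2(\H_s)$ and $\sup_k\int|x|^2\,\d\nu_{n_k}<\infty$, Proposition \ref{prop:finalmente}(b) provides a further subsequence with $\ggamma_k\to\ggamma_\infty$ in $\wcP 22{\H\times\H}$, where $\ggamma_\infty\in\Gamma(\nu_\infty,\mu)$ since the marginals pass to the limit by Corollary \ref{cor:finalmente}(d).

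Finally I would expand $|x-y|^2=|x|^2-2\langle x,y\rangle+|y|^2$ and pass to the liminf termwise: the square-norm term attached to $\mu$ is constant, the cross term converges because the scalar product belongs to $\testsw 22{\H\times\H}$ (as noted in the Remark following \eqref{eq:32}), and the square-norm term attached to $\nu_{n_k}$ is weakly lower semicontinuous. This yields
\[
  \liminf_k W_2^2(\nu_{n_k},\mu)=\liminf_k\int|x-y|^2\,\d\ggamma_k\ge\int|x-y|^2\,\d\ggamma_\infty\ge W_2^2(\nu_\infty,\mu),
\]
the last inequality using only that $\ggamma_\infty$ is a (possibly non-optimal) coupling of $\nu_\infty$ and $\mu$. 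Combining with the semicontinuity of $\phi$ gives $F(\nu_\infty)\le\liminf_k F(\nu_{n_k})=\inf F$, so $\nu_\infty\in D(\phi)$ attains the minimum in \eqref{eq:59}. Applying this existence result recursively with $\mu=\mu^{k-1}_\tau$ then produces the entire proximal sequence, which is the final assertion.
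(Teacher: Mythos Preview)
Your proof is correct. The paper states this corollary without proof, presenting it as a direct consequence of Theorem~\ref{thm:main-wlsc}; the implicit argument is precisely the direct method you carry out. You have correctly identified that the only point not covered by Theorem~\ref{thm:main-wlsc} is the sequential lower semicontinuity of $\nu\mapsto W_2^2(\nu,\mu)$ in $\cPtHw$, since this map is \emph{not} geodesically convex in general, and your treatment of that term via the $\wcP 22{\H\times\H}$ compactness of Proposition~\ref{prop:finalmente}(b) together with the decomposition $|x-y|^2=|x|^2-2\langle x,y\rangle+|y|^2$ is sound. (A small bookkeeping point: since you write $\ggamma_k\in\Gamma_o(\nu_{n_k},\mu)$ but want the strong factor to carry $\mu$, you are implicitly swapping coordinates; it may be cleaner to take $\ggamma_k\in\Gamma_o(\mu,\nu_{n_k})$ from the start.)

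It is worth noting that the weak lower semicontinuity of $W_2^2(\cdot,\mu)$ is also an immediate consequence of the Opial inequality \eqref{eq:33} (just drop the middle term on the left-hand side), which the paper proves in Section~\ref{sec:Opial}. So one reading of the paper's organization is that the corollary tacitly anticipates Theorem~\ref{lem:opialHilbert}; your argument, by contrast, stays entirely within the tools of Sections~\ref{sec:sw}--\ref{sec:wlsc} and is in that sense more self-contained. In fact your computation for this term is essentially the same mechanism that drives the proof of Theorem~\ref{lem:opialHilbert} itself (pass to a limit coupling in $\wcP 22{\H\times\H}$, use that the scalar product lies in $\testsw 22{\H\times\H}$, and exploit weak lower semicontinuity of $|\cdot|^2$ on the weak factor).
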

\begin{corollary}
  Let $K$ be a geodesically convex set
  in $\cP_2(\H)$, i.e.~
  \begin{equation}
    \label{eq:57}
    \text{for every }\mu_0,\mu_1\in K\ \text{there exists }
    \mmu\in \Gamma_o(\mu_0,\mu_1):
    (\pi^{1\to2}_t)_\sharp\mmu\in K\quad
    \text{for every }t\in [0,1].
  \end{equation}
  If $K$ is closed in $\cP_2(\H)$ then it is also
  (weakly) sequentially closed in $\cP^w_2(H)$. In particular
  \begin{equation}
    \label{eq:58}
    \mu_n\in K,\ \sup_n\int|x|^2\,\d\mu_n<\infty,\
    \mu_n\to \mu\text{ in }\cP(\H_w)
    \text{ as }n\to\infty\quad
    \Longrightarrow\quad
    \mu\in K.
  \end{equation}
\end{corollary}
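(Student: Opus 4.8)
The plan is to deduce the statement from the weak sequential lower semicontinuity already proved in Theorem~\ref{thm:main-wlsc}, by encoding $K$ as the proper domain of its indicator functional. Assuming $K\neq\emptyset$ (the case $K=\emptyset$ makes \eqref{eq:58} vacuous, since no sequence $\mu_n\in K$ exists), I would introduce
\[
\phi(\mu):=
\begin{cases}
0 & \text{if }\mu\in K,\\
+\infty & \text{otherwise,}
\end{cases}
\]
so that $D(\phi)=K$ and $\phi$ is proper.

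Next I would check that $\phi$ meets the two hypotheses of Theorem~\ref{thm:main-wlsc}. First, $\phi$ is lower semicontinuous on $(\cP_2(\H),W_2)$: its sublevel sets are exactly $\emptyset$ and $K$, both closed by assumption. Second, $\phi$ is geodesically convex: given $\mu_0,\mu_1\in D(\phi)=K$, the geodesic convexity \eqref{eq:57} of $K$ provides an optimal plan $\mmu\in\Gamma_o(\mu_0,\mu_1)$ with $(\pi^{1\to2}_s)_\sharp\mmu\in K$ for every $s\in[0,1]$; along this plan $\phi\big((\pi^{1\to2}_s)_\sharp\mmu\big)=0=(1-s)\phi(\mu_0)+s\phi(\mu_1)$, which is precisely the convexity inequality \eqref{eq:23}.

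With these two verifications in place, Theorem~\ref{thm:main-wlsc} applies to $\phi$. Now take a sequence $\mu_n\in K$ with $\sup_n\int|x|^2\,\d\mu_n<\infty$ and $\mu_n\to\mu$ narrowly in $\cP(\H_w)$. Since $x\mapsto|x|^2$ is weakly lower semicontinuous and nonnegative, narrow convergence in $\cP(\H_w)$ gives $\int|x|^2\,\d\mu\le\liminf_n\int|x|^2\,\d\mu_n<\infty$, so $\mu\in\cP_2(\H)$ and the pair $\big((\mu_n),\mu\big)$ satisfies exactly the hypotheses \eqref{eq:8}. As $\phi(\mu_n)=0$ for every $n$, the conclusion of Theorem~\ref{thm:main-wlsc} yields
\[
\phi(\mu)\le\liminf_{n\to\infty}\phi(\mu_n)=0,
\]
and since $\phi$ takes only the values $0$ and $+\infty$ this forces $\phi(\mu)=0$, i.e.~$\mu\in K$, which is \eqref{eq:58}.

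Since the argument is a soft corollary of the main theorem, there is no substantial analytic obstacle; the only points requiring care are the routine verification that the indicator functional inherits geodesic convexity from $K$ in the precise sense of \eqref{eq:23} (the \emph{same} optimal plan witnessing \eqref{eq:57} being the one entering the convexity inequality for $\phi$), and the preliminary remark that the weak limit $\mu$ has finite quadratic moment, so that it lies in $\cP_2(\H)$ and Theorem~\ref{thm:main-wlsc} is indeed applicable.
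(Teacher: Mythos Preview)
Your proposal is correct and is exactly the approach the paper intends: the corollary is stated without proof precisely because it follows by applying Theorem~\ref{thm:main-wlsc} to the indicator functional of $K$. Your additional care in handling the trivial case $K=\emptyset$ and in checking that the weak limit $\mu$ has finite quadratic moment (so that it lies in $\cP_2(\H)$ and the hypotheses~\eqref{eq:8} are met) is appropriate and complete.
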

\section{Opial property}
\label{sec:Opial}
Having introduced a notion of weak convergence in $\cPtHw$ (see
Definition
\ref{def:weak}) 
which shares many properties of the weak topology in $\H$,
it is natural to investigate if the Opial property holds in 
$\cPtHw$.
This turns out to be true, as stated by the following result.

\begin{thm}[Opial property in $\cP_2(\H)$]\label{lem:opialHilbert}
  Let $(\mu_n)_{n\in\mathbb{N}}$ be a sequence
  weakly converging to $\mu$ in $\cPtHw$ according to
  Definition \ref{def:weak}. Then
  \begin{equation}
    \label{eq:33}
  W_2^2(\nu,\mu)+\liminf_{n\to\infty}W_2^2(\mu_n,\mu)\leq\liminf_{n\to\infty}W_2^2(\mu_n,\nu)\qquad\text{for
    every }\nu\in\mathcal{P}_2(\H).
\end{equation}
In particular,
\begin{displaymath}
  \liminf_{n\to\infty}W_2(\mu_n,\mu)<\liminf_{n\to\infty}
  W_2(\mu_n,\nu)\qquad\text{for every }\nu\in\mathcal{P}_2(\H)\text{ with } \nu\neq\mu.
\end{displaymath}
\end{thm}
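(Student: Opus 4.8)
The plan is to mirror Opial's original argument, where the single vector identity \eqref{eq:30} is replaced by a three-marginal coupling together with an associated cosine-type identity, whose cross term must be shown to vanish thanks to the weak convergence $\mu_n\to\mu$.

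First I would fix $\nu\in\cP_2(\H)$ and reduce to a single subsequence: after extraction I may assume that $W_2^2(\mu_n,\nu)$ converges to $\liminf_n W_2^2(\mu_n,\nu)$ and, by a further extraction, that $W_2^2(\mu_n,\mu)$ converges as well. For each $n$ I pick optimal plans $\alpha_n\in\Gamma_o(\mu_n,\mu)$ and $\gamma_n\in\Gamma_o(\mu_n,\nu)$ and glue them along their common first marginal $\mu_n$ (Lemma \ref{lem:givenmarginal}), obtaining $\sigma_n\in\cP(\H\times\H\times\H)$ with marginals $\mu_n,\mu,\nu$ on the coordinates $x_1,x_2,x_3$ and with $\pi^{12}_\sharp\sigma_n=\alpha_n$, $\pi^{13}_\sharp\sigma_n=\gamma_n$. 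Expanding $|x_1-x_3|^2=|x_1-x_2|^2+|x_2-x_3|^2+2\langle x_1-x_2,x_2-x_3\rangle$ and integrating, while using optimality of $\alpha_n$, gives
\[
W_2^2(\mu_n,\nu)=W_2^2(\mu_n,\mu)+\int|x_2-x_3|^2\,\d\sigma_n+2\int\langle x_1-x_2,x_2-x_3\rangle\,\d\sigma_n .
\]
Since $\pi^{23}_\sharp\sigma_n\in\Gamma(\mu,\nu)$, the middle term is bounded below by $W_2^2(\mu,\nu)$, so the whole statement reduces to showing that the cross term $c_n:=\int\langle x_1-x_2,x_2-x_3\rangle\,\d\sigma_n$ tends to $0$.

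The crux is the vanishing of $c_n$. Here I regard $\H^3=\X_s\times\Y_w$ with $\X=\H\times\H$ (the strong coordinates $x_2,x_3$, whose marginals $\mu,\nu$ are fixed) and $\Y=\H$ (the weak coordinate $x_1$). The pair marginals $\pi^{23}_\sharp\sigma_n$ lie in the $W_2$-compact set $\Gamma(\mu,\nu)\subset\cP_2(\H^2)$, and $\int|x_1|^2\,\d\sigma_n\le c:=\sup_n\int|x|^2\,\d\mu_n<\infty$ (this also makes $(\mu_n)$ tight in $\cP(\H_w)$ via Proposition \ref{prop:1} with $\varphi=|\cdot|^2$, whose sublevels are weakly compact); hence every $\sigma_n$ belongs to a set of the form \eqref{eq:41}, which by Proposition \ref{prop:finalmente}(b) is compact and metrizable in $\wcP22{\H^2\times\H}$. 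I may thus extract $\sigma_n\to\sigma$ in $\wcP22{\H^2\times\H}$. The bilinear form $\langle x_1-x_2,x_2-x_3\rangle$ belongs to $\testsw22{\H^2\times\H}$: it is sequentially continuous on $\H_w\times\H_s\times\H_s$, and the only products involving the weak variable $x_1$ are controlled by $\eps|x_1|^2+C_\eps(|x_2|^2+|x_3|^2)$, so $c_n\to\int\langle x_1-x_2,x_2-x_3\rangle\,\d\sigma$ by the very definition of the topology of $\wcP22{\H^2\times\H}$. It remains to identify $\sigma$. The plan $\pi^{12}_\sharp\sigma$ is the narrow limit of the optimal plans $\alpha_n\in\Gamma_o(\mu_n,\mu)$; after exchanging the two coordinates so that the fixed marginal $\mu$ comes first and the weakly convergent, tight marginals $\mu_n$ come second, Theorem \ref{thm:chefatica} applies and yields $\pi^{12}_\sharp\sigma\in\Gamma_o(\mu,\mu)$. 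As $W_2(\mu,\mu)=0$, this plan is concentrated on the diagonal $\{x_1=x_2\}$, so the integrand vanishes $\sigma$-a.e.\ and $c_n\to0$.

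I expect this identification to be the main obstacle: it is precisely here that the \emph{weak} (rather than strong) convergence of $\mu_n$ is felt, since $W_2(\mu_n,\mu)$ need not tend to $0$ and hence $\int|x_1-x_2|^2\,\d\sigma$ need not vanish; the diagonal concentration cannot be read off from convergence of moments and genuinely requires the stability of cyclical monotonicity under strong-weak convergence encoded in Theorem \ref{thm:chefatica}, exactly as the classical fact $\langle x_n-x,x-y\rangle\to0$ rests on $x_n-x\weakto0$. Combining the displayed identity with $c_n\to0$ and passing to the $\liminf$ along the chosen subsequences gives \eqref{eq:33}. Finally, for $\nu\neq\mu$ one has $W_2^2(\nu,\mu)>0$ while $\liminf_n W_2^2(\mu_n,\mu)$ is finite (the sequence is bounded in $\cP_2(\H)$ by hypothesis), so \eqref{eq:33} forces $\liminf_n W_2^2(\mu_n,\mu)<\liminf_n W_2^2(\mu_n,\nu)$; since $t\mapsto t^2$ is increasing on $[0,\infty)$ we have $\liminf_n W_2^2(\mu_n,\mu)=(\liminf_n W_2(\mu_n,\mu))^2$ and likewise for $\nu$, and taking square roots yields the stated strict inequality.
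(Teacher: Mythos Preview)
Your argument is correct and follows essentially the same route as the paper's proof: both glue optimal plans $\Gamma_o(\mu_n,\mu)$ and $\Gamma_o(\mu_n,\nu)$ along the common marginal $\mu_n$, expand the squared distance into a cosine-type identity, pass to the limit in the cross term using the compactness of Proposition~\ref{prop:finalmente}(b) in $\wcP22{\H^2\times\H}$ (strong coordinates carrying the fixed marginals $\mu,\nu$, weak coordinate carrying $\mu_n$), and then kill the limiting cross term by invoking Theorem~\ref{thm:chefatica} to conclude that the limit of $\Gamma_o(\mu_n,\mu)$ is concentrated on the diagonal. The only differences are cosmetic (your $x_1$ is the paper's $x_3$), and your write-up is in fact a bit more explicit about why $\Gamma(\mu,\nu)$ is $W_2$-compact and why the final strict inequality for $W_2$ follows from the one for $W_2^2$.
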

\begin{remark}
  Notice that \eqref{eq:33} holds
  under the (seemingly) weaker assumption that
  $\mu_n\to\mu$ narrowly in $\cP(\H_w)$.
  In fact, \eqref{eq:33} trivially holds if
  $\liminf_{n\to\infty}W_2^2(\mu_n,\nu)=+\infty$.
  If the $\liminf$ is finite, then up to extracting a suitable
  subsequence
  we can always assume that $\mu_n$ is bounded in $\cP_2(\H)$
  so that narrow convergence in $\cP(\H_w)$ is equivalent to
  convergence in $\cPtHw$.
\end{remark}
\begin{proof}[Proof of Theorem \ref{lem:opialHilbert}]
  Let $\nu\in\Pcal_2(\H)$. Up to extracting a suitable
  subsequence it is not restrictive to assume that
  $\liminf_{n\to\infty} W_2^2(\mu_n,\nu)=
  \lim_{n\to\infty} W_2^2(\mu_n,\nu)<+\infty$ %
  
  By Lemma \ref{lem:givenmarginal} for all $n\in\mathbb{N}$ we can find
  $\ggamma_n\in\Gamma(\mu,\nu,\mu_n)$
  such that $\pi^{13}_\sharp \ggamma_n\in \Gamma_o(\mu,\mu_n)$
  and $\pi^{23}_\sharp \ggamma_n\in \Gamma_o(\nu,\mu_n)$.
  We have
\[
\begin{split}
  W_2^2(\mu_{n},\nu) & 
=\int_{X^3}|x_3-x_2|^2\,\d\ggamma_n(x_1,x_2,x_3)= \\
& = \int_{X^3}|x_3-x_1|^2\,\d\ggamma_n+\int_{X^3}|x_1-x_2|^2\,\d\ggamma_n+2\int_{X^3}\langle x_3-x_1,x_1-x_2\rangle\,\d\ggamma_n
\end{split}
\]
and therefore
\begin{equation}\label{eq:opialdimHilbert}
  W_2^2(\mu_{n},\nu)\geq W_2^2(\mu_{n},\mu)+W_2^2(\nu,\mu)+2\int_{X^3}\langle x_3-x_1,x_1-x_2\rangle\,\d\ggamma_n.
  \end{equation}
    Setting $\PXname:=(\H^2)_s\times \H_w$,
    we can apply Proposition \ref{prop:finalmente}(b) with $p=q=2$
    to the sequence $(\ggamma_n)_{n\in \N}$
    and find 
    a subsequence $(n_k)_{k\in \N}$ and
    $\ggamma\in\mathcal{P}_{2}(\H^3)$
    such that $\ggamma_{n_k}\to\ggamma$
    in $\Pcal_{22}^{sw}(\H^2 \times \H)$.
    By \eqref{eq:9} and the very definition of the  topology of
        $\cP_{22}^{sw}(\H^2\times \H)$ we can pass to the limit in
    \eqref{eq:opialdimHilbert}
    along the subsequence $n_k$ obtaining
    \begin{equation}\label{eq:opialdimHilbert2}
      \liminf_{n\to\infty}W_2^2(\mu_{n},\nu)\geq
      \liminf_{n\to\infty}W_2^2(\mu_{n},\mu)+W_2^2(\nu,\mu)+
      2\int_{X^3}\langle x_3-x_1,x_1-x_2\rangle\,\d\ggamma.
  \end{equation}
  On the other hand 
  $\pi^{13}_\sharp\ggamma_{n_k}\to \pi^{13}_\sharp\ggamma$ in
  $\cP_{22}^{sw}(\H\times\H)$;
  since
  $\pi^{13}_\sharp\ggamma_{n_k} \in \Gamma_o(\mu,\mu_{n_k})$,
  by Theorem \ref{thm:chefatica},
  $\pi^{13}_\sharp\ggamma\in \Gamma_o(\mu,\mu)$
  so that
\begin{equation}\label{eq:opialdim2Hilbert}
  \pi_{\sharp }^{1,3}\ggamma=(\mathrm{id}_\H\times
  \mathrm{id}_\H)_{\sharp }\mu,
\end{equation}
thus
  $\ggamma$ is concentrated on the subset
  $\{(x_1,x_2,x_3)\in \H^3:x_1=x_3\}$
  and therefore
  \begin{displaymath}
    \int_{X^3}\langle x_3-x_1,x_1-x_2\rangle\,\d\ggamma=0.
  \end{displaymath}
  Inserting this identity in \eqref{eq:opialdimHilbert2}
  we eventually get
  \eqref{eq:33}.
\end{proof}
\noindent In the simple finite dimensional case of
$\H=\R^d$ we obtain the following result.
\begin{corollary}[Opial property in $\Pcal_2(\mathbb{R}^d)$]
  \label{cor:opial}
  Let $(\mu_n)_{n\in\mathbb{N}}$ be a sequence in $\mathcal{P}_2(\mathbb{R}^d)$ and $\mu\in\mathcal{P}_2(\mathbb{R}^d)$. If $\mu_n \to \mu$ narrowly in $\Pcal(\R^d)$, then
\begin{equation}
     W_2^2(\nu,\mu)+\liminf_{n\to\infty}W_2^2(\mu_n,\mu)\leq\liminf_{n\to\infty}W_2^2(\mu_n,\nu)\qquad\text{for
       every }\nu\in\mathcal{P}_2(\mathbb{R}^d).
   \end{equation}
\end{corollary}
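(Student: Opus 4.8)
The plan is to deduce this finite-dimensional statement directly from the Hilbert-space version, Theorem \ref{lem:opialHilbert}, exploiting the fact that when $\H=\R^d$ the weak topology coincides with the strong one. In particular $\R^d_w=\R^d$ as topological spaces, so narrow convergence in $\cP(\R^d_w)$ is precisely narrow convergence in $\cP(\R^d)$, and the only gap to fill is the uniform second-moment bound required to pass from narrow convergence to convergence in $\cP_2^w$ (see Corollary \ref{cor:finalmente}). This is exactly the situation addressed by the Remark following Theorem \ref{lem:opialHilbert}.

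First I would fix $\nu\in\cP_2(\R^d)$ and dispose of the trivial case: if $\liminf_{n\to\infty}W_2^2(\mu_n,\nu)=+\infty$ the asserted inequality holds automatically. Otherwise I would extract a subsequence $(\mu_{n_k})_k$ along which $W_2^2(\mu_{n_k},\nu)$ converges to the finite value $\liminf_{n}W_2^2(\mu_n,\nu)$. Along this subsequence $\sup_k W_2(\mu_{n_k},\nu)<\infty$, and since $\nu\in\cP_2(\R^d)$ the estimate $\int|x|^2\,\d\mu_{n_k}=W_2^2(\mu_{n_k},\delta_0)\le\big(W_2(\mu_{n_k},\nu)+W_2(\nu,\delta_0)\big)^2$ yields $\sup_k\int|x|^2\,\d\mu_{n_k}<\infty$. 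Hence the subsequence is bounded in $\cP_2(\R^d)$, and by Corollary \ref{cor:finalmente}(b) the narrow convergence $\mu_{n_k}\to\mu$ upgrades to convergence in $\cP_2^w(\R^d)$ in the sense of Definition \ref{def:weak}.

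I would then apply Theorem \ref{lem:opialHilbert} (with $\H=\R^d$) to the subsequence $(\mu_{n_k})_k$, obtaining
\begin{equation*}
  W_2^2(\nu,\mu)+\liminf_{k\to\infty}W_2^2(\mu_{n_k},\mu)
  \le\liminf_{k\to\infty}W_2^2(\mu_{n_k},\nu)
  =\liminf_{n\to\infty}W_2^2(\mu_n,\nu),
\end{equation*}
where the last equality is the defining property of the chosen subsequence. Since passing to a subsequence can only increase a $\liminf$, we have $\liminf_{k\to\infty}W_2^2(\mu_{n_k},\mu)\ge\liminf_{n\to\infty}W_2^2(\mu_n,\mu)$, and combining these two facts gives exactly the claimed inequality for the full sequence.

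There is no serious obstacle here; the result is essentially a specialization in which the weak topology of $\cPtHw$ collapses onto the usual narrow topology. The only point requiring a little care is the logical structure of the subsequence argument: the uniform moment bound must be \emph{harvested} from the finiteness of $\liminf_{n}W_2^2(\mu_n,\nu)$ rather than assumed as a hypothesis, and one must choose the subsequence so as to realize the $\liminf$ on the right-hand side (so that the inequality is not weakened there), while the $\liminf$ on the left-hand side can only improve under passage to a subsequence.
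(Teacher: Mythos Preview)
Your proof is correct and follows essentially the same route as the paper: the corollary is stated without a separate proof there, since it follows immediately from Theorem~\ref{lem:opialHilbert} via the Remark after it (finite-dimensional $\H$ makes narrow convergence in $\cP(\R^d)$ coincide with narrow convergence in $\cP(\H_w)$, and the finiteness of the right-hand $\liminf$ supplies the moment bound). Your subsequence argument is exactly the content of that Remark, spelled out carefully.
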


\section{Applications}
\label{sec:applications}
Let us first enucleate the technical core of many applications of
Opial Lemma. We state it in $\cP_2(\H)$, where $\H$ is a separable
Hilbert space as in the previous section.

\begin{lemma}
  \label{le:core}
  Let $\mathfrak T\subset (0,+\infty)$ be an unbounded set,
  let $\mu:\mathfrak T\to \cP_2(\H)$
  be a \emph{bounded} map
  and let $M$ be the set of limit points of $\mu$ in $\cPtHw$
  along diverging sequences:
  \begin{equation}
    \label{eq:27}
    M:=\{\nu\in \cP_2(H_w):\text{there exists an increasing
      sequence $(t_n)_{n\in \N}\subset \mathfrak T$}:
      \mu(t_n)\to\nu\text{ in }\cPtHw\}.
  \end{equation}
  If 
  \begin{equation}
    \label{eq:40}
    \text{for every $\nu\in M$
    the function $t\mapsto W_2(\mu(t),\nu)$ is decreasing in
    $\mathfrak T$}
  \end{equation}
  then there exists the limit
  $\displaystyle\lim_{t\to\infty\atop t\in \mathfrak T}\mu(t)$ in $\cPtHw$.
\end{lemma}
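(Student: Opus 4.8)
The plan is to prove that the set $M$ of weak limit points of $\mu$ at infinity is a nonempty singleton, and then to upgrade this into genuine convergence of $\mu(t)$ as $t\to\infty$. First I would check that $M\neq\emptyset$: since $\mu$ is bounded in $\cP_2(\H)$, for any increasing sequence $t_n\to\infty$ in $\mathfrak T$ the measures $(\mu(t_n))_n$ form a bounded family, which is relatively sequentially compact in $\cPtHw$ by Corollary \ref{cor:finalmente}(3); any subsequential limit belongs to $M$. Next, for \emph{every} fixed $\nu\in M$, hypothesis \eqref{eq:40} guarantees that $t\mapsto W_2(\mu(t),\nu)$ is decreasing on $\mathfrak T$ and bounded below by $0$, so the limit
\[
  \ell(\nu):=\lim_{t\to\infty,\,t\in\mathfrak T} W_2(\mu(t),\nu)
\]
exists and is finite. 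In particular, along \emph{any} diverging sequence $(t_n)$ one has $\lim_n W_2^2(\mu(t_n),\nu)=\ell(\nu)^2$, so the $\liminf$'s that will appear below can be replaced by these monotone limits.

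The core of the argument is to show that $M$ reduces to a single point, and this is exactly where the Opial property enters. Let $\nu_1,\nu_2\in M$ and choose diverging sequences $(s_n)$, $(t_n)$ in $\mathfrak T$ with $\mu(s_n)\to\nu_1$ and $\mu(t_n)\to\nu_2$ in $\cPtHw$. Applying the Opial inequality \eqref{eq:33} of Theorem \ref{lem:opialHilbert} to $\mu(s_n)\to\nu_1$ with the test measure $\nu_2$, and using the previous paragraph to evaluate the two $\liminf$'s as $\ell(\nu_1)^2$ and $\ell(\nu_2)^2$, I get
\[
  W_2^2(\nu_2,\nu_1)+\ell(\nu_1)^2\le \ell(\nu_2)^2.
\]
Exchanging the roles of $\nu_1$ and $\nu_2$ (applying \eqref{eq:33} to $\mu(t_n)\to\nu_2$ with test measure $\nu_1$) yields the symmetric bound $W_2^2(\nu_1,\nu_2)+\ell(\nu_2)^2\le\ell(\nu_1)^2$. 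Summing the two inequalities cancels the $\ell$-terms and forces $2W_2^2(\nu_1,\nu_2)\le 0$; hence $\nu_1=\nu_2$ and $M=\{\mu_\infty\}$ for a single measure $\mu_\infty$.

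It then remains to deduce that $\mu(t)\to\mu_\infty$ in $\cPtHw$. Since the image of $\mu$ is bounded, it lies in a set on which the topology of $\cPtHw$ is metrizable and coincides with the narrow topology of $\cP(\H_w)$ (Proposition \ref{prop:finalmente}(b) and Corollary \ref{cor:finalmente}(1)). If convergence failed, there would be a neighborhood $U$ of $\mu_\infty$ and a diverging sequence $(t_n)\subset\mathfrak T$ with $\mu(t_n)\notin U$ for all $n$; passing to an increasing subsequence and using relative sequential compactness again, I would extract a limit lying in $M=\{\mu_\infty\}$, contradicting $\mu(t_n)\notin U$. Therefore the limit $\lim_{t\to\infty,\,t\in\mathfrak T}\mu(t)$ exists in $\cPtHw$ and equals $\mu_\infty$.

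The main obstacle is precisely the singleton step: in a general metric space a bounded curve whose distance to each of its limit points is nonincreasing may still oscillate between several limit points, and it is the Opial inequality that rules this out. The only subtlety to handle with care is the interplay between the $\liminf$'s in \eqref{eq:33} and the monotone limits $\ell(\nu_i)$, which is legitimate exactly because \eqref{eq:40} is assumed for \emph{every} element of $M$ rather than merely along a single diverging sequence.
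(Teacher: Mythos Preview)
Your proposal is correct and follows essentially the same approach as the paper's proof: both exploit boundedness to get sequential compactness in $\cPtHw$, define the monotone limit $\ell(\nu)$ (the paper uses $L(\nu)=\ell(\nu)^2$), apply the Opial inequality twice with the roles of the two limit points swapped, and sum to force $W_2(\nu_1,\nu_2)=0$. Your final ``upgrade'' paragraph makes explicit the metrizability/subsequence argument that the paper leaves implicit in the phrase ``it is therefore sufficient to show\ldots''.
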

\begin{proof}
  Since the set $\mathcal{K}:=\{\mu(t): t\in \mathfrak T\}$ is bounded in
    $\cP_2(\H)$, it is contained in a compact and
  metrizable subset of $\cPtHw$.
  In particular
  $M$ is not empty and
  every $(\mu_{t_n})_{n\in\N}$
  corresponding to a diverging sequence $t_n\up\infty$, $t_n\in
  \mathfrak T$,
  has a convergence subsequence in $\cPtHw$.
  For every $\nu\in M$ we set
  \begin{equation}
    \label{eq:43}
    L(\nu):=\inf_{t\in \mathfrak T} W_2^2(\mu(t),\nu)=
    \lim_{t\to\infty\atop t\in \mathfrak T}
    W_2^2(\mu(t),\nu).
  \end{equation}
  In order to prove the existence of the limit
  it is therefore sufficient to show that 
  if $s_n,t_n\uparrow +\infty$ as $n\to\infty$
  are diverging sequences in $\mathfrak T$ such that
  the corresponding sequences $(\mu(s_n))_{n\in \N}$ and
  $(\mu(t_n))_{n\in \N}$ respectively converge to 
  $\nu'$ and $\nu''$ in $\cPtHw$ then 
  $\nu'=\nu''$.

  Since $\nu',\nu''\in M$,
  \eqref{eq:40} yields
  \begin{align*}
    L(\nu')&=      
        \liminf_{n\to\infty}W_2^2(\mu(t_n),\nu')=
        \liminf_{n\to\infty}W_2^2(\mu(s_n),\nu'),
    \\
    L(\nu'')&=
         \liminf_{n\to\infty}W_2^2(\mu(s_n),\nu'')
         =       \liminf_{n\to\infty}W_2^2(\mu(t_n),\nu'').
  \end{align*}
  Applying \eqref{eq:33} of Theorem \ref{lem:opialHilbert}
  first to the sequence $(\mu(s_n))_{n\in \N}$
  and then to the sequence $(\mu(t_n))_{n\in \N}$ we eventually get
\begin{align*}
  W_2^2(\nu',\nu'')+L(\nu')&\le L(\nu'')\\
  W_2^2(\nu'',\nu')+L(\nu'')&\le L(\nu')
\end{align*}
which imply $W_2(\nu',\nu'')=0$.
\end{proof}

\subsection{Convergence of Gradient Flows}
\label{section:3.1}

Let $\H$ be a separable Hilbert space
and let $\phi:\cP_2(\H)\to (-\infty,+\infty]$ be a proper, lower
semicontinuous
and geodesically convex functional such that
$\argmin\phi$ is not empty.

We want to study the asymptotic behaviour of
the gradient flows of $\phi$.

\begin{definition}
  A locally Lipschitz curve $\mu:(0,\infty)\to\cP_2(\H)$
  is a gradient flow of $\phi$ in the {\rm EVI} sense if
  it satisfies
  \begin{equation}
    \label{eq:34}
    \frac{1}{2}\frac{\d}{\d
      t}W_2^2(\mu_t,\sigma)\leq\phi(\sigma)-\phi(\mu_t)
    \quad\text{$\mathscr L^1$-a.e.~in $(0,\infty)$,\quad
      for every }\sigma\in D(\phi).
    \tag{EVI}
  \end{equation}
\end{definition}

\begin{thm}\label{thm:centrale2} Let $\phi:\mathcal{P}_2(\H)\to
  (-\infty,+\infty]$
  be a proper, l.s.c.~and geodesically convex functional and
  let $\mu:(0,+\infty)\to\mathcal{P}_2(\H)$ be a Gradient Flow in the
  {\rm EVI} sense. 
  Then $\argmin \phi\neq\emptyset$
  if and only if the curve $(\mu_t)_{t\ge 1}$ is bounded in
  $\cP_2(H)$;
  in this case 
  there exists $\mu\in \argmin\phi$ such that
  $\mu_t\to \mu$ in $\cPtHw$ as $t\to+\infty$.
\end{thm}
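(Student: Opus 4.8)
The plan is to transpose Bruck's argument \cite{Bruck75} to the EVI/metric setting, with the Opial property (Theorem~\ref{lem:opialHilbert}) entering through its abstract packaging in Lemma~\ref{le:core}. The implication \emph{$\argmin\phi\neq\emptyset\Rightarrow$ boundedness} is the easy direction: if $\sigma\in\argmin\phi$ then $\sigma\in D(\phi)$ and $\phi(\sigma)\le\phi(\mu_t)$, so \eqref{eq:34} gives $\frac{\d}{\d t}W_2^2(\mu_t,\sigma)\le 0$ for $\mathscr L^1$-a.e.\ $t$; since $t\mapsto W_2^2(\mu_t,\sigma)$ is locally Lipschitz (the curve being locally Lipschitz and the distance $1$-Lipschitz in each entry), it is non-increasing, whence $W_2(\mu_t,\sigma)\le W_2(\mu_1,\sigma)$ for $t\ge 1$ and $(\mu_t)_{t\ge1}$ is bounded. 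The substance of the statement is the converse together with the convergence, which I would split into three steps.

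\emph{Energy decay.} Integrating \eqref{eq:34} over $[s,t]$ against a fixed $\sigma\in D(\phi)$ yields
\[
  \int_s^t\phi(\mu_r)\,\d r\le (t-s)\,\phi(\sigma)+\tfrac12 W_2^2(\mu_s,\sigma),
\]
so that, dividing by $t-s$ and letting $t\to\infty$, the Ces\`aro averages of $\phi(\mu_r)$ are asymptotically dominated by $\phi(\sigma)$ for every $\sigma\in D(\phi)$. Recalling the standard fact from the EVI theory \cite{AGS08} that $t\mapsto\phi(\mu_t)$ is non-increasing, the averages share the same limit as $\phi(\mu_t)$, and therefore $\lim_{t\to\infty}\phi(\mu_t)=\inf_{\cP_2(\H)}\phi$; the boundedness of the curve together with the linear lower bound \eqref{eq:5} keeps $\phi(\mu_t)$ finite, so this infimum is finite.

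\emph{Limit points are minimizers, and conclusion.} Since $(\mu_t)_{t\ge1}$ is bounded, it is relatively compact in $\cPtHw$, so the set $M$ of \eqref{eq:27} is non-empty. For $\nu\in M$, say $\mu_{t_n}\to\nu$ in $\cPtHw$ with $t_n\to\infty$, the weak lower semicontinuity of $\phi$ (Theorem~\ref{thm:main-wlsc}, applicable because the second moments are uniformly bounded) combines with the energy decay to give $\phi(\nu)\le\liminf_n\phi(\mu_{t_n})=\inf\phi$, hence $\nu\in\argmin\phi$. This already proves $\argmin\phi\neq\emptyset$ and, crucially, the inclusion $M\subseteq\argmin\phi$. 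Consequently every $\nu\in M$ satisfies $\phi(\nu)\le\phi(\mu_t)$ for all $t$, so \eqref{eq:34} gives $\frac{\d}{\d t}W_2^2(\mu_t,\nu)\le0$ and $t\mapsto W_2(\mu_t,\nu)$ is non-increasing: this is precisely hypothesis \eqref{eq:40} of Lemma~\ref{le:core} with $\mathfrak T=[1,\infty)$. That lemma then furnishes the limit $\mu_\infty:=\lim_{t\to\infty}\mu_t$ in $\cPtHw$, and since $\mu_\infty\in M\subseteq\argmin\phi$ the proof is complete.

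The main obstacle is the interplay underlying the last step: Lemma~\ref{le:core} requires the distance $t\mapsto W_2(\mu_t,\nu)$ to be monotone for \emph{every} weak limit point $\nu$, whereas \eqref{eq:34} by itself only delivers monotonicity towards \emph{minimizers}. The gap is closed by the identification $M\subseteq\argmin\phi$, which rests on the two genuinely nontrivial inputs established earlier, namely the weak sequential lower semicontinuity of geodesically convex functionals (Theorem~\ref{thm:main-wlsc}) and the energy decay $\phi(\mu_t)\to\inf\phi$. A minor but necessary point throughout is that each test measure $\nu$ used in \eqref{eq:34} must belong to $D(\phi)$, which is automatic once $\nu$ is known to be a minimizer.
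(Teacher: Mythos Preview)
Your argument is correct and follows essentially the same route as the paper: integrate \eqref{eq:34}, use the monotonicity of $t\mapsto\phi(\mu_t)$ to pass from the time-average to the pointwise value, invoke Theorem~\ref{thm:main-wlsc} to conclude $M\subset\argmin\phi$, and then feed this into Lemma~\ref{le:core}. The only cosmetic difference is that you isolate the intermediate claim $\lim_{t\to\infty}\phi(\mu_t)=\inf\phi$, whereas the paper goes directly from $\limsup_{t\to\infty}\phi(\mu_t)\le\phi(\sigma)$ to the minimality of each weak limit point; the paper also cites \cite{Muratori-Savare20} rather than \cite{AGS08} for the monotonicity of $\phi(\mu_t)$.
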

\begin{proof} Let
  us first remark that if $\nu$ is a minimizer of $\phi$ then
  \eqref{eq:34} yields
  \begin{equation}
    \label{eq:35}
    t\mapsto W_2(\mu_t,\nu)\quad\text{is decreasing in }(0,+\infty).
  \end{equation}
  In particular if $\argmin\phi\neq\emptyset$
  the set
  $\mathcal{K}:=\{\mu_t:t\ge 1\}$ is also bounded in
  $\cP_2(\H)$ 

  Let us now show that if $\mathcal K$ is bounded
  and $\mu$ is
    a limit point of $(\mu_t)_{t>0}$ along a
    diverging sequence
    $t_n\uparrow\infty$ then $\mu$ is a minimizer of $\phi$
    (this shows in particular that
    $\argmin \phi\neq\emptyset$).
  %
  %
    
  We integrate the \eqref{eq:34} equation form $1$ to $t>1$
  and we divide both sides by $t-1$, obtaining
  \[\frac{1}{2(t-1)}W_2^2(\mu_t,\sigma)+\frac{1}{(t-1)}\int_{1}^t\phi(\mu_r)
    \,\d r\leq\frac{1}{2(t-1)}W_2^2(\mu_1,\sigma)+\phi(\sigma).\]
  Since $t\mapsto\phi(\mu_t)$ is not increasing (see \cite[Theorem 3.3]{Muratori-Savare20})
  we have
  \[\phi(\mu_t)+\frac{1}{2(t-1)}W_2^2(\mu_t,\sigma)\leq\frac{1}{2(t-1)}W_2^2(\mu_1,\sigma)+\phi(\sigma)\]
which yields
\[\limsup_{t\to\infty}\phi(\mu_t)\leq\phi(\sigma)\quad\text{for every
  }\sigma\in D(\phi)\]
since $W_2^2(\mu_t,\sigma)$ is bounded.
By the lower semicontinuity of $\phi$ in $\cPtHw$ we have
\[\phi(\mu)\leq\liminf_{k\to\infty}\phi(\mu_{t_k})
  \leq\limsup_{t\to\infty}\phi(\mu_t)\leq\phi(\sigma)
  \quad \text{for all }\sigma,\]
so that $\mu$ is a minimizer of $\phi$.

The previous argument shows that the set
$M$ defined as in \eqref{eq:27} (choosing
$\mathfrak T:=[1,\infty)$)
is contained in $\argmin\phi$, so that
it satisfies \eqref{eq:40} thanks to \eqref{eq:35}.
Applying Lemma \ref{le:core} we conclude that
the curve $\mu_t$ converges to a limit $\mu\in M$ as $t\to\infty$
in $\cPtHw$;
in particular, $\mu$ is a minimizer of $\phi$.
%
%
%
\end{proof}





\subsection{Weak convergence of
  the Proximal Point Algorithm}\label{section:3.2}
Under the same assumptions of the previous Section
\ref{section:3.1},
we want to study
the asymptotic properties of
the \emph{Proximal Point Algorithm}
\eqref{eq:51bis}.
First we define the (multivalued) operator
\begin{equation}\label{eq:PPAminimize}
  J_{\tau}(\mu)=
  \argmin_{\nu\in\Pcal_2(\H)}\big\{\Phi_{\tau}(\mu,\nu)\big\},
  \quad
  \Phi_\tau(\mu,\nu):=
  \phi(\nu)+\frac{1}{2\tau}W_2^2(\nu,\mu).
\end{equation}
Thanks to
Corollary \ref{cor:PPA},
for every choice of $\mu_0\in\Pdue$ and $\tau>0$,
the PPA algorithm generates a sequence of points $(\mu_\tau^{k})_{k\in\mathbb{N}}$ which solves
\begin{equation}\label{eq:PPAa}
    \begin{cases} \mu_\tau^0=\mu_0\\ \mu_\tau^{k+1}\in J_{\tau}(\mu_\tau^{k}) & k=1,2...
    \end{cases}
\end{equation}
%
As for the study of the convergence of the Minimizing Movement method
in \cite{AGS08}, the crucial property to
study the asymptotic behaviour of the PPA scheme relies on the notion
of convexity along generalized geodesics.

\begin{defn}[Convexity along generalized geodesics]
  \label{def:cgen}
$\phi:\Pcal_2(\H)\to(-\infty,+\infty]$ is called \emph{convex along
  generalized geodesics} if for every choice of $\nu,\mu_0,\mu_1$ in
$D(\phi)$ there exists a
plan $\ggamma\in\Gamma(\nu,\mu_0,\mu_1)$
with $\pi_{\sharp }^{1,2}\ggamma\in\Gamma_o(\nu,\mu_0)$,
$\pi_{\sharp }^{1,3}\ggamma\in\Gamma_o(\nu,\mu_1)$, such that
\[\phi(\mu_t^{2\to3})\leq (1-t)\phi(\mu_0)+t\phi(\mu_1)\qquad \forall t\in[0,1].\]
\end{defn}

\begin{remark}
  The curve
$\mu_t^{2\to3}$ defined by 
\[\mu_t^{2\to3}=(\pi_t^{2\to3})_{\sharp }\ggamma\qquad t\in[0,1]\]
where $\ggamma$ satisfies the conditions of Definition
\ref{def:cgen} is called a \emph{generalized geodesic}
connecting $\mu_0$ to $\mu_1$ with reference measure $\nu$.
If $\phi:\Pcal_2(\H)\to(-\infty,+\infty]$ is a functional which is convex along generalized geodesics, then for every choice of $\nu,\mu_0,\mu_1$ in $D(\phi)$
the map $t\mapsto \Phi_{\tau}(\nu,\mu_t^{2\to3})$ satisfies the inequality
\begin{equation}
  \Phi_{\tau}(\nu,\mu_t^{2\to3})\leq (1-t)\Phi_\tau(\nu,\mu_0)+
  t\Phi_\tau(\nu,\mu_1)-\frac{1}{2\tau}t(1-t)W_2^2(\mu_0,\mu_1).
\end{equation}
Convexity along generalized geodesics implies convexity along geodesics
(see \cite[Lemma 9.2.7]{AGS08} for a proof).
\end{remark}
\begin{thm}\label{thm:perPPA}
  Let us suppose that $\phi:\cP_2(\H)\to(-\infty,+\infty]$
  is proper, lower semicontinuous, and convex along generalized
  geodesics, $\mu_0\in \overline{D(\phi)}$, and 
  $\tau>0$.
\begin{enumerate}[(i)]
    \item The PPA algorithm \eqref{eq:PPAa} has a unique solution
      $(\mu^k_\tau)_{k\in \N}$.
    \item For each $\nu\in D(\phi)$ and $k\ge 1$ we have
      \begin{equation}
        \label{eq:perconvPPA}
      \frac{1}{2\tau}W_2^2(\mu^k_\tau,\nu)-
      \frac{1}{2\tau}W_2^2(\mu^{k-1}_\tau,\nu)\leq
      \phi(\nu)-\phi(\mu^k_\tau)-
      \frac{1}{2\tau}W_2^2(\mu_\tau^{k},\mu_\tau^{k-1}).
    \end{equation}
  \item
    In particular for every $k\ge1$ we have
    \begin{equation}
      \label{eq:60}
      \frac1{\tau}W_2^2(\mu^k_\tau,\mu^{k-1}_\tau)
      +\phi(\mu^k_\tau)\le
      \phi(\mu^{k-1}_\tau)
    \end{equation}
    and the sequence $k\mapsto \phi(\mu^k_\tau)$ is not increasing.
\end{enumerate}
\begin{proof}
See \cite[Theorem 4.1.3]{AGS08}.
\end{proof}
\end{thm}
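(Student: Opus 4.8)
The plan is to run the entire argument off a single engine: the $\tfrac1\tau$-strong convexity of the functional $\nu\mapsto \Phi_\tau(\mu^{k-1}_\tau,\nu)=\phi(\nu)+\frac1{2\tau}W_2^2(\nu,\mu^{k-1}_\tau)$ along the generalized geodesics based at the reference measure $\mu^{k-1}_\tau$, that is, the convexity inequality recorded in the Remark preceding the statement. Once this is granted, existence follows from Corollary \ref{cor:PPA} (which itself rests on the weak lower semicontinuity of Theorem \ref{thm:main-wlsc}), uniqueness is immediate, and both \eqref{eq:perconvPPA} and \eqref{eq:60} drop out of a one-parameter comparison followed by a limit $t\downarrow 0$.

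For (i) I would argue inductively. Corollary \ref{cor:PPA} is stated for an arbitrary base measure in $\cP_2(\H)$, so it applies to $\mu^0_\tau=\mu_0\in\overline{D(\phi)}$ and produces $\mu^1_\tau\in D(\phi)$; every subsequent base then lies in $D(\phi)$, so the recursion \eqref{eq:PPAa} is well posed and each set $J_\tau(\mu^{k-1}_\tau)$ is nonempty. For uniqueness, suppose $\nu_0,\nu_1$ both minimize $\Phi_\tau(\mu^{k-1}_\tau,\cdot)$ with common value $m$; connecting them by a generalized geodesic $(\mu^{2\to3}_t)_{t\in[0,1]}$ with reference $\mu^{k-1}_\tau$ (Definition \ref{def:cgen}) and inserting it into the convexity inequality, the right-hand side equals $m-\frac1{2\tau}t(1-t)W_2^2(\nu_0,\nu_1)$, while the left-hand side is $\ge m$ by minimality, forcing $W_2^2(\nu_0,\nu_1)=0$.

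For (ii), fix $\nu\in D(\phi)$ and apply the same convexity inequality to the generalized geodesic with base $\mu^{k-1}_\tau$ joining $\mu^k_\tau$ (at $t=0$) to $\nu$ (at $t=1$). Minimality of $\mu^k_\tau$ bounds the left-hand side below by $\Phi_\tau(\mu^{k-1}_\tau,\mu^k_\tau)$; subtracting $(1-t)\Phi_\tau(\mu^{k-1}_\tau,\mu^k_\tau)$, dividing by $t>0$, and letting $t\downarrow0$ yields
\[
  \Phi_\tau(\mu^{k-1}_\tau,\mu^k_\tau)\le \Phi_\tau(\mu^{k-1}_\tau,\nu)-\frac1{2\tau}W_2^2(\mu^k_\tau,\nu).
\]
Expanding both occurrences of $\Phi_\tau$ and rearranging is exactly \eqref{eq:perconvPPA}. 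Claim (iii) is then the special case $\nu=\mu^{k-1}_\tau$: the term $W_2^2(\mu^{k-1}_\tau,\nu)$ vanishes and $W_2^2(\mu^k_\tau,\nu)=W_2^2(\mu^k_\tau,\mu^{k-1}_\tau)$, giving \eqref{eq:60}, and since $W_2^2\ge0$ the sequence $k\mapsto\phi(\mu^k_\tau)$ is nonincreasing.

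The genuine mathematical content is the $\tfrac1\tau$-convexity of the squared distance along generalized geodesics — the Wasserstein-geometric ingredient already packaged in the Remark; granting it, everything reduces to soft convex-analytic bookkeeping. The one point that needs care is the very first iterate, where the base $\mu_0$ may lie in $\overline{D(\phi)}\setminus D(\phi)$: there the convexity inequality has to be applied with a reference outside $D(\phi)$, which is legitimate because the distance term is $\tfrac1\tau$-convex for an arbitrary base while the $\phi$-part only involves the endpoints $\mu^1_\tau,\nu\in D(\phi)$; moreover in \eqref{eq:60} for $k=1$ the case $\phi(\mu_0)=+\infty$ is trivially satisfied. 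This is precisely the bookkeeping carried out in \cite[Theorem 4.1.3]{AGS08}.
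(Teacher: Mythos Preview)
Your proposal is correct and is precisely the argument of \cite[Theorem 4.1.2--4.1.3]{AGS08} that the paper defers to: existence via the direct method (here Corollary \ref{cor:PPA}), uniqueness and the discrete variational inequality \eqref{eq:perconvPPA} from the $\tfrac1\tau$-strong convexity of $\Phi_\tau(\mu^{k-1}_\tau,\cdot)$ along generalized geodesics based at $\mu^{k-1}_\tau$, and \eqref{eq:60} by specializing $\nu=\mu^{k-1}_\tau$. Your remark on the first step when $\mu_0\in\overline{D(\phi)}\setminus D(\phi)$ is the right caveat; note that Definition \ref{def:cgen} as stated in the paper only postulates the convexity for reference measures in $D(\phi)$, whereas the argument (and \cite[Assumption 4.0.1]{AGS08}) actually requires it for an arbitrary reference---this is a harmless mismatch in the paper's formulation rather than a flaw in your reasoning.
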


%
\begin{thm}[Convergence to a minimum]\label{thm:convergencetoamin} Let
  $\phi:\cP_2(\H)\to(-\infty,+\infty]$ be
  proper, lower semicontinuous and convex along generalized geodesics
  and let $(\mu^k_\tau)_{k\in \N}$ be a solution to the PPA algorithm
  \eqref{eq:PPAa}.
  Then $\argmin\phi\neq \emptyset$ if and only if
  $(\mu^k_\tau)_{k\in \N}$ is bounded in $\cP_2(\H)$.
  If this is the case,
  there exists the limit $\mu:=\lim_{k\to\infty}\mu^k_\tau$
  in $\cPtHw$ and $\mu\in \argmin \phi$.
  \end{thm}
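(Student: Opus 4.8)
The plan is to follow the very same strategy used for the gradient flow in Theorem~\ref{thm:centrale2}, replacing the \eqref{eq:34} integral estimate by the discrete estimates of Theorem~\ref{thm:perPPA} and reducing the convergence to an application of the Opial-type Lemma~\ref{le:core} with $\mathfrak T=\N$ and $\mu(k):=\mu^k_\tau$.

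First I would dispose of the easy implication and, at the same time, record the monotonicity needed at the end. If $\nu\in\argmin\phi$ then $\phi(\nu)\le\phi(\mu^k_\tau)$ for every $k$, so the right-hand side of \eqref{eq:perconvPPA} is non-positive (the term $-\tfrac1{2\tau}W_2^2(\mu^k_\tau,\mu^{k-1}_\tau)$ is also non-positive); hence $W_2^2(\mu^k_\tau,\nu)\le W_2^2(\mu^{k-1}_\tau,\nu)$ and $k\mapsto W_2(\mu^k_\tau,\nu)$ is non-increasing. In particular $(\mu^k_\tau)_{k\in\N}$ is bounded in $\cP_2(\H)$, which proves that $\argmin\phi\neq\emptyset$ forces boundedness.

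For the converse I would assume $(\mu^k_\tau)$ bounded and show that every limit point along a diverging subsequence in $\cPtHw$ is a minimizer of $\phi$ (this simultaneously yields $\argmin\phi\neq\emptyset$). Summing \eqref{eq:perconvPPA} over $k=1,\dots,K$, the left-hand side telescopes and the nonpositive last terms may be discarded, so after dividing by $K$ one gets
\[
\frac{1}{2\tau K}W_2^2(\mu^K_\tau,\nu)-\frac{1}{2\tau K}W_2^2(\mu^0_\tau,\nu)
\le \phi(\nu)-\frac1K\sum_{k=1}^K\phi(\mu^k_\tau).
\]
Since $k\mapsto\phi(\mu^k_\tau)$ is non-increasing by Theorem~\ref{thm:perPPA}(iii), the Cesàro average bounds $\phi(\mu^K_\tau)$ from above, and boundedness makes both $\tfrac{1}{2\tau K}W_2^2(\mu^K_\tau,\nu)$ and $\tfrac{1}{2\tau K}W_2^2(\mu^0_\tau,\nu)$ vanish as $K\to\infty$. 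Passing to $\limsup$ I obtain $\limsup_{K\to\infty}\phi(\mu^K_\tau)\le\phi(\nu)$ for every $\nu\in D(\phi)$. If $\mu$ is the $\cPtHw$-limit of a subsequence $(\mu^{k_n}_\tau)$, the weak lower semicontinuity of $\phi$ from Theorem~\ref{thm:main-wlsc} gives $\phi(\mu)\le\liminf_n\phi(\mu^{k_n}_\tau)\le\limsup_k\phi(\mu^k_\tau)\le\phi(\nu)$ for all $\nu$, so $\mu\in\argmin\phi$.

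Finally I would invoke Lemma~\ref{le:core} with $\mathfrak T=\N$. Boundedness guarantees, via Proposition~\ref{prop:finalmente}(b), that the iterates lie in a compact metrizable subset of $\cPtHw$, so the set $M$ of limit points is nonempty; by the previous step $M\subset\argmin\phi$, and for each $\nu\in M$ the monotonicity of $k\mapsto W_2(\mu^k_\tau,\nu)$ established above is precisely hypothesis \eqref{eq:40}. Lemma~\ref{le:core} then produces the limit $\mu:=\lim_{k\to\infty}\mu^k_\tau$ in $\cPtHw$, with $\mu\in M\subset\argmin\phi$. I do not expect a serious obstacle here: the two genuinely delicate ingredients, namely the weak lower semicontinuity of the geodesically convex functional (Theorem~\ref{thm:main-wlsc}) and the Opial inequality packaged inside Lemma~\ref{le:core}, have already been established, and once they are available the argument is a direct discrete transcription of the gradient-flow case, with summation and Cesàro averaging replacing the time integration.
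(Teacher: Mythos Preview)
Your proposal is correct and follows essentially the same approach as the paper's proof: the same use of \eqref{eq:perconvPPA} to get monotonicity of $k\mapsto W_2(\mu^k_\tau,\nu)$ for minimizers, the same telescoping/averaging argument combined with the monotonicity of $k\mapsto\phi(\mu^k_\tau)$ and Theorem~\ref{thm:main-wlsc} to show that every weak limit point minimizes $\phi$, and the same concluding appeal to Lemma~\ref{le:core}.
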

  \begin{proof}
    If
    $\nu\in\argmin \phi$, (\ref{eq:perconvPPA}) yields
\[\frac{1}{2\tau}W_2^2(\mu_\tau^k,\nu)-\frac{1}{2\tau}W_2^2(\mu_\tau^{k-1},\nu)\leq
  0
\quad\text{for every }k\ge1\]
so that
\begin{equation}
  \label{eq:62}
  \text{the sequence}\quad
  k\mapsto W_2(\mu_\tau^k,\nu)\quad\text{is decreasing};
\end{equation}
in particular the set $\mathcal K:=\{\mu^k_\tau:k\in \N\}$
is bounded.

Conversely, if $\mathcal K$ is bounded and
$\mu$ is the weak limit of a subsequence $\mu^{k(n)}_\tau$
in $\cPtHw$ as $n\to\infty$, we want to prove
that $\mu\in \argmin\phi$.

Notice that since $k\mapsto \phi(\mu^k_\tau)$ is not increasing
and $\phi$ is sequentially lower semicontinuous in $\cPtHw$
we have $\mu\in D(\phi)$.

By summing both sides of (\ref{eq:perconvPPA}) from $1$ to $K$
and dividing by $K$, we obtain for every $\nu\in D(\phi)$ 
\[\frac{1}{2\tau}\frac{1}{K}\sum_{k=1}^{K}\big(W_2^2(\mu_\tau^k,\nu)-W_2^2(\mu_\tau^{k-1},\nu)\big)\leq
  \phi(\nu)-\frac{1}{K}\sum_{k=1}^K\phi(\mu_\tau^k)
\]
and therefore
\[\frac{1}{K}\sum_{k=1}^K\phi(\mu_\tau^k)+\frac{1}{2\tau}\frac{1}{K}W_2^2(\mu_\tau^K,\nu)\leq \phi(\nu) + \frac{1}{2\tau}\frac{1}{K} W_2^2(\mu_\tau^{0},\nu).\]
Since $k\to\phi(\mu_\tau^k)$ is not increasing by
Theorem \ref{thm:perPPA}(iii), we have
\[\phi(\mu_\tau^K)+\frac{1}{2\tau}\frac{1}{K}W_2^2(\mu_\tau^K,\nu)\leq \phi(\nu) + \frac{1}{2\tau}\frac{1}{K} W_2^2(\mu_\tau^{0},\nu).\]
Taking the $\limsup$ of this inequality
as $K\to\infty$ and using
the fact that $K\mapsto W_2(\mu_\tau^K,\nu)$ is bounded
and $\phi$ is sequentially lower semicontinuous w.r.t.~$\cPtHw$ convergence,
we obtain
\[\phi(\mu)\leq\liminf_{n\to\infty}\phi(\mu_\tau^{k(n)})
\leq\limsup_{K\to\infty}\phi(\mu_\tau^{K})\leq \phi(\nu)
\quad\text{for every }\nu\in D(\phi)\]
so $\mu\in\argmin\phi$.

The above argument shows that
the set $M$ of the limit points
of $(\mu^k_\tau)_{k\in \N}$
in $\cPtHw$ (defined as in \eqref{eq:27}
with $\mathfrak T:=\N$)
is included in $\argmin\phi$,
so that it satisfies condition \eqref{eq:40} thanks to \eqref{eq:62}.
We can eventually apply Lemma \ref{le:core} and
obtain the weak convergence of $(\mu^k_{\tau})_{k\in \N}$
in $\cPtHw$ as $k\to\infty$.
\end{proof}



\subsection{Fixed points of non-expansive and
  asymptotically regular maps}
\label{section:3.3}
We conclude this section by 
proving the weak convergence of
the iteration of a non-expansive and asymptotically regular map
$T:A\to A$ defined in a (weakly) closed subset $A$ of $\cP_2^w(\H)$.
The proof is a simple extension
to the Wasserstein setting of the original argument of Opial \cite{Opial67}.
\begin{defn} Let $A\subset\Pcal_2(\H)$; a map $T:A\to A$ is called \emph{non-expansive} if
\[W_2(T(\mu),T(\nu))\leq W_2(\mu,\nu)\qquad \text{for all $\mu,\nu\in
    A$}\]
$T$ is called \emph{asymptotically regular} if 
\[\lim_{k\to\infty}W_2(T^{k+1}(\mu),T^k(\mu))=0\quad\text{for every } \mu\in
  A.\]
\end{defn}
\begin{thm}
  Let $A$ be a (weakly) closed subset of $\Pcal_2^w(\H)$,
  let $T:A\to A$ be a non-expansive and asymptotically regular map,
  and let $\mu_k:=T^k(\mu)$, $k\in \N$, for some $\mu\in A$.
  
  Then $T$ has a fixed point
  if and only if $(\mu_k)$ is bounded in $\cP_2(\H)$;
  in this case it 
  converges in $\cP_2^w(\H)$ to a fixed point $\mu$ of $T$ as $k\to\infty$.
  \begin{proof}
    Let us denote by $\Fix(T)$ the set of fixed points
    of $T$.
    We first observe that 
    \begin{equation}
      \label{eq:18}
      \text{for every }\nu\in \Fix(T)\quad
      \text{the sequence }
      k\mapsto W_2(\mu_k,\nu)
      \quad\text{is not increasing}.
    \end{equation}
    In fact 
    \[W_2(\mu_{k+1},\nu)=W_2(T(\mu_{k}),T(\nu))\leq W_2(\mu_k,\nu)
      \qquad \text{for every }k\in\mathbb{N},\]
    since $T$ is non-expansive.
    In particular, if $\Fix(T)\neq\emptyset$ then
    the sequence $(\mu_k)_{k\in \N}$ is bounded.

    Let us now suppose that
    $\mathcal K:=\{\mu_k:k\in \N\}$ is bounded in $\cPtHw$
    and let us show that
    if $\mu$ is the weak limit of $\mu_{k(n)}$
    as $n\to\infty$ along an increasing subsequence $n\mapsto k(n)$,
    then $\mu\in \Fix(T)$.
    By Opial Lemma we have
    \begin{equation}
      \label{eq:56}
      W_2^2(\mu,T(\mu))+
      \liminf_{n\to\infty}W_2^2(\mu,\mu_n)
      \le
        \liminf_{n\to\infty}W_2^2(T(\mu),\mu_n).
    \end{equation}
    Since $\lim_{n\to\infty}W_2(\mu_n,T(\mu_n))=0$
    by the asymptotic regularity of $T$, we obtain
    \begin{displaymath}
      \liminf_{n\to\infty}W_2^2(T(\mu),\mu_n)=
      \liminf_{n\to\infty}W_2^2(T(\mu),T(\mu_n))\le
      \liminf_{n\to\infty}W_2^2(\mu,\mu_n).
    \end{displaymath}
    Combining this inequality with \eqref{eq:56} we obtain
    $W_2(\mu,T(\mu))=0$, i.e.~$\mu\in \Fix(T)$.

    Still assuming that $(\mu_k)_{k\in \N}$ is bounded,
    we have shown that the set $M$ of its limit points
    (defined as in \eqref{eq:27} with $\mathfrak T:=\N$)
    is included in $\Fix(T)$ and therefore it satisfies
    \eqref{eq:40} thanks to \eqref{eq:18}.
    An application of Lemma \ref{le:core} concludes the proof. 
\end{proof}
\end{thm}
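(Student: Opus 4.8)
The plan is to follow the same three-step template already used for the gradient flow (Theorem \ref{thm:centrale2}) and for the Proximal Point Algorithm (Theorem \ref{thm:convergencetoamin}): reduce the whole statement to Lemma \ref{le:core} by checking its hypothesis \eqref{eq:40}. Concretely, I need two ingredients. First, a monotonicity property of the distances $W_2(\mu_k,\nu)$ measured from fixed points $\nu$. Second, the identification that the set $M$ of weak limit points of $(\mu_k)$ along diverging sequences (defined as in \eqref{eq:27} with $\mathfrak T:=\N$) is contained in $\Fix(T)$. Once these are in place, \eqref{eq:40} follows and Lemma \ref{le:core} delivers convergence.

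The easy direction and the monotonicity come together. If $\nu\in\Fix(T)$, non-expansiveness gives
\[
  W_2(\mu_{k+1},\nu)=W_2(T(\mu_k),T(\nu))\le W_2(\mu_k,\nu),
\]
so $k\mapsto W_2(\mu_k,\nu)$ is non-increasing, which is exactly property \eqref{eq:18}; in particular, if $\Fix(T)\neq\emptyset$ then $(\mu_k)$ is bounded in $\cP_2(\H)$. This proves the ``only if'' implication and simultaneously furnishes the monotonicity needed for \eqref{eq:40}, provided $M\subset\Fix(T)$.

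The core of the argument, and the step I expect to be the main obstacle, is the converse: assuming $(\mu_k)$ bounded, show that any weak limit $\mu$ of a subsequence is a fixed point. Boundedness together with Corollary \ref{cor:finalmente}(c) produces a weakly convergent subsequence $\mu_{k_n}\to\mu$ in $\cPtHw$, and the weak closedness of $A$ ensures $\mu\in A$, so that $T(\mu)$ is well defined. I would then invoke the Opial property (Theorem \ref{lem:opialHilbert}) for $\mu_{k_n}\to\mu$ with the competitor $\nu:=T(\mu)$:
\[
  W_2^2(\mu,T(\mu))+\liminf_{n\to\infty}W_2^2(\mu_{k_n},\mu)\le\liminf_{n\to\infty}W_2^2(\mu_{k_n},T(\mu)).
\]
The decisive ingredient is asymptotic regularity, which gives $W_2(\mu_{k_n},T(\mu_{k_n}))\to0$ and therefore allows me to replace $\mu_{k_n}$ by $T(\mu_{k_n})$ in the right-hand side; combined with non-expansiveness $W_2(T(\mu),T(\mu_{k_n}))\le W_2(\mu,\mu_{k_n})$ this yields $\liminf_n W_2^2(\mu_{k_n},T(\mu))\le\liminf_n W_2^2(\mu_{k_n},\mu)$. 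Inserting this back into the Opial inequality forces $W_2^2(\mu,T(\mu))\le0$, i.e.\ $\mu\in\Fix(T)$. The delicate bookkeeping here is making sure the various $\liminf$'s are handled consistently — passing, if needed, to a further subsequence realizing the relevant limit — and that the replacement step genuinely uses asymptotic regularity in the form of a vanishing error in the squared distances.

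Having shown $M\subset\Fix(T)$, the monotonicity \eqref{eq:18} guarantees that hypothesis \eqref{eq:40} of Lemma \ref{le:core} holds with $\mathfrak T:=\N$. A direct application of that lemma then yields the existence of $\lim_{k\to\infty}\mu_k$ in $\cPtHw$, which necessarily lies in $M\subset\Fix(T)$, completing the proof.
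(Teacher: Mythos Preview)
Your proposal is correct and follows essentially the same route as the paper's proof: the monotonicity \eqref{eq:18} from non-expansiveness, the Opial inequality with competitor $T(\mu)$ combined with asymptotic regularity and non-expansiveness to force $\mu\in\Fix(T)$, and finally Lemma \ref{le:core}. Your explicit mention that weak closedness of $A$ is needed to ensure $\mu\in A$ (so that $T(\mu)$ makes sense) is a detail the paper leaves implicit but is indeed where that hypothesis is used.
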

When $\H$ has finite dimension we obviously get
narrow convergence in $\cP(\H)$.


\end{document}